\newcommand{\dd}{\mathrm{d}}
\newcommand{\NN}{\mathds{N}}
\newcommand{\RR}{\mathds{R}}
\newcommand{\fall}{\:\forall\:}
\newcommand{\ex}{\:\exists\:}
\newcommand{\abs}[1]{\left\lvert#1\right\rvert}
\newcommand{\mnorm}[1]{\left\lVert#1\right\rVert}
\newcommand{\setn}[1]{\left\{#1\right\}}
\newcommand{\setcond}[2]{\left\{#1 \:\middle\vert\: #2\right\}}
\newcommand{\defeq}{\mathrel{\mathop:}=}
\newcommand{\lr}[1]{\left(#1\right)}
\newcommand{\mc}[2]{B\!\left(#1,#2\right)}
\newcommand{\ms}[2]{S\!\left(#1,#2\right)}
\newcommand{\p}{\partial}
\newcommand{\strline}[2]{\left\langle#1,#2\right\rangle}
\newcommand{\clray}[2]{\left[#1,#2\right\rangle}
\newcommand{\skpr}[2]{\left\langle#1 \,\middle\vert\, #2\right\rangle}
\newcommand{\skprs}[2]{\langle#1 \,\big\vert\, #2\rangle}
\newcommand{\enquote}[1]{``#1''}
\newcommand{\norel}{\mathrel{\phantom{=}}}
\newcommand{\degr}{\ensuremath{^\circ}}
\newcommand{\mscLink}[1]{\href{http://www.ams.org/mathscinet/msc/msc2010.html?t=#1}{#1}}
\theoremstyle{plain}
\newtheorem{Satz}{Theorem}[section]
\newtheorem{Kor}[Satz]{Corollary}
\newtheorem{Lem}[Satz]{Lemma}
\newtheorem{Prop}[Satz]{Proposition}
\theoremstyle{definition}
\newtheorem{Def}[Satz]{Definition}
\newtheorem{Bsp}[Satz]{Example}
\newtheorem{Bem}[Satz]{Remark}
\theoremstyle{remark}
\newtheorem*{myproof}{Proof of \hyperref[result:birkhoff-best-approximation]{Proposition~\ref*{result:birkhoff-best-approximation}}}
\DeclareMathOperator{\lin}{lin}
\DeclareMathOperator{\bisec}{bsc}
\DeclareMathOperator{\hfl}{hfl}
\let \eps \varepsilon
\let \piup \uppi
\let \subset \subseteq
\renewcommand*{\eqref}[1]{%
  \hyperref[{#1}]{\textup{\tagform@{\ref*{#1}}}}%
}
\begin{document}
\parindent 0pt
\title{Orthogonality in Generalized Minkowski Spaces}
\author{Thomas Jahn\\
{\small Faculty of Mathematics, Technische Universit\"at Chemnitz}\\
{\small 09107 Chemnitz, Germany}\\
{\small thomas.jahn\raisebox{-1.5pt}{@}mathematik.tu-chemnitz.de}
}
\date{}
\maketitle

\begin{abstract}
We combine functional analytic and geometric viewpoints on approximate Birkhoff and isosceles orthogonality in generalized Minkowski spaces which are finite-dimensional vector spaces equipped with a gauge. This is the first approach to orthogonality types in such spaces.
\end{abstract}

\textbf{Keywords:} best approximation, bisector, Birkhoff orthogonality, gauge, generalized Minkowski space, isosceles orthogonality

\textbf{MSC(2010):} \mscLink{46B20}, \mscLink{52A20}, \mscLink{52A21}, \mscLink{52A41}, \mscLink{90C25}

\section{Introduction}
In recent literature, attention was attracted to the investigation of geometric properties of \emph{generalized Minkowski spaces} defined by so-called \emph{gauges}. In finite dimensions, these are positively homogeneous and subadditive functions $\gamma:\RR^d\to\RR$ which have non-negative values and vanish only at the origin $0\in\RR^d$. Obviously, this notion is a generalization of that of a norm because only the homogeneity property has been relaxed. The good news is that many of the concepts of classical functional analysis of finite-dimensional normed spaces still work in generalized Minkowski spaces $(\RR^d,\gamma)$, see \cite{Cobzas2013}. A convenient way for understanding such spaces is to exploit the correspondence between analysis of gauges and the geometry of their unit balls. Examples for this interplay can be found in \cite{JahnKuMaRi2015} and \cite{JahnMaRi2016}. The purpose of the present paper is to add another subject to this list, namely that of orthogonality.

Motivated by the pleasant theory of Hilbert spaces, mathematicians introduced various generalizations of the notion of orthogonality for non-Hilbert spaces. Birkhoff orthogonality is the most popular one, and its usage in the setting of normed spaces reaches from angular measures \cite{ChenLiLu2011}, approximation theory \cite{Hetzelt1985}, curve theory \cite{MartiniWu2012,ShonodaOm2014}, orthocentric systems \cite{PachecoSo2015}, matrix theory \cite{BenitezFeSo2007,BhattacharyyaGr2013,Sain2017,GhoshPaSa2016a,GhoshPaSa2016b,HaitPaSa2015,PaulSa2013a,PaulSa2013b,MalPaSa2017b}, and orthogonal decompositions of Banach spaces \cite{Alber2005,JhaPaSa2013,BlagojevicKa2013} to random processes \cite[Section~2.9]{SamorodnitskyTa1994}. Although it is named after Birkhoff \cite{Birkhoff1935}, there are earlier papers on this orthogonality type by Radon \cite{Radon1916} and Blaschke \cite{Blaschke1916}. Properties of isosceles orthogonality, a notion introduced by James \cite{James1945}, are determined by the geometry of bisectors. Being a decisive tool for creating Voronoi diagrams, these sets of points of equal distance from given objects have a great impact on computational geometry and discrete geometry; see, for instance, \cite{DezaSi2015} for recent research in this direction.

In order to take the next step and transfer these concepts to generalized Minkowski spaces, we introduce binary relations $\perp$ on $\RR^d$ via proximality with respect to certain convex functions. Therefore, we fix our notation and outline necessary concepts from convex analysis and convex optimization in the second section. In the classical theory of normed spaces, the presence of the following properties indicates \enquote{how much} a given orthogonality notion resembles usual Euclidean orthogonality:
\begin{enumerate}[label={(\alph*)},leftmargin=*,align=left,noitemsep]
\item{Nondegeneracy: For all $x\in\RR^d$ and $\lambda,\mu\in\RR$, $\lambda x\perp\mu x$ if and only if $\lambda\mu x=0$.}
\item{Symmetry: For all $x,y\in\RR^d$, $x\perp y$ implies $y\perp x$.}
\item{Right additivity: For all $x,y,z\in\RR^d$, $x\perp y$ and $x\perp z$ together imply $x\perp (y+z)$.}
\item{Left additivity: For all $x,y,z\in\RR^d$, $x\perp z$ and $y\perp z$ together imply $(x+y)\perp z$.}
\item{Right homogeneity: For all $x,y\in\RR^d$ and $\lambda>0$, $x\perp y$ implies $x\perp\lambda y$.}
\item{Left homogeneity: For all $x,y\in\RR^d$ and $\lambda>0$, $x\perp y$ implies $\lambda x\perp y$.}
\item{Right existence: For all $x,y\in\RR^d$, there exists a number $\alpha\in\RR$ such that $x\perp(\alpha x+y)$.}
\item{Left existence: For all $x,y\in\RR^d$, there exists a number $\alpha\in\RR$ such that $(\alpha x+y)\perp x$.}
\end{enumerate}
Checking these properties for approximate Birkhoff orthogonality and isosceles orthogonality in generalized Minkowski spaces is (sometimes more, sometimes less explicitly) the subject of \hyperref[chap:birkhoff]{Sections~\ref*{chap:birkhoff}} and \ref{chap:isosceles}. We conclude the presentation by giving some future research perspectives in the fifth and final section.

\section{Notation and preliminaries}
Throughout the paper, we are concerned with the $d$-dimensional real vector space $\RR^d$. This space is equipped with the usual topology induced by the Euclidean inner product $\skpr{\cdot}{\cdot}$ and the Euclidean norm. We shall use the notions of \emph{interior}, \emph{closure}, and \emph{boundary} accordingly. We abbreviate the \emph{linear hull} of a set $A\subset\RR^d$ by $\lin A$. The \emph{straight line} passing through $x,y\in\RR^d$ and \emph{ray} starting at $x\in\RR^d$ and passing through $y\in\RR^d$ are denoted by $\strline{x}{y}=\setcond{x+\lambda(y-x)}{\lambda\in\RR}$ and $\clray{x}{y}=\setcond{x+\lambda(y-x)}{\lambda\geq 0}$, respectively. A \emph{gauge} is a function $\gamma:\RR^d\to\RR$ which meets the following requirements:
\begin{enumerate}[label={(\alph*)},leftmargin=*,align=left,noitemsep]
\item{$\gamma(x)\geq 0$ for all $x\in\RR^d$, and $\gamma(x)=0$ implies $x=0$,}
\item{$\gamma(\lambda x)=\lambda\gamma(x)$ for all $x\in\RR^d$, $\lambda>0$,}
\item{$\gamma(x+y)\leq\gamma(x)+\gamma(y)$.}
\end{enumerate}
In particular, $\gamma$ is called \emph{rotund} if $\gamma(x+y)<2$ whenever $x,y\in\RR^d$, $x\neq y$, and $\gamma(x)=\gamma(y)=1$. It is called \emph{G{\^{a}}teaux differentiable} at $x\in\RR^d$, if the directional derivative
\begin{equation*}
\gamma^\prime(x;y)=\lim_{\lambda\downarrow 0}\frac{\gamma(x+\lambda y)-\gamma(x)}{\lambda}
\end{equation*}
is linear in $y$.  The \emph{polar function} of $\gamma$ is defined as
\begin{equation*}
\gamma^\circ:\RR^d\to\RR,\qquad\gamma^\circ(x^\ast)=\inf\setcond{\lambda>0}{\skpr{x^\ast}{x}\leq\lambda\gamma(x)\fall x\in\RR^d}.
\end{equation*}
The polar function $\gamma^\circ$ is again a gauge and satisfies the Cauchy--Schwarz inequality
\begin{equation}
\skpr{x^\ast}{x}\leq\gamma^\circ(x^\ast)\gamma(x)\label{eq:cauchy-schwarz}
\end{equation}
automatically. 

A set $B\subset\RR^d$ is convex if the line segment $[x,y]\defeq\setcond{\lambda x+(1-\lambda)y}{0\leq\lambda\leq 1}$ is contained in $B$ for all $x,y\in B$.
A convex set $B$ is \emph{rotund} if there is no line segment contained in its boundary.
For $x^\ast\in\RR^d\setminus\setn{0}$ and $\alpha\in\RR$, the hyperplane $\setcond{y\in\RR^d}{\skpr{x^\ast}{y}=\alpha}$ is a \emph{supporting hyperplane} of $B$ provided $B$ is contained in the closed half-space $\setcond{y\in\RR^d}{\skpr{x^\ast}{y}\leq\alpha}$ but not in its interior.
The set $B$ is called \emph{smooth} if, for every boundary point $x$ of $B$, there is a unique supporting hyperplane of $B$ passing through $x$.
An \emph{affine diameter} of $B$ is a line segment $[x,y]$ joining two boundary points of $B$ (that is, a \emph{chord} of $B$) which admit distinct parallel supporting hyperplanes passing through them.
The \emph{support function} of $B$ is $h_B:\RR^d\to\RR\cup\setn{+\infty,-\infty}$, $h_B(x^\ast)=\sup\setcond{\skpr{x^\ast}{x}}{x\in B}$, and the set $B^\circ\defeq\setcond{x^\ast\in\RR^d}{h_{B}(x^\ast)\leq 1}$ is the \emph{polar set} of $B$.
A set $K\subset\RR^d$ is said to be a \emph{cone} provided $\lambda K=K$ for all $\lambda>0$.

The formulas $\gamma(x)=\inf\setcond{\lambda>0}{x\in\lambda B}$ and $B=\setcond{x\in\RR^d}{\gamma(x)\leq 1}$ establish a one-to-one correspondence between gauges and compact and convex sets having the origin as interior point.
Analogously to the classical theory of normed spaces, we define the \emph{ball with radius $\lambda$ and center $x$} by $\mc{x}{\lambda}=\setcond{y\in\RR^d}{\gamma(y-x)\leq\lambda}$. In particular, $\mc{0}{1}$ is the \emph{unit ball} of the generalized Minkowski space $(\RR^d,\gamma)$.
The intimate relationship between gauges and unit balls goes beyond this: We will see in \hyperref[result:smoothness]{Theorems~\ref*{result:smoothness}} and \ref{result:rotundity} that rotund gauges have rotund unit balls, G{\^{a}}teaux differentiable gauges have smooth unit balls, and vice versa.
Polarity also behaves well in the interplay between gauges and unit balls.
Namely, the polar gauge $\gamma^\circ$ coincides with the support function $h_{\mc{0}{1}}$, and its unit ball is the polar set of $\mc{0}{1}$. Therefore, the polar gauge of a norm is again a norm (known as the dual norm). 

It is known that convex functions $f:\RR^d\to\RR$---that is, $f(\lambda x+(1-\lambda)y)\leq\lambda f(x)+(1-\lambda)f(y)$ for all $x,y\in\RR^d$ and $\lambda\in [0,1]$---are continuous and their \emph{$\eps$-subdifferential}
\begin{equation*}
\p_\eps f(x)\defeq\setcond{x^\ast\in\RR^d}{\skpr{x^\ast}{y-x}\leq f(y)-f(x)+\eps\text{ for all }y\in\RR^d}
\end{equation*}
is non-empty, compact, and convex at each point $x\in\RR^d$, see \cite[Corollary~8.30]{BauschkeCo2011} and \cite[Theorem~2.4.2(i)]{Zalinescu2002}. The $\eps$-directional derivative
\begin{equation*}
f^\prime_\eps(x;y)\defeq \inf_{\lambda> 0}\frac{f(x+\lambda y)-f(x)+\eps}{\lambda}
\end{equation*}
coincides with the support function of $\p_\eps f(x)$, that is,
\begin{equation}
f^\prime_\eps(x;y)=\sup\setcond{\skpr{x^\ast}{y}}{x^\ast\in\p_\eps f(x)}\label{eq:directional-derivative-subdifferential-sup}
\end{equation}
for all $x,y\in\RR^d$, see \cite[Theorem~2.1.14, Theorem~2.4.9]{Zalinescu2002}. Substituting $-y$ for $y$ yields 
\begin{equation}
-f^\prime_\eps(x;-y)=\inf\setcond{\skpr{x^\ast}{y}}{x^\ast\in\p_\eps f(x)}.\label{eq:directional-derivative-subdifferential-inf}
\end{equation}
By compactness of $\p_\eps f(x)$ and continuity of $\skpr{x^\ast}{\cdot}$, the supremum in \eqref{eq:directional-derivative-subdifferential-sup} and the infimum in \eqref{eq:directional-derivative-subdifferential-inf} are attained as a maximum and a minimum respectively. Thus
\begin{equation*}
\setcond{\skpr{x^\ast}{y}}{x^\ast\in\p_\eps f(x)}=[-f^\prime_\eps(x;-y),f^\prime_\eps(x;y)],
\end{equation*}
see also \cite[Proposition~I.2.5]{Cioranescu1990} and \cite[Equation~(9)]{Precupanu2013}.

We shall omit $\eps$ from the notation if it equals zero, that is $f^\prime\defeq f^\prime_0$ and $\p f\defeq \p_0 f$. Note that every \emph{subgradient} $x^\ast\in\p f(x)$ defines a supporting hyperplane of the \emph{sublevel set} $\setcond{y\in\RR^d}{f(y)\leq f(x)}$ at $x$. For the special case $f=\gamma$, this fact is the basis for the concept of \emph{Birkhoff orthogonality}, which we will discuss in \hyperref[chap:birkhoff]{Section~\ref*{chap:birkhoff}}.

\begin{Bsp}
We have
\begin{equation}
\p_\eps\gamma(x)=\begin{cases}\setcond{x^\ast\in\RR^d}{\gamma^\circ(x^\ast)\leq 1}=\mc{0}{1}^\circ,&x=0,\\\setcond{x^\ast\in\RR^d}{\skpr{x^\ast}{x}\geq\gamma(x)-\eps,\gamma^\circ(x^\ast)\leq 1},&\text{else,}\end{cases}\label{eq:subdifferential-gauge}
\end{equation}
see \cite[Theorem~2.4.2(ii)]{Zalinescu2002}.
\end{Bsp}

The first lemma serves as an $\eps$-version of Fermat's rule and can be proved analogously to its classical counterpart \cite[Theorem~16.2, Proposition~17.17]{BauschkeCo2011}. Half of it is stated, e.g., in \cite[Theorem~XI.1.1.5]{HiriartUrrutyLe1993b}.
\begin{Lem}\label{result:fermat-rule}
Let $f:\RR^d\to\RR$ be a convex function and let $x\in\RR^d$, $\eps\geq 0$. The following statements are equivalent:

\begin{enumerate}[label={(\alph*)},leftmargin=*,align=left,noitemsep]
\item{$f(x)\leq f(y)+\eps$ for all $y\in\RR^d$,\label{eps-minimizer}}
\item{$0\in\p_\eps f(x)$\label{eps-subdifferential-contains-zero}}
\item{$f^\prime_\eps(x;y)\geq 0$ for all $y\in\RR^d$.\label{eps-directional-derivative-is-nonnegative}}
\end{enumerate}
\end{Lem}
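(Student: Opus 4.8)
The plan is to treat the three conditions as three ways of saying that one scalar quantity is nonnegative, and to connect them by a short cycle. The representation \eqref{eq:directional-derivative-subdifferential-sup} of the $\eps$-directional derivative as a support function is available but, as it turns out, only needed for the optional direct route between (b) and (c).

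First I would settle (a)$\Leftrightarrow$(b) by simply reading off the definition of the $\eps$-subdifferential: substituting the particular functional $x^\ast=0$ into $\p_\eps f(x)=\setcond{x^\ast\in\RR^d}{\skpr{x^\ast}{y-x}\leq f(y)-f(x)+\eps \fall y\in\RR^d}$ turns $0\in\p_\eps f(x)$ into the statement $0\leq f(y)-f(x)+\eps$ for all $y$, which is precisely (a). No appeal to non-emptiness of $\p_\eps f(x)$ is required for this step.

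Next, (a)$\Rightarrow$(c): assuming $f(x)\leq f(y)+\eps$ for every $y$, fix a direction $y\in\RR^d$; then $f(x+\lambda y)-f(x)+\eps\geq 0$ for each $\lambda>0$, and dividing by $\lambda>0$ and passing to the infimum over $\lambda>0$ gives $f^\prime_\eps(x;y)\geq 0$. For the converse (c)$\Rightarrow$(a), assume $f^\prime_\eps(x;y)\geq 0$ for all $y$; given $z\in\RR^d$, apply this with $y=z-x$ and note that if the infimum defining $f^\prime_\eps(x;z-x)$ is nonnegative, so is each of its terms, in particular the one for $\lambda=1$, which reads $f(z)-f(x)+\eps\geq 0$. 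Since $z$ is arbitrary, (a) follows, closing the cycle. As an alternative linking (b) and (c) directly, one observes via \eqref{eq:directional-derivative-subdifferential-sup} that $f^\prime_\eps(x;\cdot)$ is the support function of the non-empty compact convex set $\p_\eps f(x)$, so $0\in\p_\eps f(x)$ forces $f^\prime_\eps(x;y)\geq\skpr{0}{y}=0$, while if $0\notin\p_\eps f(x)$ then strictly separating the point $0$ from the compact convex set $\p_\eps f(x)$ yields a direction $y$ with $f^\prime_\eps(x;y)=\sup\setcond{\skpr{x^\ast}{y}}{x^\ast\in\p_\eps f(x)}<0$, contradicting (c).

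I do not expect a genuine obstacle here, which is why the lemma can be proved in parallel to the classical Fermat rule. The only points that call for a moment's care are the elementary observation that an infimum over $\lambda>0$ is nonnegative exactly when every difference quotient is (used in both directions between (a) and (c)), and, should one take the functional-analytic route, the correct invocation of strict separation of a point from a compact convex set together with the support-function identity \eqref{eq:directional-derivative-subdifferential-sup}.
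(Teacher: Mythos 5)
Your proof is correct and follows essentially the same route as the paper: (a) and (b) are identified as literally the same inequality by plugging $x^\ast=0$ into the definition of $\p_\eps f(x)$, and the equivalence with (c) is obtained by dividing by $\lambda>0$ and taking the infimum in one direction, and by specializing the nonnegative infimum to the term $\lambda=1$ at the direction $y=z-x$ in the other. The separation-based alternative you sketch for (b)$\Leftrightarrow$(c) is a valid bonus but is not needed and does not appear in the paper.
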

\begin{proof}
\ref{eps-minimizer}$\Longleftrightarrow$\ref{eps-subdifferential-contains-zero}: Both statements are equivalent to $0\leq f(y)-f(x)+\eps$ for all $y\in\RR^d$.

\ref{eps-subdifferential-contains-zero}$\Longrightarrow$\ref{eps-directional-derivative-is-nonnegative}: We have 
\begin{align*}
0\leq f(y)-f(x)+\eps \fall y\in\RR^d&\Longrightarrow 0\leq f(x+ty)-f(x)+\eps \fall y\in\RR^d, t>0\\
&\Longrightarrow 0\leq \frac{f(x+ty)-f(x)+\eps}{t} \fall y\in\RR^d, t>0\\
&\Longrightarrow 0\leq f^\prime_\eps(x;y),
\end{align*}

\ref{eps-directional-derivative-is-nonnegative}$\Longrightarrow$\ref{eps-subdifferential-contains-zero}: We have
\begin{align*}
0\leq f^\prime_\eps(x;y)\fall y\in\RR^d&\Longrightarrow 0\leq f^\prime_\eps(x;y-x)\fall y\in\RR^d\\
&\Longrightarrow 0\leq \frac{f(x+t(y-x))-f(x)+\eps}{t}\fall y\in\RR^d, t>0\\ 
&\Longrightarrow 0\leq f(y)-f(x)+\eps \fall y\in\RR^d.
\end{align*}
\end{proof}

For $\eps=0$, \hyperref[result:fermat-rule]{Lemma~\ref*{result:fermat-rule}} gives a characterization of minimizers of a convex function in terms of the subdifferential. The existence of a minimizer of a convex function can be guaranteed by the additional assumption of coercitivity, see \cite[Theorem~11.9]{BauschkeCo2011}. A function $f:\RR^d\to\RR$ is called \emph{coercive} if all of its sublevel sets $\setcond{x\in\RR^d}{f(x)\leq\alpha}$ are bounded.

\section{Approximate Birkhoff orthogonality}\label{chap:birkhoff}
Approximate orthogonality relations are usually defined by introducing a relaxation parameter $\eps$. For Birkhoff orthogonality in normed spaces, there is more than one approach to an approximate version. Some of them are connected to semi-inner products \cite{Chmielinski2005b,Dragomir1991,MalPaSa2017a,ChmielinskiStWo2017} (see \cite{KaramanlisPs2013,MoslehianZa2015b} for applications), but we will follow another approach \cite{HaddadiMaSh2009,HasaniMaVa2007} which works well with $\eps$-subdifferentials and $\eps$-best approximations.
\begin{Def}\label{definition:orthogonality}
The vector $x\in\RR^d$ is called \emph{$\eps$-Birkhoff orthogonal to $y\in\RR^d$} (abbreviated by $x\perp_B^\eps y$) if $\gamma(x)\leq\gamma(x+\lambda y)+\eps$ for all $\lambda\in\RR$. If $\eps=0$, we shall omit $\eps$ from the notation and simply refer to \emph{Birkhoff orthogonality}.
\end{Def}
Trivially, the nondegeneracy property and the following homogeneity property are true for approximate Birkhoff orthogonality: For every $x,y\in\RR^d$, $\lambda>0$, and $\mu\in\RR$, we have that $x\perp_B^\eps y$ implies $\lambda x\perp_B^{\lambda\eps}\mu y$. The remainder of this section is subdivided into three parts which address existence properties of $\eps$-Birkhoff orthogonality, additivity of $0$-Birkhoff orthogonality, and symmetry of $\eps$-Birkhoff orthogonality, respectively.
\subsection{Dual characterizations}\label{chap:dual-characterizations}
Since various numerical methods for solving convex optimization problems can be derived from inclusion problems like $0\in\p f(x)$, rephrasing optimality (in the sense of minimizing a certain convex function $f$) via \hyperref[result:fermat-rule]{Lemma~\ref*{result:fermat-rule}} is central to convex optimization.
In the classical convex analysis of real normed spaces $(X,\mnorm{\cdot})$, subdifferentials are subsets of the topological dual space $(X^\ast,\mnorm{\cdot}_\ast)$ whose elements are continuous linear functionals mapping $X$ to the real numbers.
Thus, Fermat's rule provides a dual characterization of optimal solutions of convex optimization problems.

Another family of set-valued operators which are of interest for giving analytical descriptions of the geometry of normed spaces is given by the so-called \emph{duality mappings} $J_\phi:X\rightrightarrows X^\ast$, see, e.g., \cite{Browder1965}, \cite[Chapters~I,~II]{Cioranescu1990}, or \cite[Section~3.7]{Zalinescu2002}.

In the following, we will show how $\eps$-Birkhoff orthogonality relates to convex optimization problems and certain proximality notions and give dual descriptions thereof. Some of our results require $\eps=0$. In such a case, their novelty compared to the existing literature lies in the usage of gauges instead of norms.
In this spirit, we start by introducing duality mappings in generalized Minkowski spaces $(\RR^d,\gamma)$, see also \cite[Section~2.4.7]{Cobzas2013} for a related discussion in the context of asymmetric moduli of rotundity and smoothness.

\begin{Def}\label{definition:duality-mapping}
Let $(\RR^d,\gamma)$ be a generalized Minkowski space, and let $\phi:[0,+\infty)\to [0,+\infty)$ be \emph{weight}, i.e., a continuous, non-decreasing, and non-negative function. The \emph{duality mapping with weight $\phi$} is the set-valued operator $J_\phi\gamma:\RR^d\rightrightarrows\RR^d$,
\begin{equation*}
J_\phi\gamma(x)=\setcond{x^\ast\in\RR^d}{\skpr{x^\ast}{x}=\gamma^\circ(x^\ast)\gamma(x),\gamma^\circ(x^\ast)=\phi(\gamma(x))}.
\end{equation*}
\end{Def}
\begin{Bem}
\begin{enumerate}[label={(\alph*)},leftmargin=*,align=left,noitemsep]
\item{Our notation alters the classical one (cf. \cite{Browder1965}) in order to emphasize the dependency on $\gamma$.}
\item{We do not distinguish between the primal space and its topological dual in our $\RR^d$ setting.}
\item{If $\gamma=\mnorm{\cdot}$ is a norm, then $\gamma^\circ=\mnorm{\cdot}_\ast$ is the dual norm. Therefore, \hyperref[definition:duality-mapping]{Definition~\ref*{definition:duality-mapping}} is an extension of the classical notion.}
\item{In the literature, the weight $\phi:[0,+\infty)\to [0,+\infty)$ is assumed to be strictly increasing and to meet the additional requirements $\phi(0)=0$ and $\lim_{t\to+\infty}\phi(t)=+\infty$. In this case, we have $J_\phi\gamma(0)=\setn{0}\neq\p\gamma(0)$ independently of $\phi$ and $\gamma$. In our definition, $J_\phi\gamma=\p\gamma$ when $\phi(t)=1$ for all $t>0$.}
\end{enumerate}
\end{Bem}
When $\phi:[0,+\infty)\to [0,+\infty)$ is the identity, Cior{\u{a}}nescu \cite[Chapters~I,~II]{Cioranescu1990} refers to $J_\phi(x)$ as the \emph{normalized duality mapping} at $x$. Peypouquet \cite[Section~1.1.2]{Peypouquet2015} uses this term for $\p\mnorm{\cdot}(x)$ at non-zero points $x\in\RR^d$. In any case, normalization is not important as duality mappings turn out to be rescalings of each other at non-zero points $x\in\RR^d$. The following result extends \cite[Theorem~I.4.4]{Cioranescu1990} to generalized Minkowski spaces.

\begin{Satz}\label{result:duality-mapping}
Let $\phi:[0,+\infty)\to [0,+\infty)$ be a weight, and let $\psi:[0,+\infty)\to [0,+\infty)$, $\psi(t)=\int_0^t\phi(s)\dd s$. Then $\psi$ is a non-decreasing convex function, $\psi\circ\gamma:\RR^d\to\RR$ is a convex function, and $J_\phi\gamma(x)=\p(\psi\circ\gamma)(x)$ for all $x\in\RR^d$.
\end{Satz}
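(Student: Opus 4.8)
The plan is to dispose of the elementary convexity claims first and then prove the set-identity $J_\phi\gamma(x)=\p(\psi\circ\gamma)(x)$ by two inclusions, the key tool being that the tangent line of $\psi$ at the value $\gamma(x)$ lies below the graph of $\psi$. For the preliminaries: non-negativity of $\phi$ gives $\psi(t)-\psi(s)=\int_s^t\phi\geq 0$ for $t\geq s$, so $\psi$ is non-decreasing; by the fundamental theorem of calculus $\psi\in C^1$ with $\psi^\prime=\phi$, and since $\phi$ is non-decreasing, $\psi$ is convex. Subadditivity together with positive homogeneity of $\gamma$ yields $\gamma(\lambda x+(1-\lambda)y)\leq\lambda\gamma(x)+(1-\lambda)\gamma(y)$ for $\lambda\in[0,1]$, i.e.\ $\gamma$ is convex, so $\psi\circ\gamma$ — a non-decreasing convex function composed with a convex function — is convex, and being real-valued it is continuous, so all statements about $\p(\psi\circ\gamma)$ make sense. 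It is convenient to extend $\psi$ to all of $\RR$ by $\psi(t)\defeq\phi(0)t$ for $t<0$, which preserves convexity and $\psi\in C^1$ and leaves $\psi\circ\gamma$ unchanged since $\gamma\geq 0$. We shall use repeatedly that for each $r\geq 0$ the affine map $t\mapsto\psi(r)+\phi(r)(t-r)$ is a minorant of $\psi$.

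For $J_\phi\gamma(x)\subseteq\p(\psi\circ\gamma)(x)$, let $x^\ast\in J_\phi\gamma(x)$ and put $r\defeq\gamma(x)$, so $\skpr{x^\ast}{x}=r\gamma^\circ(x^\ast)$ and $\gamma^\circ(x^\ast)=\phi(r)$. Then for any $y\in\RR^d$ the Cauchy--Schwarz inequality \eqref{eq:cauchy-schwarz} followed by the supporting-line estimate for $\psi$ at $r$ gives
\begin{equation*}
\skpr{x^\ast}{y-x}\leq\gamma^\circ(x^\ast)\gamma(y)-r\gamma^\circ(x^\ast)=\phi(r)\lr{\gamma(y)-r}\leq\psi(\gamma(y))-\psi(r)=(\psi\circ\gamma)(y)-(\psi\circ\gamma)(x),
\end{equation*}
so $x^\ast\in\p(\psi\circ\gamma)(x)$.

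For the reverse inclusion, let $x^\ast\in\p(\psi\circ\gamma)(x)$ and again write $r\defeq\gamma(x)$. Since $\gamma^\circ$ is the support function of the compact set $\mc{0}{1}$, there is $\bar y$ with $\gamma(\bar y)\leq r$ and $\skpr{x^\ast}{\bar y}=r\gamma^\circ(x^\ast)$; feeding $y=\bar y$ into the subgradient inequality and using $\psi(\gamma(\bar y))\leq\psi(r)$ yields $\skpr{x^\ast}{\bar y}\leq\skpr{x^\ast}{x}$, which together with \eqref{eq:cauchy-schwarz} forces $\skpr{x^\ast}{x}=\gamma(x)\gamma^\circ(x^\ast)$. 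Next, feeding in $y=sz$ for $s>0$ and $\gamma(z)\leq 1$, using $\psi(\gamma(sz))\leq\psi(s)$, and taking the supremum over such $z$ gives $(s-r)\gamma^\circ(x^\ast)\leq\psi(s)-\psi(r)$ for all $s>0$, i.e.\ $\gamma^\circ(x^\ast)$ is a subgradient of $\psi$ at $r$; for $x\neq 0$, where $r>0$, differentiability of $\psi$ then forces $\gamma^\circ(x^\ast)=\psi^\prime(r)=\phi(\gamma(x))$, so $x^\ast\in J_\phi\gamma(x)$.

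The delicate case is $x=0$, where the sphere argument degenerates: one treats it directly, observing that $0$ minimizes $\psi\circ\gamma$ (as $\psi$ is non-decreasing and $\gamma\geq 0$), determining $\p(\psi\circ\gamma)(0)$ from positive homogeneity of $\gamma$ and the asymptotics $\psi(t)/t\to\phi(0)$ as $t\downarrow 0$ (recall $\psi(t)\geq\phi(0)t$), and comparing the outcome with $J_\phi\gamma(0)$ via \eqref{eq:subdifferential-gauge} and \hyperref[result:fermat-rule]{Lemma~\ref*{result:fermat-rule}}. An equivalent shortcut for $x\neq 0$ is to invoke the subdifferential chain rule $\p(\psi\circ\gamma)(x)=\phi(\gamma(x))\,\p\gamma(x)$ for a $C^1$, non-decreasing, convex outer function and then identify $\phi(\gamma(x))\,\p\gamma(x)$ with $J_\phi\gamma(x)$ using \eqref{eq:subdifferential-gauge} and \eqref{eq:cauchy-schwarz}. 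I expect the main obstacle to be precisely this reconciliation at the origin, together with making rigorous the passage to suprema over the $\gamma$-unit sphere, which rests on compactness of $\mc{0}{1}$.
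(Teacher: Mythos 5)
Your argument for $x\neq 0$ is correct and genuinely different from the paper's. The paper disposes of the whole identity in one line: convexity of $\psi\circ\gamma$ via \cite[Theorem~5.1]{Rockafellar1972} and then the chain rule for subdifferentials \cite[Theorem~2.8.10]{Zalinescu2002}, giving $\p(\psi\circ\gamma)(x)=\p\psi(\gamma(x))\,\p\gamma(x)=\phi(\gamma(x))\,\p\gamma(x)=J_\phi\gamma(x)$ --- exactly the ``shortcut'' you mention at the end. Your two-inclusion proof replaces that black box by the supporting-line minorant of $\psi$ at $\gamma(x)$, the Cauchy--Schwarz inequality \eqref{eq:cauchy-schwarz}, and the attainment of $\gamma^\circ(x^\ast)=h_{\mc{0}{1}}(x^\ast)$ on the compact set $\mc{0}{1}$; all of these steps check out (the extension $\psi(t)=\phi(0)t$ for $t<0$ is convex and $C^1$ because $\phi$ is non-decreasing, and for $r=\gamma(x)>0$ the one-sided subgradient inequality $(s-r)\gamma^\circ(x^\ast)\leq\psi(s)-\psi(r)$ for all $s>0$ together with differentiability of $\psi$ at the interior point $r$ does force $\gamma^\circ(x^\ast)=\phi(r)$). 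What your route buys is self-containedness and, more importantly, visibility of exactly where the argument degenerates.

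Concerning the ``reconciliation at the origin'' that you defer: it cannot be carried out, because the stated identity is false at $x=0$ whenever $\phi(0)>0$. Your own computation shows it: the subgradient inequality at $0$ yields only $\gamma^\circ(x^\ast)\leq\inf_{s>0}\psi(s)/s=\phi(0)$, and conversely every $x^\ast$ with $\gamma^\circ(x^\ast)\leq\phi(0)$ satisfies $\skpr{x^\ast}{y}\leq\phi(0)\gamma(y)\leq\psi(\gamma(y))$, so $\p(\psi\circ\gamma)(0)=\setcond{x^\ast}{\gamma^\circ(x^\ast)\leq\phi(0)}=\phi(0)\,\mc{0}{1}^\circ$, a full ball, whereas \hyperref[definition:duality-mapping]{Definition~\ref*{definition:duality-mapping}} gives $J_\phi\gamma(0)=\setcond{x^\ast}{\gamma^\circ(x^\ast)=\phi(0)}$, only its boundary. (Concretely, for $\phi\equiv 1$ one has $\psi\circ\gamma=\gamma$ and $\p\gamma(0)=\mc{0}{1}^\circ$ by \eqref{eq:subdifferential-gauge}, but $J_\phi\gamma(0)$ is the dual unit sphere.) This is a defect of the theorem's statement rather than of your proof --- the paper's own chain-rule computation silently assumes $\p\psi(\gamma(x))=\setn{\psi^\prime(\gamma(x))}$, which fails at the boundary point $0$ of $\dom\psi$. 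The clean fix is to restrict the identity to $x\neq 0$ (or to assume $\phi(0)=0$); with that caveat made explicit, your proof is complete.
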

\begin{proof}
The convexity of $\psi$ is a consequence of the fundamental theorem of calculus, see \cite[Lemma~I.4.3]{Cioranescu1990}. Furthermore, $\psi$ is differentiable and its derivative is the non-negative function $\psi^\prime=\phi$ which means that $\psi$ is non-decreasing. By \cite[Theorem~5.1]{Rockafellar1972}, $\psi\circ\gamma$ is a convex function.
The chain rule for subdifferentials \cite[Theorem~2.8.10]{Zalinescu2002} yields 
\begin{align*}
\p(\psi\circ\gamma)(x)&=\setcond{\alpha x^\ast}{\alpha\in\p\psi(\gamma(x)),x^\ast\in\p\gamma(x)}\\
&=\psi^\prime(\gamma(x)\p\gamma(x)=J_\phi\gamma(x).\qedhere
\end{align*}
\end{proof}
An easy consequence is that  $\phi_2(\gamma(x))J_{\phi_1}(x)=\phi_1(\gamma(x))J_{\phi_2}(x)$ for all $x\in\RR^d$ and for all weights $\phi_1,\phi_2:[0,+\infty)\to [0,+\infty)$, see \cite[Proposition~I.4.7(f)]{Cioranescu1990}.

Necessary and sufficient conditions for $x\perp_B^\eps(\alpha x+y)$ can be given in terms of linear functionals, see \cite[Corollary~2.2]{James1947a} and \cite[Remark~15]{Dragomir2004} for the non-relaxed version in normed spaces. Note that given $x\in\RR^d$ and $\eps\geq\gamma(x)$, we have $x\perp_B^\eps y$ for all $y\in\RR^d$. Furthermore, \eqref{eq:subdifferential-gauge} gives $\p_\eps\gamma(0)=\mc{0}{1}^\circ$ independently of $\eps$. Therefore, the restriction to $0\leq\eps<\gamma(x)$ is justified when asking for statements which link $\eps$-subdifferentials to $\eps$-Birkhoff orthogonality.
\begin{Satz}\label{result:right-birkhoff-characterization}
Let $(\RR^d,\gamma)$ be a generalized Minkowski space. Furthermore, let $x,y\in\RR^d$, $\alpha\in\RR$, $0\leq\eps<\gamma(x)$, and define $h:\RR\to\RR$, $h(\lambda)=\gamma(x+\lambda(\alpha x+y))$. The following statements are equivalent:
\begin{enumerate}[label={(\alph*)},leftmargin=*,align=left,noitemsep]
\item{$x\perp_B^\eps(\alpha x+y)$,\label{right-birkhoff-orthogonality}}
\item{$h(0)\leq h(\lambda)+\eps$ for all $\lambda\in\RR$,\label{auxiliary_function_minimum}}
\item{there exists $x^\ast\in\RR^d$ such that $\gamma^\circ(x^\ast)=1$, $\skpr{x^\ast}{x}\geq\gamma(x)-\eps$ and $\alpha=-\frac{\skpr{x^\ast}{y}}{\skpr{x^\ast}{x}}$,\label{alpha-formula}}
\item{$\gamma^\prime_\eps(x;\pm(\alpha x+y))\geq 0$.\label{directional-derivative-optimality}}
\end{enumerate}
\end{Satz}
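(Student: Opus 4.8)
The plan is to prove the equivalence by establishing the cycle \ref{right-birkhoff-orthogonality} $\Leftrightarrow$ \ref{auxiliary_function_minimum} $\Leftrightarrow$ \ref{directional-derivative-optimality} first, and then closing the loop with \ref{directional-derivative-optimality} $\Leftrightarrow$ \ref{alpha-formula}. The first equivalence, \ref{right-birkhoff-orthogonality} $\Leftrightarrow$ \ref{auxiliary_function_minimum}, is immediate: by \hyperref[definition:orthogonality]{Definition~\ref*{definition:orthogonality}}, $x\perp_B^\eps(\alpha x+y)$ means $\gamma(x)\leq\gamma(x+\lambda(\alpha x+y))+\eps$ for all $\lambda\in\RR$, which is literally $h(0)\leq h(\lambda)+\eps$ since $h(0)=\gamma(x)$. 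Next, $h=\psi\circ\gamma$ with $\psi$ affine is too crude here; instead I would note that $h$ is itself a convex function on $\RR$ (composition of the convex function $\gamma$ with the affine map $\lambda\mapsto x+\lambda(\alpha x+y)$, using \cite[Theorem~5.1]{Rockafellar1972} or directly). Applying \hyperref[result:fermat-rule]{Lemma~\ref*{result:fermat-rule}} to $f=h$ and the point $0\in\RR$ gives that \ref{auxiliary_function_minimum} is equivalent to $h^\prime_\eps(0;t)\geq 0$ for all $t\in\RR$, i.e.\ (testing $t=1$ and $t=-1$, and using positive homogeneity of $h^\prime_\eps(0;\cdot)$ in the direction) to $h^\prime_\eps(0;\pm 1)\geq 0$. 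The remaining task in this part is the chain-rule identity $h^\prime_\eps(0;\pm 1)=\gamma^\prime_\eps(x;\pm(\alpha x+y))$, which follows directly from the definition of the $\eps$-directional derivative by substituting $h(\lambda)=\gamma(x+\lambda(\alpha x+y))$ into the infimum defining $h^\prime_\eps(0;\pm 1)$; this yields \ref{directional-derivative-optimality}.

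For \ref{directional-derivative-optimality} $\Rightarrow$ \ref{alpha-formula}, the idea is to unfold the directional-derivative inequalities via \eqref{eq:directional-derivative-subdifferential-sup} and \eqref{eq:directional-derivative-subdifferential-inf}. The condition $\gamma^\prime_\eps(x;\alpha x+y)\geq 0$ together with $-\gamma^\prime_\eps(x;-(\alpha x+y))\leq 0$ (which is the same as $\gamma^\prime_\eps(x;-(\alpha x+y))\geq 0$) tells us that $0$ lies in the interval $[-\gamma^\prime_\eps(x;-(\alpha x+y)),\gamma^\prime_\eps(x;\alpha x+y)]=\setcond{\skpr{x^\ast}{\alpha x+y}}{x^\ast\in\p_\eps\gamma(x)}$. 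Hence there exists $x^\ast\in\p_\eps\gamma(x)$ with $\skpr{x^\ast}{\alpha x+y}=0$, i.e.\ $\alpha\skpr{x^\ast}{x}=-\skpr{x^\ast}{y}$. Since $0\leq\eps<\gamma(x)$, formula \eqref{eq:subdifferential-gauge} (the second case, as $x\neq 0$) gives $\gamma^\circ(x^\ast)\leq 1$ and $\skpr{x^\ast}{x}\geq\gamma(x)-\eps>0$; in particular $\skpr{x^\ast}{x}$ is a strictly positive quantity, so we may divide to get $\alpha=-\skpr{x^\ast}{y}/\skpr{x^\ast}{x}$. It remains to upgrade $\gamma^\circ(x^\ast)\leq 1$ to $\gamma^\circ(x^\ast)=1$: from the Cauchy--Schwarz inequality \eqref{eq:cauchy-schwarz} we have $\gamma(x)-\eps\leq\skpr{x^\ast}{x}\leq\gamma^\circ(x^\ast)\gamma(x)$, and since $\gamma(x)>0$ this forces $\gamma^\circ(x^\ast)\geq 1-\eps/\gamma(x)$; but that alone is not enough for equality, so instead I would argue that if $\gamma^\circ(x^\ast)<1$ then $\tilde{x}^\ast\defeq x^\ast/\gamma^\circ(x^\ast)$ also annihilates $\alpha x+y$ and still satisfies $\skprs{\tilde{x}^\ast}{x}\geq\skpr{x^\ast}{x}\geq\gamma(x)-\eps$ while $\gamma^\circ(\tilde{x}^\ast)=1$, so we may replace $x^\ast$ by $\tilde{x}^\ast$ from the outset (the case $x^\ast=0$ is excluded because $\skpr{x^\ast}{x}>0$). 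This gives \ref{alpha-formula}.

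The converse direction \ref{alpha-formula} $\Rightarrow$ \ref{directional-derivative-optimality} is the reverse of the above: given $x^\ast$ with $\gamma^\circ(x^\ast)=1$, $\skpr{x^\ast}{x}\geq\gamma(x)-\eps$, and $\alpha\skpr{x^\ast}{x}=-\skpr{x^\ast}{y}$, formula \eqref{eq:subdifferential-gauge} shows $x^\ast\in\p_\eps\gamma(x)$ (here again $0\leq\eps<\gamma(x)$ guarantees we are in the non-trivial case of the subdifferential and that $\skpr{x^\ast}{x}\geq\gamma(x)-\eps$ is exactly the defining inequality), and $\skpr{x^\ast}{\alpha x+y}=0$. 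Then $0=\skpr{x^\ast}{\alpha x+y}\leq\gamma^\prime_\eps(x;\alpha x+y)$ by \eqref{eq:directional-derivative-subdifferential-sup}, and $0=\skpr{x^\ast}{-(\alpha x+y)}\leq\gamma^\prime_\eps(x;-(\alpha x+y))$ likewise, which is \ref{directional-derivative-optimality}.

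I expect the main obstacle to be the bookkeeping around the strict inequality $\eps<\gamma(x)$ and the normalization $\gamma^\circ(x^\ast)=1$: one has to make sure that the chosen $\eps$-subgradient is nonzero (so that division by $\skpr{x^\ast}{x}$ is legitimate and so that rescaling to $\gamma^\circ$-norm one makes sense), and that rescaling does not destroy the inequality $\skpr{x^\ast}{x}\geq\gamma(x)-\eps$ — which is fine precisely because rescaling by $1/\gamma^\circ(x^\ast)\geq 1$ only increases $\skpr{x^\ast}{x}$. The role of the hypothesis $\eps<\gamma(x)$, as the remark preceding the theorem already flags, is to rule out the degenerate regime where $\p_\eps\gamma(x)$ collapses and every $y$ is trivially $\eps$-Birkhoff orthogonal; keeping track of this throughout is the one genuinely delicate point, while everything else is a routine application of \hyperref[result:fermat-rule]{Lemma~\ref*{result:fermat-rule}}, \eqref{eq:subdifferential-gauge}, and \eqref{eq:directional-derivative-subdifferential-sup}--\eqref{eq:directional-derivative-subdifferential-inf}.
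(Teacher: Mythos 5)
Your proposal is correct and follows essentially the same route as the paper: both reduce \ref{auxiliary_function_minimum} to Fermat's rule (\hyperref[result:fermat-rule]{Lemma~\ref*{result:fermat-rule}}) for the one-dimensional restriction $h$ and then translate the conditions $0\in\p_\eps h(0)$ and $h^\prime_\eps(0;\pm 1)\geq 0$ back to $\gamma$ via \eqref{eq:subdifferential-gauge}. The only cosmetic difference is that you obtain $h^\prime_\eps(0;\pm 1)=\gamma^\prime_\eps(x;\pm(\alpha x+y))$ by direct substitution into the definition of the $\eps$-directional derivative and pass from \ref{directional-derivative-optimality} to \ref{alpha-formula} via the interval identity $\setcond{\skpr{x^\ast}{z}}{x^\ast\in\p_\eps\gamma(x)}=[-\gamma^\prime_\eps(x;-z),\gamma^\prime_\eps(x;z)]$, whereas the paper invokes the chain rule for $\eps$-subdifferentials of the composite $h$; your explicit rescaling argument for achieving $\gamma^\circ(x^\ast)=1$ fills in a step the paper only asserts.
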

\begin{proof}
The equivalence \ref{right-birkhoff-orthogonality}$\Longleftrightarrow$\ref{auxiliary_function_minimum} is a direct consequence of the definition. The equivalences \ref{auxiliary_function_minimum}$\Longleftrightarrow$\ref{alpha-formula}$\Longleftrightarrow$\ref{directional-derivative-optimality} follow from \hyperref[result:fermat-rule]{Lemma~\ref*{result:fermat-rule}}. To this end, we show that \ref{alpha-formula} and \ref{directional-derivative-optimality} are reformulations of the conditions $0\in\p_\eps h(0)$ and $h^\prime_\eps(0;\nu)\geq 0$ for all $\nu\in\RR$, respectively. First, \cite[Theorem~2.8.10]{Zalinescu2002} yields
\begin{align*}
\p_\eps h(\lambda)&=\bigcup_{\substack{\eps_1,\eps_2\geq 0\\\eps_1+\eps_2=\eps\\q\in\p_{\eps_2}\gamma(x+\lambda(\alpha x+y))}}\p_{\eps_1}\skpr{x^\ast}{x+\cdot(\alpha x+y)}(\lambda)\\
&=\bigcup_{\substack{\eps_1,\eps_2\geq 0\\\eps_1+\eps_2=\eps\\x^\ast\in\p_{\eps_2}\gamma(x+\lambda(\alpha x+y))}}\setn{\skpr{x^\ast}{\alpha x+y}}\\
&=\setcond{\skpr{x^\ast}{\alpha x+y}}{x^\ast\in\p_\eps\gamma(x+\lambda(\alpha x+y))},
\end{align*}
where the first equality holds because $a:\RR\to\RR$, $a(\lambda)=\skpr{x^\ast}{x+\lambda(\alpha x+y)}$ is an affine function.

Using \cite[Theorem~2.4.9]{Zalinescu2002}, we obtain
\begin{align*}
h^\prime_\eps(\lambda;\nu)&=\sup\setcond{\mu\nu}{\mu\in\p_\eps h(\lambda)}\\
&=\sup\setcond{\skpr{x^\ast}{\alpha x+y}\nu}{x^\ast\in\p_\eps\gamma(x+\lambda(\alpha x+y))}\\
&=\sup\setcond{\skpr{x^\ast}{\nu(\alpha x+y)}}{x^\ast\in\p_\eps\gamma(x+\lambda(\alpha x+y))}\\
&=\gamma^\prime_\eps(x+\lambda(\alpha x+y),\nu(\alpha x+y)).
\end{align*}

In particular,
\begin{align*}
h^\prime_\eps(0;\nu)&=\gamma^\prime_\eps(x;\nu(\alpha x+y)),\\
\p_\eps h(0)&=\setcond{\skpr{x^\ast}{\alpha x+y}}{x^\ast\in\p_\eps\gamma(x)}.
\end{align*}
Taking the positive homogeneity of the $\eps$-directional derivative in the second variable into account, it suffices to consider $\nu=\pm 1$ for checking whether $h^\prime(0;\nu)\geq 0$ for all $\nu\in\RR$. 

Moreover, we have 
\begin{align*}
&0\in\p_\eps h(0)\\
&\Longleftrightarrow \text{ there exists } x^\ast\in\p_\eps\gamma(x) \text{ such that } \skpr{x^\ast}{\alpha x+y}=0\\
&\Longleftrightarrow \text{ there exists } x^\ast\in\RR^d \text{ such that } \gamma^\circ(x^\ast)\leq 1,\skpr{x^\ast}{x}\geq\gamma(x)-\eps, \alpha=-\frac{\skpr{x^\ast}{y}}{\skpr{x^\ast}{x}}\\
&\Longleftrightarrow \text{ there exists } x^\ast\in\RR^d \text{ such that } \gamma^\circ(x^\ast)=1,\skpr{x^\ast}{x}\geq\gamma(x)-\eps, \alpha=-\frac{\skpr{x^\ast}{y}}{\skpr{x^\ast}{x}}.
\end{align*}
This completes the proof.
\end{proof}

For $\eps=0$, the characterization of Birkhoff orthogonality in terms of directional derivatives given in \hyperref[result:right-birkhoff-characterization]{Theorem~\ref*{result:right-birkhoff-characterization}}\ref{right-birkhoff-orthogonality}$\Longleftrightarrow$\ref{directional-derivative-optimality} can be rewritten in a form resembling \cite[Theorem~3.2]{James1947a}.
\begin{Kor}\label{result:right-birkhoff-directional-derivative}
Let $(\RR^d,\gamma)$ be a generalized Minkowski space, $x,y\in\RR^d$, and $\alpha\in\RR$. Then $x\perp_B(\alpha x+y)$ implies
\begin{equation}
-\gamma^\prime(x;-y)\leq-\alpha\gamma(x)\leq\gamma^\prime(x;y).\label{eq:right-birkhoff-directional-derivative-characterization}
\end{equation}
\end{Kor}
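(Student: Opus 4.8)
The plan is to pull a separating functional out of Theorem~\ref{result:right-birkhoff-characterization} and evaluate the subdifferential description of the directional derivative on it. Both bounds in \eqref{eq:right-birkhoff-directional-derivative-characterization} are trivial when $x=0$, since there $-\alpha\gamma(x)=0$ while $\gamma^\prime(0;y)=\gamma(y)\geq 0\geq-\gamma(-y)=-\gamma^\prime(0;-y)$; so I would assume $x\neq 0$ from the outset, which in particular makes $\eps=0<\gamma(x)$ and legitimizes the use of Theorem~\ref{result:right-birkhoff-characterization}.

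First I would invoke the implication \ref{right-birkhoff-orthogonality}$\Longrightarrow$\ref{alpha-formula} of Theorem~\ref{result:right-birkhoff-characterization} at $\eps=0$: there is an $x^\ast\in\RR^d$ with $\gamma^\circ(x^\ast)=1$, $\skpr{x^\ast}{x}\geq\gamma(x)$, and $\alpha=-\skpr{x^\ast}{y}/\skpr{x^\ast}{x}$. Combining $\skpr{x^\ast}{x}\geq\gamma(x)$ with the Cauchy--Schwarz inequality \eqref{eq:cauchy-schwarz} forces $\skpr{x^\ast}{x}=\gamma(x)>0$, so the quotient defining $\alpha$ makes sense and $-\alpha\gamma(x)=\skpr{x^\ast}{y}$. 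Moreover, $\gamma^\circ(x^\ast)\leq 1$ together with $\skpr{x^\ast}{x}\geq\gamma(x)=\gamma(x)-\eps$ is exactly the condition appearing in the second branch of \eqref{eq:subdifferential-gauge}, so $x^\ast\in\p\gamma(x)$. The identification of $\setcond{\skpr{x^\ast}{y}}{x^\ast\in\p\gamma(x)}$ with the interval $[-\gamma^\prime(x;-y),\gamma^\prime(x;y)]$ recorded just after \eqref{eq:directional-derivative-subdifferential-inf} then gives $-\gamma^\prime(x;-y)\leq\skpr{x^\ast}{y}\leq\gamma^\prime(x;y)$, and substituting $\skpr{x^\ast}{y}=-\alpha\gamma(x)$ yields \eqref{eq:right-birkhoff-directional-derivative-characterization}.

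There is no real obstacle; the only step worth a word of care is the tightening of $\skpr{x^\ast}{x}\geq\gamma(x)$ to equality via \eqref{eq:cauchy-schwarz}. As an alternative route avoiding the dual vector, one can use the equivalence \ref{right-birkhoff-orthogonality}$\Longleftrightarrow$\ref{directional-derivative-optimality}: $\gamma^\prime(x;\cdot)$ is the support function of $\p\gamma(x)$ by \eqref{eq:directional-derivative-subdifferential-sup}, hence sublinear, and positive homogeneity of $\gamma$ gives $\gamma^\prime(x;tx)=t\gamma(x)$ for all $t\in\RR$; combining the sublinear upper bound with the decomposition $\gamma^\prime(x;y)\leq\gamma^\prime(x;\alpha x+y)+\gamma^\prime(x;-\alpha x)$ produces the identity $\gamma^\prime(x;\alpha x+y)=\alpha\gamma(x)+\gamma^\prime(x;y)$ and likewise $\gamma^\prime(x;-\alpha x-y)=-\alpha\gamma(x)+\gamma^\prime(x;-y)$, so the two nonnegativity conditions in \ref{directional-derivative-optimality} translate directly into the two inequalities of \eqref{eq:right-birkhoff-directional-derivative-characterization}.
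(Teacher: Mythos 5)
Your proof is correct. Your primary argument takes a mildly different path from the paper's: you pull the dual vector out of \hyperref[result:right-birkhoff-characterization]{Theorem~\ref*{result:right-birkhoff-characterization}}\ref{alpha-formula}, tighten $\skpr{x^\ast}{x}\geq\gamma(x)$ to equality via \eqref{eq:cauchy-schwarz} so that $-\alpha\gamma(x)=\skpr{x^\ast}{y}$ with $x^\ast\in\p\gamma(x)$, and then read off both bounds from the interval description $\setcond{\skpr{x^\ast}{y}}{x^\ast\in\p\gamma(x)}=[-\gamma^\prime(x;-y),\gamma^\prime(x;y)]$. The paper instead goes through item \ref{directional-derivative-optimality} and establishes the affine identity $\gamma^\prime(x;\alpha x+\mu y)=\alpha\gamma(x)+\mu\gamma^\prime(x;y)$ by writing $\gamma^\prime(x;\cdot)$ as a maximum over $\p\gamma(x)$ and using $\skpr{x^\ast}{x}=\gamma(x)$ there; your sketched \enquote{alternative route} is exactly this argument, with the identity obtained from sublinearity of the support function rather than from the explicit maximum. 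The two proofs rest on the same two facts (the normalization of subgradients of a gauge and the support-function description of the directional derivative) and are interchangeable. Yours has the small merit of treating the case $x=0$ explicitly, which is needed because \hyperref[result:right-birkhoff-characterization]{Theorem~\ref*{result:right-birkhoff-characterization}} assumes $\eps<\gamma(x)$ --- a point the paper's proof glosses over, although its computation happens to remain valid at $x=0$.
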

\begin{proof}
\hyperref[result:right-birkhoff-characterization]{Theorem~\ref*{result:right-birkhoff-characterization}}, we have $x\perp_B(\alpha x+y)$ if and only if $\gamma^\prime(x;\pm(\alpha x+y))\geq 0$.

Using \hyperref[eq:subdifferential-gauge]{Equation~(\ref*{eq:subdifferential-gauge})}, we obtain $\skpr{x^\ast}{x}=\gamma(x)$ for all $x^\ast\in\p\gamma(x)$ and thus
\begin{align*}
\gamma^\prime(x;\alpha x+\mu y)&=\max\setcond{\alpha\skpr{x^\ast}{x}+\mu\skpr{x^\ast}{y}}{x^\ast\in\p\gamma(x)}\\
&=\max\setcond{\alpha\gamma(x)+\mu\skpr{x^\ast}{y}}{x^\ast\in\p\gamma(x)}\\
&=\alpha\gamma(x)+\mu\max\setcond{\skpr{x^\ast}{y}}{x^\ast\in\p\gamma(x)}\\
&=\alpha\gamma(x)+\mu\gamma^\prime(x;y)
\end{align*}
for all $x,y\in\RR^d$, $\alpha\in\RR$, and $\mu\geq 0$.
\end{proof}

As a corollary of \hyperref[result:duality-mapping]{Theorems~\ref*{result:duality-mapping}} and \ref{result:right-birkhoff-characterization}, we obtain the following result, see \cite[Equation~(18)]{Precupanu2013} and \cite[Proposition~I.4.10]{Cioranescu1990} for a special case of its part \ref{annihilating-weighted-subgradient}.
\begin{Kor}\label{result:birkhoff-orthogonality-annihilating-subgradient}
Let $\phi:[0,+\infty)\to [0,+\infty)$ be a weight. For all $x,y\in\RR^d$, $0\leq\eps<\gamma(x)$, we have
\begin{enumerate}[label={(\alph*)},leftmargin=*,align=left,noitemsep]
\item{$x\perp_B^\eps y$ if and only if there exists $x^\ast\in\p_\eps\gamma(x)$ such that $\skpr{x^\ast}{y}=0$;\label{annihilating-eps-subgradient}}
\item{$x\perp_B y$ if and only if there exists $x^\ast\in J_\phi\gamma(x)$ such that $\skpr{x^\ast}{y}=0$.\label{annihilating-weighted-subgradient}}
\end{enumerate}
\end{Kor}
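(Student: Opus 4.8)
The plan is to read both assertions off the machinery already assembled, specializing \hyperref[result:right-birkhoff-characterization]{Theorem~\ref*{result:right-birkhoff-characterization}} to $\alpha=0$ for part~\ref{annihilating-eps-subgradient} and then combining this with \hyperref[result:duality-mapping]{Theorem~\ref*{result:duality-mapping}} for part~\ref{annihilating-weighted-subgradient}.

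For \ref{annihilating-eps-subgradient}, set $\alpha=0$ and consider $h:\RR\to\RR$, $h(\lambda)=\gamma(x+\lambda y)$. By \hyperref[definition:orthogonality]{Definition~\ref*{definition:orthogonality}}, $x\perp_B^\eps y$ means precisely $h(0)\leq h(\lambda)+\eps$ for all $\lambda\in\RR$, which by \hyperref[result:fermat-rule]{Lemma~\ref*{result:fermat-rule}} is equivalent to $0\in\p_\eps h(0)$. The chain-rule computation carried out in the proof of \hyperref[result:right-birkhoff-characterization]{Theorem~\ref*{result:right-birkhoff-characterization}} (specialized to $\alpha=0$) gives $\p_\eps h(0)=\setcond{\skpr{x^\ast}{y}}{x^\ast\in\p_\eps\gamma(x)}$, so $0\in\p_\eps h(0)$ holds if and only if some $x^\ast\in\p_\eps\gamma(x)$ satisfies $\skpr{x^\ast}{y}=0$. (Alternatively, one may quote \ref{right-birkhoff-orthogonality}$\Longleftrightarrow$\ref{alpha-formula} of \hyperref[result:right-birkhoff-characterization]{Theorem~\ref*{result:right-birkhoff-characterization}} with $\alpha=0$: since $0\leq\eps<\gamma(x)$ forces $\skpr{x^\ast}{x}\geq\gamma(x)-\eps>0$, the relation $\alpha=-\skpr{x^\ast}{y}/\skpr{x^\ast}{x}$ collapses to $\skpr{x^\ast}{y}=0$, and after normalizing $x^\ast$ one checks via \eqref{eq:subdifferential-gauge} that the conditions $\gamma^\circ(x^\ast)\leq 1$ and $\skpr{x^\ast}{x}\geq\gamma(x)-\eps$ describe exactly $\p_\eps\gamma(x)$, noting $x\neq 0$ because $\gamma(x)>\eps\geq 0$.)

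For \ref{annihilating-weighted-subgradient}, apply \ref{annihilating-eps-subgradient} with $\eps=0$: $x\perp_B y$ holds if and only if there is $x^\ast\in\p\gamma(x)$ with $\skpr{x^\ast}{y}=0$. By \hyperref[result:duality-mapping]{Theorem~\ref*{result:duality-mapping}}, since $\psi$ is differentiable with $\psi^\prime=\phi$, we have $J_\phi\gamma(x)=\p(\psi\circ\gamma)(x)=\phi(\gamma(x))\,\p\gamma(x)$. As $\gamma(x)>0$, the factor $\phi(\gamma(x))$ is positive for the weights relevant here, and rescaling the set $\p\gamma(x)$ by a positive scalar affects neither whether an element lies in it nor whether $\skpr{\cdot}{y}$ vanishes on that element; hence $\p\gamma(x)$ contains a functional annihilating $y$ exactly when $J_\phi\gamma(x)$ does, which is the claim.

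There is no deep obstacle: the corollary merely repackages \hyperref[result:fermat-rule]{Lemma~\ref*{result:fermat-rule}}, \hyperref[result:right-birkhoff-characterization]{Theorem~\ref*{result:right-birkhoff-characterization}}, and \hyperref[result:duality-mapping]{Theorem~\ref*{result:duality-mapping}}. The two places that want a little care are keeping the $\eps$-subdifferential bookkeeping in \ref{annihilating-eps-subgradient} consistent with \hyperref[result:right-birkhoff-characterization]{Theorem~\ref*{result:right-birkhoff-characterization}} (the affine inner summand contributes no error term), and ensuring in \ref{annihilating-weighted-subgradient} that the rescaling factor $\phi(\gamma(x))$ does not vanish, so that the passage between $\p\gamma(x)$ and $J_\phi\gamma(x)$ is a genuine equivalence rather than a one-sided inclusion.
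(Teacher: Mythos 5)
Your proof is correct and is precisely the route the paper intends: the paper offers no explicit argument, merely announcing the corollary as a consequence of Theorems~\ref{result:duality-mapping} and~\ref{result:right-birkhoff-characterization}, and your specialization of the latter to $\alpha=0$ together with the identity $J_\phi\gamma(x)=\phi(\gamma(x))\,\p\gamma(x)$ is exactly that reading. Your caveat about needing $\phi(\gamma(x))>0$ is well taken---under the paper's weak definition of a weight this can fail, in which case $J_\phi\gamma(x)=\setn{0}$ and part~\ref{annihilating-weighted-subgradient} degenerates---but that is a gap in the paper's statement rather than in your argument.
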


As a consequence of \hyperref[result:birkhoff-orthogonality-annihilating-subgradient]{Corollary~\ref*{result:birkhoff-orthogonality-annihilating-subgradient}}\ref{annihilating-weighted-subgradient},
\begin{align}
\setcond{\alpha\in\RR}{x\perp(\alpha x+y)}&=\setcond{\frac{\skpr{x^\ast}{y}}{\skpr{x^\ast}{x}}}{x^\ast\in J_\phi(x)}\nonumber\\
&=\setcond{-\frac{\skpr{x^\ast}{y}}{\gamma(x)\phi(\gamma(x))}}{x^\ast\in J_\phi(x)}\label{eq:right-parameters}
\end{align}
is a non-empty compact interval provided $x\neq 0$, see \cite[Corollary~7, Remark~8]{Precupanu2013} and \cite[Corollary~11, Remark~16]{Dragomir2004}. 

As a reformulation of \hyperref[eq:right-parameters]{Equation~(\ref*{eq:right-parameters})}, duality maps can be written in terms of Birkhoff orthogonality analogously to \cite[Theorem~5]{Precupanu2013} on which, in turn, the proof is patterned.
\begin{Satz}\label{result:duality-mapping-via-birkhoff-orthogonality}
Let $(X,\gamma)$ be a generalized Minkowski space and let $\phi:[0,+\infty)\to [0,+\infty)$ be a weight. For $x\neq 0$,
\begin{equation}
J_\phi\gamma(x)=\setcond{x^\ast\in\RR^d}{x\perp_B\lr{y-\frac{\skpr{x^\ast}{y}}{\gamma(x)\phi(\gamma(x))}x}\fall y\in\RR^d}
\end{equation}
\end{Satz}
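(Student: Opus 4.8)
The plan is to prove the two inclusions separately, letting \eqref{eq:right-parameters} do most of the work. Throughout I would abbreviate $c\defeq\gamma(x)\phi(\gamma(x))$; since $x\neq 0$ we have $\gamma(x)>0$, and as the right-hand side of the asserted identity only makes sense when $c>0$, I assume this. The one fact to record at the outset is that, by \hyperref[result:duality-mapping]{Theorem~\ref*{result:duality-mapping}}, $J_\phi\gamma(x)=\p(\psi\circ\gamma)(x)$ is the subdifferential of a finite convex function, hence a non-empty, compact, and convex subset of $\RR^d$. Consequently, writing $h_{J_\phi\gamma(x)}$ for its support function, a vector $x^\ast$ belongs to $J_\phi\gamma(x)$ if and only if $\skpr{x^\ast}{y}\leq h_{J_\phi\gamma(x)}(y)$ for all $y\in\RR^d$ --- a non-empty closed convex set being the intersection of the closed half-spaces that contain it.

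For the inclusion ``$\subseteq$'' I would fix $x^\ast\in J_\phi\gamma(x)$ and an arbitrary $y\in\RR^d$, and note that $-\skpr{x^\ast}{y}/c$ occurs on the right-hand side of \eqref{eq:right-parameters} and hence on its left-hand side; that is, $x\perp_B(\alpha x+y)$ with $\alpha=-\skpr{x^\ast}{y}/c$, which is exactly $x\perp_B\lr{y-\frac{\skpr{x^\ast}{y}}{c}x}$. Since $y$ was arbitrary, $x^\ast$ lies in the set on the right of the claimed identity.

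For the inclusion ``$\supseteq$'' I would take $x^\ast\in\RR^d$ with $x\perp_B\lr{y-\frac{\skpr{x^\ast}{y}}{c}x}$ for every $y\in\RR^d$, fix a single $y$, and observe that $\alpha\defeq-\skpr{x^\ast}{y}/c$ satisfies $x\perp_B(\alpha x+y)$; \eqref{eq:right-parameters} then supplies some $z^\ast\in J_\phi\gamma(x)$ with $\alpha=-\skpr{z^\ast}{y}/c$, and since $c\neq 0$ this forces $\skpr{x^\ast}{y}=\skpr{z^\ast}{y}\leq h_{J_\phi\gamma(x)}(y)$. As $y$ was arbitrary, the criterion from the first paragraph yields $x^\ast\in J_\phi\gamma(x)$.

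The only step where I expect to need a little care is this last one: it relies on $J_\phi\gamma(x)$ being closed and convex --- which \hyperref[result:duality-mapping]{Theorem~\ref*{result:duality-mapping}} provides through the identification with a subdifferential --- together with the standard separation fact that a non-empty closed convex set is determined by its support function. Should one wish to sidestep \eqref{eq:right-parameters}, the same argument can be routed through \hyperref[result:birkhoff-orthogonality-annihilating-subgradient]{Corollary~\ref*{result:birkhoff-orthogonality-annihilating-subgradient}}\ref{annihilating-weighted-subgradient}, using that $\skpr{z^\ast}{x}=\gamma^\circ(z^\ast)\gamma(x)=c$ for every $z^\ast\in J_\phi\gamma(x)$, so that $\skpr{z^\ast}{y-\frac{\skpr{x^\ast}{y}}{c}x}=0$ is equivalent to $\skpr{z^\ast}{y}=\skpr{x^\ast}{y}$; the two parts then run as before.
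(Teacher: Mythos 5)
Your proof is correct. The forward inclusion coincides with the paper's: both come down to \hyperref[result:birkhoff-orthogonality-annihilating-subgradient]{Corollary~\ref*{result:birkhoff-orthogonality-annihilating-subgradient}}\ref{annihilating-weighted-subgradient} (your detour through \hyperref[eq:right-parameters]{Equation~(\ref*{eq:right-parameters})} is that corollary restated), using $\skpr{x^\ast}{x}=\gamma^\circ(x^\ast)\gamma(x)=\gamma(x)\phi(\gamma(x))$ so that $x^\ast$ itself annihilates $y-\frac{\skpr{x^\ast}{y}}{\gamma(x)\phi(\gamma(x))}x$. The genuine difference is how the reverse inclusion is finished. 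Both arguments extract, for each $y$, a vector $u_y^\ast\in J_\phi\gamma(x)$ with $\skprs{u_y^\ast}{y}=\skpr{x^\ast}{y}$. The paper then verifies the two defining equations of $J_\phi\gamma(x)$ for $x^\ast$ by hand: setting $y=x$ yields $\skpr{x^\ast}{x}=\phi(\gamma(x))\gamma(x)$ and hence $\gamma^\circ(x^\ast)\geq\phi(\gamma(x))$, while $\skpr{x^\ast}{y}=\skprs{u_y^\ast}{y}\leq\gamma^\circ(u_y^\ast)\gamma(y)$ gives the reverse bound, so $\gamma^\circ(x^\ast)=\phi(\gamma(x))$ and membership follows from the definition. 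You instead deduce $\skpr{x^\ast}{y}\leq h_{J_\phi\gamma(x)}(y)$ for all $y$ and appeal to the support-function characterization of the non-empty compact convex set $J_\phi\gamma(x)=\p(\psi\circ\gamma)(x)$ supplied by \hyperref[result:duality-mapping]{Theorem~\ref*{result:duality-mapping}}. Both routes are valid; the paper's is self-contained at the level of the polar gauge and never needs convexity or closedness of $J_\phi\gamma(x)$, whereas yours is shorter but imports \hyperref[result:duality-mapping]{Theorem~\ref*{result:duality-mapping}} together with a separation argument. Your explicit flagging of the implicit hypothesis $\phi(\gamma(x))>0$, without which the right-hand side of the identity is not even well defined, addresses a point the paper passes over in silence.
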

\begin{proof}
Let $x^\ast\in J_\phi\gamma(x)$. Then $\skpr{x^\ast}{y-\frac{\skpr{x^\ast}{y}}{\gamma(x)\phi(\gamma(x))}x}=0$. Taking \hyperref[result:birkhoff-orthogonality-annihilating-subgradient]{Corollary~\ref*{result:birkhoff-orthogonality-annihilating-subgradient}} into account, we have $x\perp_B\lr{y-\frac{\skpr{x^\ast}{y}}{\gamma(x)\phi(\gamma(x))}x}$.

Conversely, let $x^\ast\in\RR^d$ such that $x\perp_B\lr{y-\frac{\skpr{x^\ast}{y}}{\gamma(x)\phi(\gamma(x))}}x$ for all $y\in\RR^d$.
By \hyperref[result:birkhoff-orthogonality-annihilating-subgradient]{Corollary~\ref*{result:birkhoff-orthogonality-annihilating-subgradient}}, there exists vectors $u_y^\ast\in J_\phi\gamma(x)$, i.e., $\skprs{u_y^\ast}{x}=\gamma^\circ(u_y^\ast)\gamma(x)$, $\gamma^\circ(u_y^\ast)=\phi(\gamma(x))$, such that
\begin{equation*}
\skpr{u_y^\ast}{y-\frac{\skpr{x^\ast}{y}}{\gamma(x)\phi(\gamma(x))}x}=0.
\end{equation*}
Thus $\skprs{u_y^\ast}{y}=\skpr{x^\ast}{y}$ for all $y\in\RR^d$.
In particular, for $y=x$ we obtain $\skpr{x^\ast}{x}=\skprs{u_x^\ast}{x}=\phi(\gamma(x))\gamma(x)$, i.e., $\gamma^\circ(x^\ast)\geq\phi(\gamma(x))$.
On the other hand, for all $y\in\RR^d$, $\skpr{x^\ast}{y}=\skprs{u_y^\ast}{y}\leq\gamma^\circ(u_y^\ast)\gamma(y)=\phi(\gamma(x))\gamma(y)$, so $\gamma^\circ(x^\ast)\leq\phi(\gamma(x))$. Summarizing, $\gamma^\circ(x^\ast)=\phi(\gamma(x))$ and the proof is complete.
\end{proof}

Extending \cite[Corollary~2.2, Lemma~3.1]{James1947a} to generalized Minkowski spaces, there is an analogous statement about the numbers $\alpha\in\RR$ for which $x\perp_B^\eps(\alpha x+y)$.
\begin{Prop}\label{result:right-birkhoff-existence}
Let $x,y\in\RR^d$ and $0\leq\eps<\gamma(x)$. The set of numbers $\alpha\in\RR$ for which $x\perp_B^\eps (\alpha x+y)$ is a non-empty compact interval. In particular, if $x\perp_B^\eps(\alpha x+y)$, then $\abs{\alpha}\leq\max\setn{\frac{\gamma(y)}{\gamma(x)-\eps},\frac{\gamma(-y)}{\gamma(x)-\eps}}$.
\end{Prop}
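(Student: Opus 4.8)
The plan is to realize the set $A\defeq\setcond{\alpha\in\RR}{x\perp_B^\eps(\alpha x+y)}$ as a continuous image of the compact convex set $\p_\eps\gamma(x)$ and then to read off all assertions from that description. Since $\eps<\gamma(x)$ forces $x\neq 0$, the formula \eqref{eq:subdifferential-gauge} identifies $\p_\eps\gamma(x)$ with $\setcond{x^\ast\in\RR^d}{\skpr{x^\ast}{x}\geq\gamma(x)-\eps,\ \gamma^\circ(x^\ast)\leq 1}$, a non-empty, compact, and convex set on which $\skpr{x^\ast}{x}\geq\gamma(x)-\eps>0$ throughout.

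First I would apply \hyperref[result:birkhoff-orthogonality-annihilating-subgradient]{Corollary~\ref*{result:birkhoff-orthogonality-annihilating-subgradient}}\ref{annihilating-eps-subgradient} with the vector $\alpha x+y$ in the second argument: $x\perp_B^\eps(\alpha x+y)$ holds if and only if there is some $x^\ast\in\p_\eps\gamma(x)$ with $\alpha\skpr{x^\ast}{x}+\skpr{x^\ast}{y}=0$. Dividing by $\skpr{x^\ast}{x}>0$, this is equivalent to $\alpha=-\skpr{x^\ast}{y}/\skpr{x^\ast}{x}$. Hence $A=F(\p_\eps\gamma(x))$, where $F:\p_\eps\gamma(x)\to\RR$, $F(x^\ast)=-\skpr{x^\ast}{y}/\skpr{x^\ast}{x}$, is well defined and continuous, being a quotient of continuous (linear) functions whose denominator is bounded below by $\gamma(x)-\eps>0$ on the whole domain.

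Next I would invoke topology: $\p_\eps\gamma(x)$ is compact and, being convex, connected, so its continuous image $A$ is a non-empty compact connected subset of $\RR$, i.e., a non-empty compact interval. For the final estimate, pick $\alpha\in A$ and write $\alpha=-\skpr{x^\ast}{y}/\skpr{x^\ast}{x}$ with $x^\ast\in\p_\eps\gamma(x)$; then $\abs{\alpha}\leq\abs{\skpr{x^\ast}{y}}/(\gamma(x)-\eps)$, and the Cauchy--Schwarz inequality \eqref{eq:cauchy-schwarz} together with $\gamma^\circ(x^\ast)\leq 1$ gives $\skpr{x^\ast}{y}\leq\gamma(y)$ and $-\skpr{x^\ast}{y}=\skpr{x^\ast}{-y}\leq\gamma(-y)$, whence $\abs{\skpr{x^\ast}{y}}\leq\max\setn{\gamma(y),\gamma(-y)}$ and the bound $\abs{\alpha}\leq\max\setn{\frac{\gamma(y)}{\gamma(x)-\eps},\frac{\gamma(-y)}{\gamma(x)-\eps}}$ follows.

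There is no deep obstacle here; the one point requiring care is that the denominator $\skpr{x^\ast}{x}$ stays bounded away from zero on $\p_\eps\gamma(x)$, which is precisely what the standing hypothesis $\eps<\gamma(x)$ guarantees --- it is what makes the quotient map $F$ well defined and continuous and what legitimizes passing from $\alpha\skpr{x^\ast}{x}+\skpr{x^\ast}{y}=0$ to the explicit formula for $\alpha$. One could instead start from \hyperref[result:right-birkhoff-characterization]{Theorem~\ref*{result:right-birkhoff-characterization}}\ref{alpha-formula}, but then one would additionally have to verify connectedness of the set $\setcond{x^\ast}{\gamma^\circ(x^\ast)=1,\ \skpr{x^\ast}{x}\geq\gamma(x)-\eps}$; working with the convex set $\p_\eps\gamma(x)$ sidesteps this.
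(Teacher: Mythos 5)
Your proof is correct, and it rests on the same key fact as the paper's, namely the dual characterization $x\perp_B^\eps(\alpha x+y)\Longleftrightarrow\ex x^\ast\in\p_\eps\gamma(x):\skpr{x^\ast}{\alpha x+y}=0$ from \hyperref[result:birkhoff-orthogonality-annihilating-subgradient]{Corollary~\ref*{result:birkhoff-orthogonality-annihilating-subgradient}}\ref{annihilating-eps-subgradient}, leading to the same parametrization $\alpha=-\skpr{x^\ast}{y}/\skpr{x^\ast}{x}$. You diverge in how the three sub-claims are extracted from it. For non-emptiness and the interval structure you simply observe that $A$ is the continuous image of the non-empty, compact, connected set $\p_\eps\gamma(x)$ under the quotient map $F$; the paper instead proves non-emptiness by a separate separation-type argument (a line $\setcond{\alpha x+y}{\alpha\in\RR}$ that missed the hyperplane $\skpr{x^\ast}{\cdot}=0$ would force $\skpr{x^\ast}{x}=0$) and obtains convexity by first normalizing to the slice $\skpr{x^\ast}{x}=\gamma(x)-\eps$ (a rescaling that leaves the ratio invariant) so that $F$ becomes linear on a compact convex set. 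Your topological route is shorter and avoids having to justify that normalization step, at the cost of only yielding ``interval'' rather than exhibiting $A$ as a linear image of a convex body. For the bound you apply the Cauchy--Schwarz inequality \eqref{eq:cauchy-schwarz} to the dual formula, whereas the paper works primally, plugging $\lambda=-1/\alpha$ into the definition of $\perp_B^\eps$; both give the same estimate, and your version has the small advantage of reusing the representation you already have in hand. All steps check out, including the observation that $\skpr{x^\ast}{x}\geq\gamma(x)-\eps>0$ keeps $F$ well defined and continuous.
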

\begin{proof}
Take $x^\ast\in\p_\eps\gamma(x)\neq\emptyset$, i.e., $\gamma^\circ(x^\ast)\leq 1$ and $\skpr{x^\ast}{x}\geq\gamma(x)-\eps>0$. If $\skpr{x^\ast}{h}=0$, then $x\perp_B^\eps h$, see \hyperref[result:birkhoff-orthogonality-annihilating-subgradient]{Corollary~\ref*{result:birkhoff-orthogonality-annihilating-subgradient}}. Assume that, for all $\alpha\in\RR$, the vector $x$ is not $\eps$-Birkhoff orthogonal to $\alpha x+y$. In particular, the line $\setcond{\alpha x+y}{\alpha\in\RR}$ does not intersect the hyperplane $\setcond{h\in\RR^d}{\skpr{x^\ast}{h}=0}$. Consequently, $\skpr{x^\ast}{x}=0$, a contradiction.

By \hyperref[definition:orthogonality]{Definition~\ref*{definition:orthogonality}}, $x\perp_B^\eps(\alpha x+y)$ if and only if $\gamma(x)-\eps\leq\gamma(x+\lambda(\alpha x+y))$ for all $\lambda\in\RR$. In particular, for $\alpha\neq 0$ and $\lambda=-\frac{1}{\alpha}$, we obtain $\gamma(x)-\eps\leq\gamma\lr{-\frac{1}{\alpha}y}$. If $\alpha>0$, then $\gamma(x)-\eps\leq\frac{1}{\alpha}\gamma(-y)$. In case $\alpha<0$, we have $\gamma(x)-\eps\leq -\frac{1}{\alpha}\gamma(y)$. This yields $\abs{\alpha}\leq\max\setn{\frac{\gamma(y)}{\gamma(x)-\eps},\frac{\gamma(-y)}{\gamma(x)-\eps}}$ for $\alpha\neq 0$, which holds trivially for $\alpha=0$.

By \hyperref[result:birkhoff-orthogonality-annihilating-subgradient]{Corollary~\ref*{result:birkhoff-orthogonality-annihilating-subgradient}}, we have
\begin{align*}
\setcond{\alpha\in\RR}{x\perp_B^\eps(\alpha x+y)}&=\setcond{-\frac{\skpr{x^\ast}{y}}{\skpr{x^\ast}{x}}}{x^\ast\in\p_\eps\gamma(x)}\\
&=\setcond{-\frac{\skpr{x^\ast}{y}}{\skpr{x^\ast}{x}}}{x^\ast\in\p_\eps\gamma(x),\skpr{x^\ast}{x}=\gamma(x)-\eps}\\
&=\setcond{-\frac{\skpr{x^\ast}{y}}{\gamma(x)-\eps}}{x^\ast\in\p_\eps\gamma(x),\skpr{x^\ast}{x}=\gamma(x)-\eps}.
\end{align*}
Since $x^\ast\mapsto -\frac{\skpr{x^\ast}{y}}{\gamma(x)-\eps}$ is a linear function and $\setcond{x^\ast\in\p_\eps\gamma(x)}{\skpr{x^\ast}{x}=\gamma(x)-\eps}$ is a compact convex set, $\setcond{\alpha\in\RR}{x\perp_B^\eps(\alpha x+y)}$ is compact convex set, too. 
\end{proof}

A feature of gauges is their possible asymmetry. More precisely, we may think of $\gamma(x-y)$ as the distance from $y$ to $x$ which need not coincide with $\gamma(y-x)$. In this sense, the definition of Birkhoff orthogonality states that $x\perp_B^\eps y$ if and only if $x$ is \enquote{approximately closest} to $0$ among the points of the form $x+\lambda y$ with $\lambda\in\RR$. This kind of proximality is comprised in the notion of $\eps$-best approximation, which is naturally accompanied by the notion of $\eps$-best co-approximation. In normed spaces, the former has been introduced by Buck \cite{Buck1965}, the latter by Hasani, Mazaheri, and Vaezpour \cite{HasaniMaVa2007}, despite the fact that best co-approximations (for $\eps=0$) have already been investigated by Franchetti and Furi \cite{FranchettiFu1972}. 

\begin{Def}
Let $K$ be a non-empty closed convex subset of a generalized Minkowski space $(\RR^d,\gamma)$. A point $x\in K$ is called an \emph{$\eps$-best approximation of $y\in\RR^d$ in $K$} if $\gamma(x-y)\leq\gamma(z-y)+\eps$ for all $z\in K$.
A point $x\in K$ is called an \emph{$\eps$-best co-approximation of $y\in\RR^d$ in $K$} if $\gamma(x-z)\leq\gamma(y-z)+\eps$ for all $z\in K$. The sets of $\eps$-best approximations and $\eps$-best co-approximations of $y$ in $K$ shall be denoted by $P_K^\eps(y)$ and $R_K^\eps(y)$, respectively.
\end{Def}
The set $R_K^\eps(y)$ can be readily checked for closedness and convexity as
\begin{equation*}
R_K^\eps(y)=K\cap\lr{\bigcap_{z\in K}\mc{z}{\gamma(y-z)+\eps}}
\end{equation*}
is the intersection of closed convex sets.
\begin{Bsp}
For any $y\in\RR^d$, the set of $\eps$-best approximations of $y$ in $\RR^d$ and the set of $\eps$-best co-approximations of $y$ in $\RR^d$ coincide with $\mc{y}{\eps}$.
\end{Bsp}

The next result links $\eps$-Birkhoff orthogonality and $\eps$-best approximations in linear subspaces to $\eps$-subdifferentials. The proof follows the lines of \cite[Theorem~6.12]{Singer1970} which is the corresponding result for normed spaces, see also \cite[Lemma~1.1]{HaddadiMaSh2009}, \cite[Theorem~2.3]{HasaniMaVa2007}, and \cite[Theorem~2.5.1]{Cobzas2013}.
\begin{Prop}\label{result:birkhoff-best-approximation}
Let $U\subset\RR^d$ be a non-trivial linear subspace, $x,y\in\RR^d$, $y\notin U$, $x\in U$ and $\eps\geq 0$. Then $P_U^\eps(y)$ is non-empty, closed, and convex. Moreover, the following statements are equivalent:
\begin{enumerate}[label={(\alph*)},leftmargin=*,align=left,noitemsep]
\item{The point $x$ is an $\eps$-best approximation of $y$ in $U$.\label{best-approximation}}
\item{We have $(x-y)\perp_B^\eps z$ for all $z\in U$.\label{birkhoff-difference}}
\item{There exists $x^\ast\in\RR^d$ such that $\gamma^\circ(x^\ast)=1$, $\skpr{x^\ast}{u}=0$ for all $u\in U$, and $x^\ast\in\p_\eps\gamma(x-y)$.\label{dual-characterization}}
\end{enumerate}
\end{Prop}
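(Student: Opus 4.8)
The plan is to dispose of the regularity of $P_U^\eps(y)$ and of the equivalence \ref{best-approximation}$\Longleftrightarrow$\ref{birkhoff-difference} as essentially formal preliminaries, and then to do the real work on \ref{birkhoff-difference}$\Longleftrightarrow$\ref{dual-characterization}. Writing $d\defeq\inf_{z\in U}\gamma(z-y)$, one has $P_U^\eps(y)=U\cap\setcond{z\in\RR^d}{\gamma(z-y)\leq d+\eps}$, an intersection of two closed convex sets (the second being a sublevel set of the continuous convex function $z\mapsto\gamma(z-y)$), hence closed and convex; non-emptiness follows by minimizing the convex, lower semicontinuous, coercive function $z\mapsto\gamma(z-y)+\iota_U(z)$ — its sublevel sets sit inside the bounded sets $y+\alpha\mc{0}{1}$ — via \cite[Theorem~11.9]{BauschkeCo2011}, the minimizer lying in $P_U^0(y)\subset P_U^\eps(y)$. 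For \ref{best-approximation}$\Longleftrightarrow$\ref{birkhoff-difference}, substitute $z=x+u$: since $x\in U$ and $U$ is a subspace, $z$ runs through $U$ precisely when $u$ does, and $z-y=(x-y)+u$, so \ref{best-approximation} reads $\gamma(x-y)\leq\gamma((x-y)+u)+\eps$ for all $u\in U$; as $\setcond{\lambda z}{\lambda\in\RR,\,z\in U}=U$, \hyperref[definition:orthogonality]{Definition~\ref*{definition:orthogonality}} shows this is exactly \ref{birkhoff-difference}. Put $v\defeq x-y$, which is nonzero because $x\in U$ and $y\notin U$, so \eqref{eq:subdifferential-gauge} applies to $\p_\eps\gamma(v)$ in its second branch.

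The implication \ref{dual-characterization}$\Longrightarrow$\ref{birkhoff-difference} is immediate: with $x^\ast$ as in \ref{dual-characterization}, the Cauchy--Schwarz inequality \eqref{eq:cauchy-schwarz}, $\gamma^\circ(x^\ast)=1$, $\skpr{x^\ast}{u}=0$ on $U$, and $\skpr{x^\ast}{v}\geq\gamma(v)-\eps$ (the content of $x^\ast\in\p_\eps\gamma(v)$, by \eqref{eq:subdifferential-gauge}) give $\gamma(v+z)\geq\skpr{x^\ast}{v+z}=\skpr{x^\ast}{v}\geq\gamma(v)-\eps$ for every $z\in U$. For the converse \ref{birkhoff-difference}$\Longrightarrow$\ref{dual-characterization}, the idea (following \cite[Theorem~6.12]{Singer1970}) is to build the functional by separation: \ref{birkhoff-difference} says the affine flat $v+U$ is disjoint from $O\defeq\setcond{w\in\RR^d}{\gamma(w)<\gamma(v)-\eps}$. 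If $\eps<\gamma(v)$, then $O$ is nonempty, open, and convex, so a separation theorem yields $x^\ast\neq 0$ with $\skpr{x^\ast}{\cdot}$ bounded above on $O$ by its infimum over $v+U$; boundedness below on a flat forces $x^\ast\in U^\perp$, and since $\gamma^\circ=h_{\mc{0}{1}}$ one gets $\sup_{w\in O}\skpr{x^\ast}{w}=(\gamma(v)-\eps)\gamma^\circ(x^\ast)$, whence $(\gamma(v)-\eps)\gamma^\circ(x^\ast)\leq\skpr{x^\ast}{v}$. Dividing by $\gamma^\circ(x^\ast)>0$ (positive, as $\gamma^\circ$ is a gauge and $x^\ast\neq 0$; a positive rescaling keeps $x^\ast\in U^\perp$), we may assume $\gamma^\circ(x^\ast)=1$ and $\skpr{x^\ast}{v}\geq\gamma(v)-\eps$, i.e.\ $x^\ast\in\p_\eps\gamma(v)$ by \eqref{eq:subdifferential-gauge}, which is \ref{dual-characterization}. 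In the degenerate case $\eps\geq\gamma(v)$, \ref{birkhoff-difference} holds trivially; for \ref{dual-characterization} pick $w\in U^\perp\setminus\setn{0}$ (available because $y\notin U$ forces $U\neq\RR^d$), flip its sign if needed so that $\skpr{w}{v}\geq 0$ (this matters since $\gamma^\circ$ need not be symmetric), and set $x^\ast\defeq w/\gamma^\circ(w)$; this vector lies in $U^\perp$, has $\gamma^\circ(x^\ast)=1$, and satisfies $\skpr{x^\ast}{v}\geq 0\geq\gamma(v)-\eps$, hence belongs to $\p_\eps\gamma(v)$ by \eqref{eq:subdifferential-gauge}.

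The main obstacle is exactly \ref{birkhoff-difference}$\Longrightarrow$\ref{dual-characterization}: \hyperref[result:birkhoff-orthogonality-annihilating-subgradient]{Corollary~\ref*{result:birkhoff-orthogonality-annihilating-subgradient}} applied direction by direction only produces, for each $z\in U$, \emph{some} $x^\ast\in\p_\eps\gamma(v)$ annihilating $z$, whereas \ref{dual-characterization} demands a single $x^\ast$ annihilating all of $U$ at once and having polar norm exactly $1$. Passing from the former to the latter is what forces the separation argument (equivalently, an $\eps$-subdifferential sum rule for $\gamma+\iota_{v+U}$ together with $\p_{\eps_2}\iota_{v+U}(v)=U^\perp$), and the exact normalization $\gamma^\circ(x^\ast)=1$ in the presence of a possibly asymmetric $\gamma^\circ$ is what necessitates the case distinction and the sign choice above.
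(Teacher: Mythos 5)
Your proof is correct. It runs on the same engine as the paper's---a Hahn--Banach separation that manufactures the annihilating functional, normalized through $\gamma^\circ=h_{\mc{0}{1}}$---but the execution differs enough to be worth recording. The paper proves \ref{best-approximation}$\Longrightarrow$\ref{dual-characterization} by separating $U$ from the closed ball $\mc{y}{\varrho}$ with $\varrho=\inf_{z\in U}\gamma(z-y)$, anchoring the functional at a $0$-best approximation $x_0$ (so that $\skpr{x^\ast}{x_0-y}=\gamma(x_0-y)$ via the $\eps=0$ version of the Cauchy--Schwarz lemma, \hyperref[result:birkhoff-orthogonality-cauchy-schwarz-inequality-zero]{Lemma~\ref*{result:birkhoff-orthogonality-cauchy-schwarz-inequality-zero}}) and only then transferring the $\eps$-subgradient inequality to $x-y$ through $\gamma(x-y)\leq\varrho+\eps$; since neither the separation nor the choice of $x^\ast$ involves $\eps$, this handles all $\eps\geq 0$ in one stroke. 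You instead prove \ref{birkhoff-difference}$\Longrightarrow$\ref{dual-characterization} directly, separating the flat $(x-y)+U$ from the open, $\eps$-dependent set $\setcond{w\in\RR^d}{\gamma(w)<\gamma(x-y)-\eps}$ and reading the inequality $\skpr{x^\ast}{x-y}\geq\gamma(x-y)-\eps$ off the support-function identity. This buys you a proof that never mentions the exact best approximation $x_0$, at the price of the degenerate case $\eps\geq\gamma(x-y)$, where your separating set is empty and the functional must be produced by hand---which you do correctly, including the sign flip needed because $\gamma^\circ$ is in general asymmetric. The remaining pieces (regularity of $P_U^\eps(y)$, the translation argument for \ref{best-approximation}$\Longleftrightarrow$\ref{birkhoff-difference}, and the Cauchy--Schwarz computation for \ref{dual-characterization}$\Longrightarrow$\ref{birkhoff-difference}) coincide with the paper's.
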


We prepare the proof of \hyperref[result:birkhoff-best-approximation]{Proposition~\ref*{result:birkhoff-best-approximation}} by giving a geometrical description of the $\eps$-subdifferential of a gauge whose non-relaxed analog for normed spaces is presented in \cite[Theorem~2.1]{James1947a}.
\begin{Lem}\label{result:birkhoff-orthogonality-cauchy-schwarz-inequality}
Let $x,x^\ast\in\RR^d$, $\gamma^\circ(x^\ast)=1$, and $0\leq\eps<\gamma(x)$. The following statements are equivalent:
\begin{enumerate}[label={(\alph*)},leftmargin=*,align=left,noitemsep]
\item{$x^\ast\in\p_\eps\gamma(x)$;\label{eps-subgradient}}
\item{$\skpr{x^\ast}{x}\geq\gamma(x)-\eps$;\label{cauchy-schwarz-eps-equality}}
\item{$\skpr{x^\ast}{x}>0$, $x\perp_B^\eps h$ for all $h\in\RR^d$ with $\skpr{x^\ast}{h}=0$;\label{birkhoff-orthogonality-eps-hyperplane}}
\item{$\sup\setcond{\skpr{x^\ast}{z}}{z\in\RR^d,\gamma(z)\leq\gamma(x)-\eps}\leq\skpr{x^\ast}{x}$.\label{linear-functional-eps-maximum}}
\end{enumerate}
\end{Lem}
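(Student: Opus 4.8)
The plan is to use statement \ref{cauchy-schwarz-eps-equality} as the hub and prove \ref{eps-subgradient}$\Leftrightarrow$\ref{cauchy-schwarz-eps-equality}, \ref{cauchy-schwarz-eps-equality}$\Leftrightarrow$\ref{linear-functional-eps-maximum}, and \ref{cauchy-schwarz-eps-equality}$\Leftrightarrow$\ref{birkhoff-orthogonality-eps-hyperplane}. The first observation is that $x\neq 0$, since $\gamma(x)>\eps\geq 0$, so the non-trivial branch of \eqref{eq:subdifferential-gauge} applies. From this, \ref{eps-subgradient}$\Leftrightarrow$\ref{cauchy-schwarz-eps-equality} is immediate: the hypothesis $\gamma^\circ(x^\ast)=1$ already ensures the condition $\gamma^\circ(x^\ast)\leq 1$ in \eqref{eq:subdifferential-gauge}, so $x^\ast\in\p_\eps\gamma(x)$ reduces precisely to $\skpr{x^\ast}{x}\geq\gamma(x)-\eps$. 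For \ref{cauchy-schwarz-eps-equality}$\Leftrightarrow$\ref{linear-functional-eps-maximum}, I would use that $\gamma^\circ$ is the support function $h_{\mc{0}{1}}$ together with positive homogeneity of $\gamma$ and $\gamma(x)-\eps>0$ to compute
\begin{equation*}
\sup\setcond{\skpr{x^\ast}{z}}{z\in\RR^d,\ \gamma(z)\leq\gamma(x)-\eps}=(\gamma(x)-\eps)\,\gamma^\circ(x^\ast)=\gamma(x)-\eps,
\end{equation*}
so that \ref{linear-functional-eps-maximum} reads $\gamma(x)-\eps\leq\skpr{x^\ast}{x}$, which is \ref{cauchy-schwarz-eps-equality}.

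The implication \ref{cauchy-schwarz-eps-equality}$\Rightarrow$\ref{birkhoff-orthogonality-eps-hyperplane} is short: $\skpr{x^\ast}{x}\geq\gamma(x)-\eps>0$ gives positivity, and since \ref{cauchy-schwarz-eps-equality}$\Leftrightarrow$\ref{eps-subgradient} we have $x^\ast\in\p_\eps\gamma(x)$; hence for every $h\in\RR^d$ with $\skpr{x^\ast}{h}=0$, the vector $x^\ast$ itself is the subgradient required by \hyperref[result:birkhoff-orthogonality-annihilating-subgradient]{Corollary~\ref*{result:birkhoff-orthogonality-annihilating-subgradient}}\ref{annihilating-eps-subgradient}, yielding $x\perp_B^\eps h$.

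The converse \ref{birkhoff-orthogonality-eps-hyperplane}$\Rightarrow$\ref{cauchy-schwarz-eps-equality} is the geometric core and the step I expect to be most delicate, since it requires producing a witness for the failure of orthogonality. I would argue by contradiction: assume $\skpr{x^\ast}{x}<\gamma(x)-\eps$. By the supremum computed above and compactness of the unit ball, there is $z_0\in\RR^d$ with $\gamma(z_0)\leq\gamma(x)-\eps$ and $\skpr{x^\ast}{z_0}=\gamma(x)-\eps>\skpr{x^\ast}{x}>0=\skpr{x^\ast}{0}$. Setting $t\defeq\skpr{x^\ast}{x}/\skpr{x^\ast}{z_0}\in(0,1)$ and $w\defeq tz_0$, positive homogeneity gives $\skpr{x^\ast}{w}=\skpr{x^\ast}{x}$ and $\gamma(w)=t\gamma(z_0)\leq t(\gamma(x)-\eps)<\gamma(x)-\eps$. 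Then $h\defeq w-x$ satisfies $\skpr{x^\ast}{h}=0$, yet $\gamma(x+h)=\gamma(w)<\gamma(x)-\eps$, so $\gamma(x)>\gamma(x+1\cdot h)+\eps$, contradicting $x\perp_B^\eps h$. Therefore \ref{cauchy-schwarz-eps-equality} holds, and all four statements are equivalent. The only subtlety to watch is the consistent use of $\gamma(x)-\eps>0$ (guaranteed by $\eps<\gamma(x)$), which makes both the scaling of the ball in the supremum computation and the strict inequality $\gamma(w)<\gamma(x)-\eps$ legitimate.
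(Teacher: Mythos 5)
Your proof is correct, and it coincides with the paper's on the easy spokes: \ref{eps-subgradient}$\Leftrightarrow$\ref{cauchy-schwarz-eps-equality} via \eqref{eq:subdifferential-gauge} and \ref{cauchy-schwarz-eps-equality}$\Leftrightarrow$\ref{linear-functional-eps-maximum} via $\gamma^\circ=h_{\mc{0}{1}}$ are verbatim the same. Where you diverge is the equivalence with \ref{birkhoff-orthogonality-eps-hyperplane}. For \ref{cauchy-schwarz-eps-equality}$\Rightarrow$\ref{birkhoff-orthogonality-eps-hyperplane} the paper argues directly from the $\eps$-subgradient inequality $\gamma(y)+\eps\geq\gamma(x)+\skpr{x^\ast}{y-x}$ with $y=x+\lambda h$, whereas you invoke \hyperref[result:birkhoff-orthogonality-annihilating-subgradient]{Corollary~\ref*{result:birkhoff-orthogonality-annihilating-subgradient}}\ref{annihilating-eps-subgradient}; that corollary is established earlier (via \hyperref[result:right-birkhoff-characterization]{Theorem~\ref*{result:right-birkhoff-characterization}}) and does not rely on the present lemma, so there is no circularity, though your route imports the heavier machinery of the chain rule for $\eps$-subdifferentials where a one-line computation suffices. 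For the core implication \ref{birkhoff-orthogonality-eps-hyperplane}$\Rightarrow$\ref{cauchy-schwarz-eps-equality} the paper gives a direct estimate: it decomposes an arbitrary $y$ as $\lambda x+h$ with $\skpr{x^\ast}{h}=0$ and shows $\skpr{x^\ast}{y}\leq\mu\gamma(y)$ with $\mu=\skpr{x^\ast}{x}/(\gamma(x)-\eps)$, whence $1=\gamma^\circ(x^\ast)\leq\mu$. You instead argue by contradiction, using compactness of the sublevel set $\mc{0}{\gamma(x)-\eps}$ to produce a maximizer $z_0$ of $\skpr{x^\ast}{\cdot}$, rescaling it to $w$ on the level $\skpr{x^\ast}{w}=\skpr{x^\ast}{x}$, and exhibiting $h=w-x$ as an explicit violation of $x\perp_B^\eps h$ at $\lambda=1$. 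Both arguments are sound and of comparable length; yours is more geometric and makes visible \emph{which} direction in the hyperplane destroys approximate orthogonality, while the paper's avoids any compactness or attainment argument and reads off the conclusion from the definition of the polar gauge. All the delicate points you flag ($\gamma(x)-\eps>0$, $\skpr{x^\ast}{x}>0$, $t\in(0,1)$) are handled correctly.
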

\begin{proof}
\ref{eps-subgradient}$\Longleftrightarrow$\ref{cauchy-schwarz-eps-equality}: See \hyperref[eq:subdifferential-gauge]{Equation~(\ref*{eq:subdifferential-gauge})}.

\ref{eps-subgradient}$\Longrightarrow$\ref{birkhoff-orthogonality-eps-hyperplane}: We have
\begin{equation}
\gamma(y)+\eps\geq\gamma(x)+\skpr{x^\ast}{y-x}\label{eq:eps-subgradient}
\end{equation}
for all $y\in\RR^d$. If $y$ admits a representation $y=x+h$ with $h\in\RR^d$, $\skpr{x^\ast}{h}=0$, then \eqref{eq:eps-subgradient} becomes
\begin{equation*}
\gamma(x+h)+\eps\geq\gamma(x)+\skpr{x^\ast}{h}=\gamma(x),
\end{equation*}
that is, $x\perp_B^\eps h$. By \ref{cauchy-schwarz-eps-equality}, we also have $\skpr{x^\ast}{x}\geq\gamma(x)-\eps>0$.

\ref{birkhoff-orthogonality-eps-hyperplane}$\Longrightarrow$\ref{cauchy-schwarz-eps-equality}: Set $\mu\defeq\frac{\skpr{x^\ast}{x}}{\gamma(x)-\eps}>0$. For every $y\in\RR^d$ there are a number $\lambda\in\RR$ and a point $h\in\RR^d$ such that $\skpr{x^\ast}{h}=0$ and $y=\lambda x+h$. If $\lambda<0$, then $\skpr{x^\ast}{y}=\lambda\skpr{x^\ast}{x}<0<\mu\gamma(y)$. If $\lambda\geq 0$, we obtain
\begin{equation*}
\skpr{x^\ast}{y}=\lambda\skpr{x^\ast}{x}=\lambda\mu(\gamma(x)-\eps)\leq\mu\gamma(\lambda x+h)=\mu\gamma(y).
\end{equation*}
Thus $1=\gamma^\circ(x^\ast)\leq\mu$, which is equivalent to $\skpr{x^\ast}{x}\geq\gamma(x)-\eps$.

\ref{linear-functional-eps-maximum}$\Longleftrightarrow$\ref{cauchy-schwarz-eps-equality}: This follows from the identity
\begin{equation*}
\sup_{z\in\mc{0}{\gamma(x)-\eps}}\skpr{x^\ast}{z}=\gamma^\circ(x^\ast)(\gamma(x)-\eps)=\gamma(x)-\eps,
\end{equation*}
which is a consequence of $\gamma^\circ=h_{\mc{0}{1}}$.
\end{proof}

In case of $\eps=0$, the previous lemma can be slightly improved to
\begin{Lem}\label{result:birkhoff-orthogonality-cauchy-schwarz-inequality-zero}
Let $x,x^\ast\in\RR^d$, $x^\ast\neq 0$. The following statements are equivalent:
\begin{enumerate}[label={(\alph*)},leftmargin=*,align=left,noitemsep]
\item{$\frac{x^\ast}{\gamma^\circ(x^\ast)}\in\p\gamma(x)$;}
\item{$\skpr{x^\ast}{x}=\gamma^\circ(x^\ast)\gamma(x)$;}
\item{$\skpr{x^\ast}{x}\geq 0$, $x\perp_B h$ for all $h\in\RR^d$ with $\skpr{x^\ast}{h}=0$;}
\item{$\skpr{x^\ast}{\cdot}$ attains its maximum on $\mc{0}{\gamma(x)}$ at $x$.}
\end{enumerate}
\end{Lem}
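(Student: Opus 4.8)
The plan is to reduce the statement to \hyperref[result:birkhoff-orthogonality-cauchy-schwarz-inequality]{Lemma~\ref*{result:birkhoff-orthogonality-cauchy-schwarz-inequality}} with $\eps=0$ by means of the normalization $y^\ast\defeq x^\ast/\gamma^\circ(x^\ast)$, which is legitimate since $x^\ast\neq 0$ forces $\gamma^\circ(x^\ast)>0$ (property (a) of the gauge $\gamma^\circ$) and yields $\gamma^\circ(y^\ast)=1$. Before doing that, however, one has to dispose of the degenerate case $x=0$, which falls outside the hypothesis $\eps<\gamma(x)$ of the earlier lemma.

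First I would treat $x=0$. Here $\gamma(x)=0$ and $\mc{0}{\gamma(x)}=\setn{0}$, so (d) holds trivially, (b) reads $0=0$, and (c) holds because $\skpr{x^\ast}{0}=0\geq 0$ and $0\perp_B h$ for every $h\in\RR^d$ (indeed $\gamma(0)=0\leq\gamma(\lambda h)$ for all $\lambda$); finally (a) holds because $\p\gamma(0)=\mc{0}{1}^\circ$ by \eqref{eq:subdifferential-gauge}, while $\gamma^\circ(y^\ast)=1$. Thus all four statements are true, hence equivalent, when $x=0$.

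For $x\neq 0$, so that $0=\eps<\gamma(x)$, I would apply \hyperref[result:birkhoff-orthogonality-cauchy-schwarz-inequality]{Lemma~\ref*{result:birkhoff-orthogonality-cauchy-schwarz-inequality}} to $y^\ast$ with $\eps=0$ and translate each of its four items back via the positive scalar $\gamma^\circ(x^\ast)$. Item (a) of the earlier lemma, $y^\ast\in\p\gamma(x)$, is verbatim item (a) here. Dividing the Cauchy--Schwarz inequality \eqref{eq:cauchy-schwarz} by $\gamma^\circ(x^\ast)$ gives $\skpr{y^\ast}{x}\leq\gamma(x)$ always, so the earlier ``$\skpr{y^\ast}{x}\geq\gamma(x)$'' becomes the equality $\skpr{y^\ast}{x}=\gamma(x)$, which after multiplying by $\gamma^\circ(x^\ast)$ is (b). The condition $\skpr{y^\ast}{h}=0$ coincides with $\skpr{x^\ast}{h}=0$, and multiplying the supremum in item (d) of the earlier lemma by $\gamma^\circ(x^\ast)>0$ yields (d) here.

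The one genuinely non-cosmetic point, and the step I expect to be the main obstacle, is the mismatch between the strict inequality $\skpr{y^\ast}{x}>0$ demanded in item (c) of \hyperref[result:birkhoff-orthogonality-cauchy-schwarz-inequality]{Lemma~\ref*{result:birkhoff-orthogonality-cauchy-schwarz-inequality}} and the merely non-strict $\skpr{x^\ast}{x}\geq 0$ in (c) here. I would close this gap using nondegeneracy: if $x\perp_B h$ held for every $h$ with $\skpr{x^\ast}{h}=0$ and in addition $\skpr{x^\ast}{x}=0$, then $x$ itself would lie in that hyperplane, giving $x\perp_B x$; but $x\perp_B x$ means $\gamma(x)\leq\gamma(x+\lambda x)$ for all $\lambda\in\RR$, and $\lambda=-1$ forces $\gamma(x)=0$, i.e.\ $x=0$, contradicting $x\neq 0$. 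Hence under $x\neq 0$ the inequality $\skpr{x^\ast}{x}\geq 0$ in (c) is automatically strict, so (c) and the earlier item (c) match. Combining the two cases completes the argument.
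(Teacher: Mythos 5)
Your proposal is correct and follows exactly the route the paper intends: the paper states this lemma only as a ``slight improvement'' of \hyperref[result:birkhoff-orthogonality-cauchy-schwarz-inequality]{Lemma~\ref*{result:birkhoff-orthogonality-cauchy-schwarz-inequality}} for $\eps=0$ and gives no explicit proof, and your normalization $y^\ast=x^\ast/\gamma^\circ(x^\ast)$ together with the Cauchy--Schwarz upgrade of ``$\geq$'' to ``$=$'' in (b) is precisely that reduction. Your explicit treatment of the degenerate case $x=0$ and of the strict-versus-nonstrict inequality in (c) (via $x\perp_B x\Rightarrow x=0$) supplies the details the paper leaves implicit, and both steps check out.
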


The nontriviality of \hyperref[result:birkhoff-orthogonality-cauchy-schwarz-inequality]{Lemma~\ref*{result:birkhoff-orthogonality-cauchy-schwarz-inequality}} is a consequence of the nonemptiness of the $\eps$-sub\-differential of $\gamma$, see \cite[Theorem~2.4.9]{Zalinescu2002}.
\begin{Lem}[{\cite[Theorem~2.2]{James1947a}}]\label{result:birkhoff-orthogonality-eps-hyperplane-nontriviality}
Let $x\in\RR^d$. Then there exists a vector $x^\ast\in\RR^d\setminus\setn{0}$ such that $x\perp_B^\eps h$ whenever $\skpr{x^\ast}{h}=0$.
\end{Lem}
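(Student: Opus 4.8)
The plan is to read off the required vector directly from the $\eps$-subdifferential of $\gamma$, exploiting the nonemptiness of $\p_\eps\gamma(x)$ as hinted just before the statement. Concretely, I expect that \emph{any} nonzero $\eps$-subgradient $x^\ast\in\p_\eps\gamma(x)$ already has the desired property, so the only real work is to produce such a nonzero element and then substitute it into the defining inequality of $\p_\eps$.

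First I would note that $\p_\eps\gamma(x)$ is non-empty and contains $\p\gamma(x)$, and argue that it contains a vector distinct from $0$. For $x\neq 0$, \eqref{eq:subdifferential-gauge} shows that every $x^\ast\in\p\gamma(x)\subset\p_\eps\gamma(x)$ satisfies $\skpr{x^\ast}{x}\geq\gamma(x)>0$, hence is nonzero. For $x=0$, \eqref{eq:subdifferential-gauge} gives $\p_\eps\gamma(0)=\mc{0}{1}^\circ$, which is the unit ball of the gauge $\gamma^\circ$ and therefore has the origin as an interior point, so it certainly contains nonzero vectors. (Alternatively, the case $x=0$ is trivial in its own right, since then $\gamma(x)=0\leq\gamma(x+\lambda h)+\eps$ for every $h$ and every $\lambda$, so any fixed nonzero $x^\ast$ works.)

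Having fixed a nonzero $x^\ast\in\p_\eps\gamma(x)$, I would finish with a one-line computation: for $h\in\RR^d$ with $\skpr{x^\ast}{h}=0$ and $\lambda\in\RR$ arbitrary, inserting $y=x+\lambda h$ into the inequality $\skpr{x^\ast}{y-x}\leq\gamma(y)-\gamma(x)+\eps$ collapses the left-hand side to $\lambda\skpr{x^\ast}{h}=0$, hence $\gamma(x)\leq\gamma(x+\lambda h)+\eps$, i.e., $x\perp_B^\eps h$ by \hyperref[definition:orthogonality]{Definition~\ref*{definition:orthogonality}}. I do not anticipate a genuine obstacle here; the only delicate point is the case distinction needed to guarantee that $\p_\eps\gamma(x)$ is not reduced to $\setn{0}$, which \eqref{eq:subdifferential-gauge} resolves immediately.
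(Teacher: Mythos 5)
Your proposal is correct and follows essentially the same route as the paper, which derives the lemma from the nonemptiness of $\p_\eps\gamma(x)$ together with the implication \ref{eps-subgradient}$\Longrightarrow$\ref{birkhoff-orthogonality-eps-hyperplane} of \hyperref[result:birkhoff-orthogonality-cauchy-schwarz-inequality]{Lemma~\ref*{result:birkhoff-orthogonality-cauchy-schwarz-inequality}}; you merely inline that one-line subgradient computation and add the (correct and welcome) case distinction guaranteeing a nonzero element of $\p_\eps\gamma(x)$ when $x=0$ or $\eps\geq\gamma(x)$, cases which the paper's reduction to that lemma leaves implicit.
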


\begin{myproof}
Apply \cite[Theorem~11.9]{BauschkeCo2011} to the convex, hence continuous, and coercive function $\gamma(\cdot-y):U\to\RR$ and conclude that $P_U^0(y)$ is non-empty. Therefore, the set $P_U^\eps(y)$ is also non-empty and, being a sublevel set of $\gamma(\cdot-y):U\to\RR$, compact and convex.

\ref{best-approximation}$\Longleftrightarrow$\ref{birkhoff-difference}: Use $x+\lambda U=U$.

\ref{best-approximation}$\Longrightarrow$\ref{dual-characterization}: We have $\varrho\defeq\inf_{z\in U}\gamma(z-y)>0$.
Using the Hahn--Banach theorem, there exists a hyperplane $H$ which separates $U$ and $\mc{y}{\varrho}$. As $U\cap\mc{y}{\varrho}=P_U^0(y)$ is non-empty, $H$ contains $U$ and is therefore a linear subspace itself.
Thus there exists $x^\ast\in\RR^d$ such that $H=\setcond{z\in\RR^d}{\skpr{x^\ast}{z}=0}$.
Moreover, $H$ is a supporting hyperplane of $\mc{y}{\varrho}$.
Hence we may choose $x^\ast$ such that $\mc{y}{\varrho}\subset H^-\defeq\setcond{z\in\RR^d}{\skpr{x^\ast}{z}\leq 0}$.
We conclude that $\mc{0}{\varrho}\subset H^--y$. Choosing $x_0\in P_U^0(y)$ and applying \hyperref[result:birkhoff-orthogonality-cauchy-schwarz-inequality-zero]{Lemma~\ref*{result:birkhoff-orthogonality-cauchy-schwarz-inequality-zero}}, the linear functional $\skpr{x^\ast}{\cdot}$ restricted to $\mc{0}{\varrho}$ attains its minimum value $\skpr{x^\ast}{-y}$ at $x_0-y$, so $\frac{x^\ast}{\gamma^\circ(x^\ast)}\in\p\gamma(x_0-y)$. This means that we may choose $x^\ast$ such that $\gamma^\circ(x^\ast)=1$ and $\skpr{x^\ast}{x_0-y}=\gamma(x_0-y)$.

Since $x\in P_U^\eps(y)$, we have
\begin{equation*}
\gamma(x-y)\leq\varrho+\eps=\gamma(x_0-y)+\eps=\skpr{x^\ast}{x_0-y}+\eps=\skpr{x^\ast}{x-y}+\eps.
\end{equation*}

\ref{dual-characterization}$\Longrightarrow$\ref{best-approximation}: By virtue of \eqref{eq:cauchy-schwarz} and \eqref{eq:subdifferential-gauge}, we have
\begin{equation*}
\gamma(x-y)\leq\skpr{x^\ast}{x-y}+\eps=\skpr{x^\ast}{u-y}+\eps\leq\gamma(u-y)+\eps
\end{equation*}
for all $u\in U$.
\end{myproof}

In contrast to that, there are sufficient conditions for $\eps$-best co-approximations of points in linear subspaces in terms of $\eps$-Birkhoff orthogonality and $\eps$-subdifferentials which need not be necessary ones.
Closely related results in finite-dimensional normed spaces are, for instance, \cite[p.~1046, (1)]{FranchettiFu1972}, \cite[Proposition~2.1]{PapiniSi1979}, and \cite[Theorems~2.3,~2.6,~2.10]{HasaniMaVa2007}.
\begin{Prop}\label{result:birkhoff-best-coapproximation}
Let $U\subset\RR^d$ be a non-trivial linear subspace, $x,y\in\RR^d$, $y\notin U$, $x\in U$ and $\eps\geq 0$. Then each of the following three equivalent statements
\begin{enumerate}[label={(\alph*)},leftmargin=*,align=left,noitemsep,series=coapprox]
\item{We have $z\perp_B^\eps(y-x)$ for all $z\in U$.\label{birkhoff-codifference}}
\item{For all $z\in U$, $x$ is an $\eps$-best approximation of $z$ in $\strline{x}{y}$.\label{reverse-best-approximation}}
\item{For $z\in U$, there exists $x^\ast\in\RR^d$ such that $\gamma^\circ(x^\ast)=1$, $\skpr{x^\ast}{u}=0$ for all $u\in U$, and $x^\ast\in\p_\eps\gamma(y-x)$.\label{dual-cocharacterization}}
\end{enumerate}
implies that
\begin{enumerate}[label={(\alph*)},leftmargin=*,align=left,noitemsep,coapprox]
\item{The point $x$ is an $\eps$-best co-approximation of $y$ in $U$.\label{best-coapproximation}}
\end{enumerate}
\end{Prop}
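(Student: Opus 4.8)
The plan is to establish the equivalence of (a), (b), (c) by translating all three into statements about $\eps$-best approximation in the one-dimensional subspace $V\defeq\lin\{y-x\}$, to which \hyperref[result:birkhoff-best-approximation]{Proposition~\ref*{result:birkhoff-best-approximation}} applies verbatim, and then to read off (d) from (b) in a single line. First I would record the two elementary facts underlying the translation: since $x\in U$ but $y\notin U$ we have $y-x\neq 0$, so $V$ is a non-trivial linear subspace; and for $z\in U$ the vector $z-x$ lies in $V$ if and only if $z=x$, because $z-x=\mu(y-x)$ with $\mu\neq 0$ would force $y-x\in U$.

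Next, for a fixed $z\in U$, I would observe that translating the line $\strline{x}{y}$ by $-x$ identifies it with $V$, sends $x$ to $0\in V$, and sends $z$ to $z-x$; hence $x$ is an $\eps$-best approximation of $z$ in $\strline{x}{y}$ if and only if $0$ is an $\eps$-best approximation of $z-x$ in $V$. For $z\neq x$ I would then invoke \hyperref[result:birkhoff-best-approximation]{Proposition~\ref*{result:birkhoff-best-approximation}} with the subspace $V$, the point $z-x\notin V$, and the approximating point $0\in V$: it yields that this property is equivalent both to $(x-z)\perp_B^\eps w$ for all $w\in V$ and to the existence of $x^\ast\in\RR^d$ with $\gamma^\circ(x^\ast)=1$, $\skpr{x^\ast}{w}=0$ for all $w\in V$, and $x^\ast\in\p_\eps\gamma(x-z)$. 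Since $\lambda$ in \hyperref[definition:orthogonality]{Definition~\ref*{definition:orthogonality}} already runs over all of $\RR$, the first condition is nothing but $(x-z)\perp_B^\eps(y-x)$, and $\skpr{x^\ast}{w}=0$ for all $w\in V$ just says $\skpr{x^\ast}{y-x}=0$; the remaining case $z=x$ makes every clause trivially true. Quantifying over $z\in U$ and using that $z\mapsto x-z$ is a bijection of $U$ onto itself, \enquote{$(x-z)\perp_B^\eps(y-x)$ for all $z\in U$} becomes (a), and collecting the dual clauses over $z$ produces (c); this gives (a)$\Leftrightarrow$(b)$\Leftrightarrow$(c).

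For the final implication I would simply assume (b), fix $z\in U$, and feed the competitor $y\in\strline{x}{y}$ into the definition of an $\eps$-best approximation of $z$ in $\strline{x}{y}$; this yields $\gamma(x-z)\leq\gamma(y-z)+\eps$, and since $z$ was arbitrary it says precisely that $x$ is an $\eps$-best co-approximation of $y$ in $U$, which is (d).

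The only non-formal ingredient is \hyperref[result:birkhoff-best-approximation]{Proposition~\ref*{result:birkhoff-best-approximation}}; the rest is bookkeeping. The step I expect to need the most care is the \emph{orientation} of $\eps$-Birkhoff orthogonality: the translation must keep the varying vector on the \emph{left}, so that (a) appears in the shape \enquote{$p\perp_B^\eps(y-x)$ for all $p\in U$}; were it the other way round one would recover the best approximation of $y$ in $U$ rather than the best co-approximation, and since $\gamma$ may be asymmetric these are genuinely different. The only other thing to watch is the excluded value $z=x$, where the hypothesis of \hyperref[result:birkhoff-best-approximation]{Proposition~\ref*{result:birkhoff-best-approximation}} that the target not lie in the subspace fails; that case has to be isolated and checked by hand, but there every clause is immediate.
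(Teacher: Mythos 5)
Your translation of \ref{reverse-best-approximation} into ``$0$ is an $\eps$-best approximation of $z-x$ in $V=\lin\setn{y-x}$'', the resulting equivalence \ref{birkhoff-codifference}$\Longleftrightarrow$\ref{reverse-best-approximation} via the bijection $z\mapsto x-z$ of $U$, and the one-line deduction of \ref{best-coapproximation} by testing the competitor $y$ are all correct and match the paper, which performs the same substitution directly on the defining inequalities and likewise chooses $w=y$ at the end. Your attention to the orientation of $\perp_B^\eps$ and to the excluded case $z=x$ is exactly right.

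The gap is the sentence ``collecting the dual clauses over $z$ produces (c)''. What \hyperref[result:birkhoff-best-approximation]{Proposition~\ref*{result:birkhoff-best-approximation}} actually hands you is, for every $z\in U\setminus\setn{x}$, a functional $x^\ast_z$ with $\gamma^\circ(x^\ast_z)=1$ that vanishes only on $V=\lin\setn{y-x}$ and satisfies $x^\ast_z\in\p_\eps\gamma(x-z)$, a subdifferential that moves with $z$. Statement \ref{dual-cocharacterization} asks for something structurally different: a single $x^\ast$ vanishing on all of $U$ and lying in $\p_\eps\gamma(y-x)$. No amount of collecting turns the former family into the latter. Worse, \ref{dual-cocharacterization} as printed is---by the same proposition applied to the subspace $U$, the point $2x-y\notin U$, and the candidate $x$---equivalent to ``$(y-x)\perp_B^\eps z$ for all $z\in U$'', i.e.\ the orthogonality of \ref{birkhoff-codifference} with the two vectors interchanged; the triangular gauge exhibited immediately after the proposition (where $x^\ast=(0,1)$ satisfies $\gamma^\circ(x^\ast)=1$, annihilates $U$, and lies in $\p\gamma(y-x)$, yet $(1,0)\centernot\perp_B(0,1)$) shows that this is not equivalent to \ref{birkhoff-codifference}. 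So this link of your chain cannot be completed for \ref{dual-cocharacterization} as literally stated. For what it is worth, the paper's own proof disposes of \ref{reverse-best-approximation}$\Longleftrightarrow$\ref{dual-cocharacterization} with a bare citation of \hyperref[result:birkhoff-best-approximation]{Proposition~\ref*{result:birkhoff-best-approximation}} and suffers from the same defect, so the trouble appears to lie in the formulation of \ref{dual-cocharacterization} rather than in your strategy; the correct dual description is precisely the $z$-dependent family you derived.
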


\begin{proof}
\ref{birkhoff-codifference}$\Longleftrightarrow$\ref{reverse-best-approximation}: As $\tilde{z}$ traverses $U$, the point $z=x-\tilde{z}$ does the same, and vice versa. So
\begin{equation*}
\gamma(\tilde{z})\leq\gamma(\tilde{z}+\lambda(y-x))+\eps
\end{equation*}
for all $\tilde{z}\in U$ and all $\lambda\in\RR$ if and only if
\begin{equation*}
\gamma(x-z)\leq\gamma(x+\lambda(y-x)-z)+\eps
\end{equation*}
for all $z\in U$, $\lambda\in\RR$. (Note that $x+\lambda(y-x)$ is an arbitrary point of $\strline{x}{y}$.)

\ref{reverse-best-approximation}$\Longleftrightarrow$\ref{dual-cocharacterization}: See \hyperref[result:birkhoff-best-approximation]{Proposition~\ref*{result:birkhoff-best-approximation}}\ref{best-approximation}$\Longleftrightarrow$\ref{dual-characterization}.

\ref{reverse-best-approximation}$\Longrightarrow$\ref{best-coapproximation}: In
\begin{equation*}
\gamma(x-z)\leq\gamma(w-z)+\eps \text{ for all }z\in U\text{ and }w\in\strline{x}{y},
\end{equation*}
we choose $w=y$ to obtain
\begin{equation*}
\gamma(x-z)\leq\gamma(y-z)+\eps \text{ for all }z\in U.
\end{equation*}
\end{proof}
If $\gamma$ is a norm and $\eps=0$, the implication \ref{best-coapproximation}$\Longrightarrow$\ref{birkhoff-codifference} is true as well. However, we cannot expect this implication to be valid for gauges in general, even for $\eps=0$. For instance, take $X=\RR^2$, $\gamma:\RR^2\to\RR$, $\gamma(x_1,x_2)=\max\setn{-x_2,x_2-x_1,x_2+x_1}$, $U=\setcond{(x_1,0)}{x_1\in\RR}$, and $y=(0,1)$. Then the unit ball $\mc{0}{1}$ is the triangle with vertices $(0,1)$, $(-2,-1)$, $(2,-1)$, and the set of $0$-best co-approximations of $y$ in $U$ is
\begin{equation*}
R_U^0(y)=U\cap\lr{\bigcap_{z\in U}\mc{z}{\gamma(y-z)}}=[(-1,0),(1,0)].
\end{equation*}
Now take $x=(0,0)$, $z=(1,0)$, and $w=(0,-0.5)$. Then $\gamma(w-z)=0.5<1=\gamma(x-z)$, so $x$ is not a $0$-best approximation of $z$ in $\strline{x}{y}$. Furthermore, for $\lambda=-0.5$, $\gamma(z)=1>0.5=\gamma(z+\lambda(y-x))$, so $z\centernot\perp_B(y-x)$.

On the other hand, item \ref{best-coapproximation} from \hyperref[result:birkhoff-best-coapproximation]{Proposition~\ref*{result:birkhoff-best-coapproximation}} implies that $\gamma(z)\leq\gamma(z+\alpha(y-x))+\eps$ for all $z\in U$ and $\alpha\in [0,1]$, see again \cite[Theorem~2.3]{HasaniMaVa2007} for the analogous statement in normed spaces.

As a corollary of \hyperref[result:birkhoff-best-approximation]{Proposition~\ref*{result:birkhoff-best-approximation}} for one-dimensional subspaces, we obtain the following result, see also \cite[Theorem~2.3]{James1947a}.
\begin{Kor}\label{result:left-birkhoff-characterization}
Let $x,y\in\RR^d$, $\eps\geq 0$, and $\alpha\in\RR$. The following statements are equivalent:
\begin{enumerate}[label={(\alph*)},leftmargin=*,align=left,noitemsep]
\item{The point $\alpha x+y$ is an $\eps$-best approximation of $0$ in $y+\lin\setn{x}$.}
\item{The point $\alpha x$ is an $\eps$-best approximation of $-y$ in $\lin\setn{x}$.}
\item{We have $(\alpha x+y)\perp_B^\eps x$.}
\item{There exists $x^\ast\in\RR^d$ such that $\gamma^\circ(x^\ast)=1$, $\skpr{x^\ast}{x}=0$ and $\skpr{x^\ast}{y}\geq\gamma(\alpha x+y)-\eps$.}
\end{enumerate}
Moreover, the set of numbers $\alpha\in\RR$ such that $(\alpha x+y)\perp_B^\eps x$ is a compact interval provided $x\neq 0$.
\end{Kor}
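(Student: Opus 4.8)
The plan is to obtain the three \enquote{primal} equivalences by unwinding definitions and then to deduce (c)$\Longleftrightarrow$(d) by specialising \hyperref[result:birkhoff-best-approximation]{Proposition~\ref*{result:birkhoff-best-approximation}} to the line $U=\lin\setn{x}$. First I would note that, writing $z=tx+y$, statement (a) says $\gamma(\alpha x+y)\leq\gamma(tx+y)+\eps$ for all $t\in\RR$; translating the inequality defining (b) by $-y$ yields the same condition; and expanding $\perp_B^\eps$ in (c) via \hyperref[definition:orthogonality]{Definition~\ref*{definition:orthogonality}} and reparametrising $t=\alpha+\lambda$ yields it once more. Thus (a), (b), and (c) are literally one and the same statement, and no assumption beyond those made is needed for their equivalence.

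For (c)$\Longleftrightarrow$(d) I would begin by observing that any $x^\ast$ admissible in (d) satisfies $\skpr{x^\ast}{x}=0$, whence $\skpr{x^\ast}{y}=\skpr{x^\ast}{\alpha x+y}$, so (d) is equivalent to the existence of $x^\ast$ with $\gamma^\circ(x^\ast)=1$, $\skpr{x^\ast}{x}=0$, and $\skpr{x^\ast}{\alpha x+y}\geq\gamma(\alpha x+y)-\eps$. In the principal case $x\neq 0$, $y\notin\lin\setn{x}$, and $0\leq\eps<\gamma(\alpha x+y)$, I would apply \hyperref[result:birkhoff-best-approximation]{Proposition~\ref*{result:birkhoff-best-approximation}} with subspace $U=\lin\setn{x}$, its point \enquote{$x$} taken to be $\alpha x\in U$ and its point \enquote{$y$} taken to be $-y\notin U$: then (b) coincides with part \ref{best-approximation} of that proposition, and its part \ref{dual-characterization} furnishes $x^\ast$ with $\gamma^\circ(x^\ast)=1$, $\skpr{x^\ast}{x}=0$, and $x^\ast\in\p_\eps\gamma(\alpha x+y)$. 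Since for $\alpha x+y\neq 0$ the membership $x^\ast\in\p_\eps\gamma(\alpha x+y)$ together with $\skpr{x^\ast}{x}=0$ is precisely the inequality $\skpr{x^\ast}{\alpha x+y}\geq\gamma(\alpha x+y)-\eps$ by \eqref{eq:subdifferential-gauge} (equivalently, by \hyperref[result:birkhoff-orthogonality-cauchy-schwarz-inequality]{Lemma~\ref*{result:birkhoff-orthogonality-cauchy-schwarz-inequality}}), this identifies part \ref{dual-characterization} of the proposition with (d); running the identifications backwards—with \hyperref[result:birkhoff-orthogonality-annihilating-subgradient]{Corollary~\ref*{result:birkhoff-orthogonality-annihilating-subgradient}}\ref{annihilating-eps-subgradient} available as an alternative route for the converse—closes the loop.

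It then remains to dispose of the degenerate configurations, for which \hyperref[result:birkhoff-best-approximation]{Proposition~\ref*{result:birkhoff-best-approximation}} does not apply. If $\eps\geq\gamma(\alpha x+y)$ (in particular if $\alpha x+y=0$), then (c) holds automatically because $\gamma(\alpha x+y+\lambda x)+\eps\geq\eps\geq\gamma(\alpha x+y)$, and the sole remaining requirement in (d), namely $\skpr{x^\ast}{y}\geq\gamma(\alpha x+y)-\eps$, has a non-positive right-hand side and is met by rescaling a suitably signed element of $\setcond{x^\ast\in\RR^d}{\skpr{x^\ast}{x}=0}$ to polar length one; if $x=0$ all four statements are vacuously true. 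For the final assertion I would use $x\neq 0$ to identify $\lin\setn{x}$ with $\RR$ via $t\mapsto tx$ and combine the equivalence (b)$\Longleftrightarrow$(c) with the observation that the set of $\eps$-best approximations of $-y$ in $\lin\setn{x}$ is a sublevel set of the continuous, convex, and coercive function $t\mapsto\gamma(tx+y)$, hence a non-empty compact interval.

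The argument has no deep step; the one place that requires care is the boundary regime $\eps\geq\gamma(\alpha x+y)$, since the bridge between \enquote{$x^\ast\in\p_\eps\gamma(\alpha x+y)$ with $\gamma^\circ(x^\ast)=1$} and the scalar inequality $\skpr{x^\ast}{\alpha x+y}\geq\gamma(\alpha x+y)-\eps$ relies on $\alpha x+y\neq 0$ in \eqref{eq:subdifferential-gauge} and on $\eps<\gamma(\alpha x+y)$ in \hyperref[result:birkhoff-orthogonality-cauchy-schwarz-inequality]{Lemma~\ref*{result:birkhoff-orthogonality-cauchy-schwarz-inequality}}, so that regime has to be separated off and verified directly as above.
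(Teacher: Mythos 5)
Your argument is exactly the route the paper intends: the paper states this corollary without proof, remarking only that it follows from \hyperref[result:birkhoff-best-approximation]{Proposition~\ref*{result:birkhoff-best-approximation}} applied to the one-dimensional subspace $U=\lin\setn{x}$, and your unwinding of (a), (b), (c) into one inequality plus the translation of part~\ref{dual-characterization} of that proposition into (d) via \eqref{eq:subdifferential-gauge} is the intended (and correct) argument; your explicit treatment of the degenerate configurations is more careful than the paper. One small residue: your case split leaves uncovered the configuration $x\neq 0$, $y\in\lin\setn{x}$, $\eps<\gamma(\alpha x+y)$, where the proposition is inapplicable because $-y\in U$; there the equivalence still holds because both (c) and (d) are false — writing $y=cx$, taking $\lambda=-(\alpha+c)$ in (c) would force $\gamma(\alpha x+y)\leq\eps$, while in (d) any admissible $x^\ast$ has $\skpr{x^\ast}{y}=c\skpr{x^\ast}{x}=0<\gamma(\alpha x+y)-\eps$ — so that subcase should be added to your direct verifications.
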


Emulating key properties of usual inner products, \emph{semi-inner products} enable Hilbert-like arguments in arbitrary Banach spaces. Prominent examples include the \emph{superior} and \emph{inferior semi-inner product} associated with a given norm $\mnorm{\cdot}$, which are in fact directional derivates of the convex function $\frac{1}{2}\mnorm{\cdot}^2$. New approaches to classical concepts have been developed, not only to optimization problems like the Fermat--Torricelli problem \cite{ComanescuDrKi2013} and the best approximation problem \cite{Dragomir2001} but also to geometric concepts like orthogonality \cite[Chapters~8--11]{Dragomir2004}. In particular, several results connecting semi-inner products to Birkhoff orthogonality have been derived. We close this subsection by demonstrating how Birkhoff orthogonality in generalized Minkowski spaces can be characterized in terms of natural analogs of the superior and inferior semi-inner products. To this end, consider the functions $g:\RR^d\to\RR$, $g(x)=\frac{1}{2}\gamma(x)^2$, and $(\cdot,\cdot)_s, (\cdot,\cdot)_i:\RR^d\times\RR^d\to\RR$ defined by
\begin{align*}
(y,x)_s&=\lim_{\lambda\downarrow 0}\frac{\gamma(x+\lambda y)^2-\gamma(x)^2}{2\lambda}=g^\prime(x;y),\\
(y,x)_i&=\lim_{\lambda\uparrow 0}\frac{\gamma(x+\lambda y)^2-\gamma(x)^2}{2\lambda}=-g^\prime(x;-y)=-(-y,x)_s.
\end{align*}
Note that $(\cdot,\cdot)_s$, $(\cdot,\cdot)_i$ need not be semi-inner products in the sense of \cite[Definition~6]{Dragomir2004} since $(x,y)_p^2\leq (x,x)_p(y,y)_p$ for $p\in\setn{s,i}$ may be invalidated by the asymmetry of $\gamma$. In normed spaces, this estimate is checked in \cite[Proposition~6]{Dragomir2004}. The proof uses the reverse triangle inequality which is not valid for gauges. However, we can show an upper bound for $(\cdot,\cdot)_s$:
\begin{align}
(y,x)_s&=\lim_{\lambda\downarrow 0}\frac{\gamma(x+\lambda y)^2-\gamma(x)^2}{2\lambda}\nonumber\\
&=\lim_{\lambda\downarrow 0}\lr{\frac{\gamma(x+\lambda y)+\gamma(x)}{2}\cdot\frac{\gamma(x+\lambda y)-\gamma(x)}{\lambda}}\nonumber\\
&=\lim_{\lambda\downarrow 0}\frac{\gamma(x+\lambda y)+\gamma(x)}{2}\lim_{\lambda\downarrow 0}\frac{\gamma(x+\lambda y)-\gamma(x)}{\lambda}\nonumber\\
&=\gamma(x)\lim_{\lambda\downarrow 0}\frac{\gamma(x+\lambda y)-\gamma(x)}{\lambda}\label{eq:chain-rule}\\
&\leq\gamma(x)\lim_{\lambda\downarrow 0}\frac{\gamma(\lambda y)}{\lambda}\nonumber\\
&\leq\gamma(x)\gamma(y),\nonumber
\end{align}
where \eqref{eq:chain-rule} can be written as $(y,x)_s=\gamma(x)\gamma^\prime(x;y)$, which is basically the chain rule for directional derivatives \cite[Proposition~3.6]{Shapiro1990} applied to the function $g$. Similarly, a lower bound for the function $(\cdot,\cdot)_i$ is $(y,x)_i=-\gamma(x)\gamma^\prime(x;-y)\geq -\gamma(x)\gamma(-y)$.

Following the lines of \cite[Proposition~5]{Dragomir2004}, we can also check that
\begin{align*}
(x,x)_s&=\lim_{\lambda\downarrow 0}\frac{\gamma(x+\lambda x)^2-\gamma(x)^2}{2\lambda}=\lim_{\lambda\downarrow 0}\frac{(1+\lambda)^2\gamma(x)^2-\gamma(x)^2}{2\lambda}\\
&=\gamma(x)^2\lim_{\lambda\downarrow 0}\frac{(1+\lambda)^2-1}{2\lambda}
=\gamma(x)^2
\end{align*}
and
\begin{align*}
(x,x)_i&=\lim_{\lambda\uparrow 0}\frac{\gamma(x+\lambda x)^2-\gamma(x)^2}{2\lambda}
=\lim_{\lambda\uparrow 0,\lambda>-1}\frac{(1+\lambda)^2\gamma(x)^2-\gamma(x)^2}{2\lambda}\\
&=\gamma(x)^2\lim_{\lambda\uparrow 0,\lambda>-1}\frac{(1+\lambda)^2-1}{2\lambda}
=\gamma(x)^2
\end{align*}
for $x\in\RR^d$.

For all $x,y\in\RR^d$, $\alpha\in\RR$, and $\mu\geq 0$, a generalization of \cite[Theorem~16]{Dragomir2004} can be established by using the computation in the proof of \hyperref[result:right-birkhoff-directional-derivative]{Corollary~\ref*{result:right-birkhoff-directional-derivative}} and the chain rule for directional derivatives:
\begin{align*}
g^\prime(x;\alpha x+\mu y)&=\gamma(x)\gamma^\prime(x;\alpha x+\mu y)\\
&=\gamma\lr{\alpha\gamma(x)+\mu\gamma^\prime(x;y)}\\
&=\alpha\gamma(x)^2+\mu\gamma(x)\gamma^\prime(x;y).
\end{align*}

In the classical theory in normed spaces, computations like these provide the basis for proving characterizations of Birkhoff orthogonality in terms of superior and inferior semi-inner products. In our context, we may define a weight $\phi:[0,+\infty)\to [0,+\infty)$ via $\phi(t)=t$, yielding $J_\phi\gamma=\p g=\gamma(x)\p\gamma(x)$.
In combination with $(y,x)_s=g^\prime(x;y)=\gamma(x)\gamma^\prime(x;y)$, results on Birkhoff orthogonality in $(\RR^d,\gamma)$ in terms of $(\cdot,\cdot)_s$ and $(\cdot,\cdot)_i$ can therefore be derived using the above theory by suitably multiplying by $\gamma(x)$.
For instance, the analog of \cite[Corollary~12]{Dragomir2004} in generalized Minkowski spaces is the equivalence of the statements
\begin{enumerate}[label={(\alph*)},leftmargin=*,align=left,noitemsep]
\item{$x\perp_B(\alpha x+y)$,}
\item{$(y,x)_i\leq-\alpha\gamma(x)^2\leq (y,x)_s$.}
\end{enumerate}
\begin{proof}
In \eqref{eq:right-birkhoff-directional-derivative-characterization}, multiply by $\gamma(x)$.
\end{proof}
For $\alpha=0$, this yields the equivalence of $x\perp_B y$ and $(y,x)_i\leq 0\leq (y,x)_s$, see \cite[Theorem~50]{Dragomir2004}, \cite[p.~54]{Hetzelt1985}, or \cite[Equation~(17)]{Precupanu2013} for the corresponding result in normed spaces. 

Finally, setting $\phi:[0,+\infty)\to [0,+\infty)$, $\phi(t)=t$ in \hyperref[eq:right-parameters]{Equation~(\ref*{eq:right-parameters})} yields $x\perp_B\lr{y-\frac{(y,x)_s}{\gamma(x)^2}x}$ for $x\neq 0$, cf. \cite[Equation~(19)]{Precupanu2013}.

\subsection{Smoothness and rotundity}\label{chap:rotundity-smoothness}
As in the case of normed spaces, Birkhoff orthogonality in generalized Minkowski spaces can be used to characterize rotundity and smoothness of the unit ball. The results for the general setting are the subject of this section. The first theorem connects smoothness and Birkhoff orthogonality; the corresponding results for normed spaces are \cite[Theorems~4.2,~5.1]{James1947a}.
\begin{Satz}\label{result:smoothness}
Let $(\RR^d,\gamma)$ be a generalized Minkowski space of dimension $d\geq 2$. The following statements are equivalent:
\begin{enumerate}[label={(\alph*)},leftmargin=*,align=left,noitemsep]
\item{The unit ball $\mc{0}{1}$ is smooth.\label{unit-ball-smooth}}
\item{The gauge $\gamma$ is G{\^{a}}teaux differentiable on $\RR^d\setminus\setn{0}$.\label{gauge-gateaux-differentiable}}
\item{If $x,y,z\in\RR^d$, $x\perp_B y$ and $x\perp_B z$, then $x\perp_B(y+z)$.\label{birkhoff-right-additivity}}
\item{For every $x,y\in\RR^d$, $x\neq 0$, there exists a unique number $\alpha\in\RR$ such that $x\perp_B(\alpha x+y)$.\label{birkhoff-right-uniqueness}}
\item{For all $x\in\RR^d$ with $\gamma(x)=1$, there exists a unique vector $x^\ast\in\RR^d$, $\gamma^\circ(x^\ast)=1$ such that $\skpr{x^\ast}{x}=1$.\label{unique-norming-functional}}
\end{enumerate}
In this case, $x^\ast$ is the G{\^{a}}teaux derivative of $\gamma$ at $x$ (items~\ref{unique-norming-functional} and~\ref{gauge-gateaux-differentiable}), $\gamma^\prime(x;y)=-\alpha\gamma(x)$ (item~\ref{birkhoff-right-uniqueness}), and the unique hyperplane of $\mc{0}{1}$ at one of its boundary points $x$ consists of all points $y$ such that $x\perp_B (y-x)$ (item~\ref{unit-ball-smooth}).
\end{Satz}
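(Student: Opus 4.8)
The plan is to make item~\ref{unique-norming-functional} the hub of the proof, since it says exactly that $\p\gamma(x)$ is a singleton at every unit vector, and to route every other item through it. I would begin by recording two structural facts. By \eqref{eq:subdifferential-gauge} together with the Cauchy--Schwarz inequality \eqref{eq:cauchy-schwarz}, one has, for every $x$ with $\gamma(x)=1$,
\[
\p\gamma(x)=\setcond{x^\ast\in\RR^d}{\gamma^\circ(x^\ast)=1,\ \skpr{x^\ast}{x}=1},
\]
and, again from \eqref{eq:subdifferential-gauge}, $\p\gamma$ is constant along rays through the origin, so singleton-valuedness at all unit vectors is the same as singleton-valuedness at all nonzero vectors. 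With these in hand, \ref{unit-ball-smooth}$\Leftrightarrow$\ref{unique-norming-functional} is essentially bookkeeping: for a boundary point $x$ of $\mc{0}{1}$ (so $\gamma(x)=1$), the supporting hyperplanes of $\mc{0}{1}$ at $x$ correspond bijectively, after normalization, to the elements of the set displayed above, i.e.\ to the elements of $\p\gamma(x)$, and smoothness at $x$ is uniqueness of this hyperplane. Likewise \ref{gauge-gateaux-differentiable}$\Leftrightarrow$\ref{unique-norming-functional}: by \eqref{eq:directional-derivative-subdifferential-sup} the directional derivative $\gamma^\prime(x;\cdot)$ is the support function of $\p\gamma(x)$, and a support function is linear exactly when the underlying compact convex set is a point; ray-invariance of both $\p\gamma$ and $\gamma^\prime(\cdot;y)$ passes between unit vectors and arbitrary nonzero vectors.

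For \ref{unique-norming-functional}$\Leftrightarrow$\ref{birkhoff-right-uniqueness} I would invoke Corollary~\ref{result:birkhoff-orthogonality-annihilating-subgradient} (with $\eps=0$) to write, for $x\neq0$,
\[
\setcond{\alpha\in\RR}{x\perp_B(\alpha x+y)}=\setcond{-\frac{\skpr{x^\ast}{y}}{\gamma(x)}}{x^\ast\in\p\gamma(x)},
\]
the image of the compact convex set $\p\gamma(x)$ under a linear functional. This is a singleton for every $y$ precisely when $\p\gamma(x)$ is a singleton, since every linear functional is constant on a convex set only if that set is a point. As existence of such an $\alpha$ always holds by Proposition~\ref{result:right-birkhoff-existence}, this together with ray-invariance yields the equivalence.

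The orthogonality-flavoured loop \ref{unique-norming-functional}$\Rightarrow$\ref{birkhoff-right-additivity}$\Rightarrow$\ref{birkhoff-right-uniqueness} carries the real content. Given \ref{unique-norming-functional}, hence $\p\gamma(x)=\setn{x^\ast}$ for every $x\neq0$, Corollary~\ref{result:birkhoff-orthogonality-annihilating-subgradient} turns $x\perp_B y$ into $\skpr{x^\ast}{y}=0$ (the case $x=0$ is trivial, as $0\perp_B y$ always), and the kernel of a linear functional is closed under addition, which gives \ref{birkhoff-right-additivity}. The step I expect to be the main obstacle is \ref{birkhoff-right-additivity}$\Rightarrow$\ref{birkhoff-right-uniqueness}, which I would prove directly rather than via \ref{unique-norming-functional}: suppose $x\neq0$ and $x\perp_B(\alpha_1 x+y)$, $x\perp_B(\alpha_2 x+y)$. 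Birkhoff orthogonality is invariant under $y\mapsto-y$ (replace $\lambda$ by $-\lambda$ in its defining inequality), so $x\perp_B\bigl(-(\alpha_2 x+y)\bigr)$, and right additivity gives $x\perp_B\bigl((\alpha_1-\alpha_2)x\bigr)$; using the $y\mapsto-y$ invariance once more to arrange a positive coefficient and rescaling by right homogeneity, this forces $x\perp_B x$, whence $x=0$ by nondegeneracy, a contradiction. Hence $\alpha_1=\alpha_2$, and existence is Proposition~\ref{result:right-birkhoff-existence} again.

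It remains to read off the supplementary assertions. That $x^\ast$ from \ref{unique-norming-functional} is the G{\^{a}}teaux derivative of $\gamma$ at $x$ is exactly what the proof of \ref{unique-norming-functional}$\Rightarrow$\ref{gauge-gateaux-differentiable} produces, since there $\gamma^\prime(x;y)=\skpr{x^\ast}{y}$. For $\gamma^\prime(x;y)=-\alpha\gamma(x)$, feed G{\^{a}}teaux differentiability (so $\gamma^\prime(x;-y)=-\gamma^\prime(x;y)$) into the two-sided estimate $-\gamma^\prime(x;-y)\leq-\alpha\gamma(x)\leq\gamma^\prime(x;y)$ of Corollary~\ref{result:right-birkhoff-directional-derivative}, which then collapses to the claimed equality. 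Finally, for a boundary point $x$ with $\p\gamma(x)=\setn{x^\ast}$ and $\skpr{x^\ast}{x}=1$, the supporting hyperplane $\setcond{y}{\skpr{x^\ast}{y}=1}$ equals $\setcond{y}{\skpr{x^\ast}{y-x}=0}$, which by Corollary~\ref{result:birkhoff-orthogonality-annihilating-subgradient} is precisely $\setcond{y\in\RR^d}{x\perp_B(y-x)}$.
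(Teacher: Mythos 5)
Your proof is correct and follows essentially the same route as the paper's: both arguments reduce every item to the statement that $\p\gamma(x)$ is a singleton at each non-zero $x$, and your step from right additivity to uniqueness of $\alpha$ (sign-flip invariance, additivity, then nondegeneracy applied to $x\perp_B(\alpha_1-\alpha_2)x$) is exactly the paper's argument. The only cosmetic difference is that you take the norming-functional condition as the hub and phrase the transitions analytically through Corollary~\ref{result:birkhoff-orthogonality-annihilating-subgradient}, whereas the paper takes smoothness of the unit ball as the hub and phrases them geometrically through supporting hyperplanes via Lemma~\ref{result:birkhoff-orthogonality-cauchy-schwarz-inequality}; these are dual descriptions of the same object.
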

\begin{proof}
\ref{unique-norming-functional}$\Longleftrightarrow$\ref{unit-ball-smooth}: See \hyperref[result:birkhoff-orthogonality-cauchy-schwarz-inequality]{Lemma~\ref*{result:birkhoff-orthogonality-cauchy-schwarz-inequality}}.

\ref{unit-ball-smooth}$\Longrightarrow$\ref{birkhoff-right-additivity}: The unique supporting hyperplane of $\mc{0}{\gamma(x)}$ passing through $x$ has the form
\begin{equation*}
H\defeq\setcond{h\in\RR^d}{\skpr{x^\ast}{h}=\skpr{x^\ast}{x}},
\end{equation*}
where $x^\ast$ is uniquely determined up to a constant factor. Using \hyperref[result:birkhoff-orthogonality-cauchy-schwarz-inequality]{Lemma~\ref*{result:birkhoff-orthogonality-cauchy-schwarz-inequality}}\ref{birkhoff-orthogonality-eps-hyperplane}$\Longleftrightarrow$\ref{linear-functional-eps-maximum}, we conclude that $x\perp_B h$ if and only if $h\in H-x$. That is, if $x\perp_B y$ and $x\perp_B z$, then $y,z\in H-x$. Since $H-x$ is a linear subspace of $\RR^d$, we also have $y+z\in H-x$ and thus $x\perp_B(y+z)$.

\ref{birkhoff-right-additivity}$\Longrightarrow$\ref{birkhoff-right-uniqueness}: Assume that there exist numbers $\alpha,\beta\in\RR$ such that $x\perp_B(\alpha x+y)$ and $x\perp_B(\beta x+y)$. Due to the homogeneity of $\perp_B$ and \ref{birkhoff-right-additivity}, we obtain $x\perp_B(\alpha x-\beta x)$, which implies $\alpha=\beta$.

$\lnot$\ref{unit-ball-smooth}$\Longrightarrow\lnot$\ref{birkhoff-right-uniqueness}: Let $x^\ast_1,x^\ast_2\in\RR^d$ be vectors such that the hyperplanes
\begin{equation*}
H_i\defeq\setcond{h\in\RR^d}{\skpr{x^\ast_i}{h}=\skprs{x^\ast_i}{x}},\qquad i\in\setn{1,2},
\end{equation*}
are supporting hyperplanes of $\mc{0}{\gamma(x)}$ at $x$. Set $\alpha_i\defeq -\frac{\skpr{x^\ast_i}{x}}{\skpr{x^\ast}{y}}$. Then $x\perp_B(\alpha_ix+y)$ for $i\in\setn{1,2}$, and $\alpha_1\neq\alpha_2$ if the intersection points of the line $\setcond{\alpha x+y}{\alpha\in\RR}$ with $H_1$ and $H_2$ do not coincide.

\ref{gauge-gateaux-differentiable}$\Longleftrightarrow$\ref{unit-ball-smooth}: If $\gamma$ has a G{\^{a}}teaux derivate $x^\ast$ at a point $x$ with $\gamma(x)=1$, then $\p\gamma(x)=\setn{x^\ast}$. In particular, $\gamma^\circ(x^\ast)=1$, and \hyperref[result:birkhoff-orthogonality-cauchy-schwarz-inequality]{Lemma~\ref*{result:birkhoff-orthogonality-cauchy-schwarz-inequality}}\ref{eps-subgradient}$\Longleftrightarrow$\ref{linear-functional-eps-maximum} yields the uniqueness of supporting hyperplanes of $\mc{0}{1}$ at $x$. Conversely, if there is a unique supporting hyperplane of $\mc{0}{1}$ at $x$, then \hyperref[result:birkhoff-orthogonality-cauchy-schwarz-inequality]{Lemma~\ref*{result:birkhoff-orthogonality-cauchy-schwarz-inequality}}\ref{eps-subgradient}$\Longleftrightarrow$\ref{linear-functional-eps-maximum} implies that $\p\gamma(x)$ is a singleton. Hence $\gamma$ is G{\^{a}}teaux differentiable at $x$, see \cite[Corollary~2.4.10]{Zalinescu2002}.
\end{proof}
Alternatively, the implication \ref{birkhoff-right-additivity}$\Longleftrightarrow$\ref{unit-ball-smooth} in \hyperref[result:smoothness]{Theorem~\ref*{result:smoothness}} can be proved as follows. Let $x$ be a boundary point of $\mc{0}{1}$. The set $\setcond{z\in\RR^d}{x\perp_B^\eps z}$ is a union of hyperplanes passing through the origin, hence a cone. If \ref{birkhoff-right-additivity} is true, then this cone is convex. But a convex cone which is a union of hyperplanes is either a single hyperplane or $\RR^d$, the latter one contradicting $x\neq 0$.

The second theorem is a characterization of rotund gauges in terms of Birkhoff orthogonality; it generalizes \cite[Theorems~4.3,~5.2]{James1947a}.
\begin{Satz}\label{result:rotundity}
Let $(\RR^d,\gamma)$ be a generalized Minkowski space of dimension $d\geq 2$. The following statements are equivalent:
\begin{enumerate}[label={(\alph*)},leftmargin=*,align=left,noitemsep]
\item{The unit ball $\mc{0}{1}$ is rotund.\label{unit-ball-rotund}}
\item{The gauge $\gamma$ is rotund.\label{unit-ball-chords-midpoint-interior}}
\item{For every $x,y\in\RR^d$, $x\neq 0$, there exists a unique number $\alpha\in\RR$ such that $(\alpha x+y)\perp_B x$.\label{birkhoff-left-uniqueness}}
\item{For all $x^\ast\in\RR^d$, the linear functional $\skpr{x^\ast}{\cdot}$ has at most one maximum on $\mc{0}{1}$.\label{linear-functional-maximum-uniqueness}}
\end{enumerate}
\end{Satz}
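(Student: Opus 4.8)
The plan is to show all four items equivalent by routing every implication through~\ref{unit-ball-rotund}. Before starting I would note that~\ref{linear-functional-maximum-uniqueness} is to be read for $x^\ast\neq 0$, since for $x^\ast=0$ every point of $\mc{0}{1}$ maximizes $\skpr{x^\ast}{\cdot}$, and I would record the standing facts that $\gamma$ is convex and continuous and that the boundary of $\mc{0}{1}$ equals $\setcond{z\in\RR^d}{\gamma(z)=1}$.

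For~\ref{unit-ball-rotund}$\Leftrightarrow$\ref{unit-ball-chords-midpoint-interior} I would use positive homogeneity: $\gamma(x+y)=2\gamma\lr{\tfrac{1}{2}(x+y)}$, so for distinct $x,y$ with $\gamma(x)=\gamma(y)=1$ the inequality $\gamma(x+y)<2$ says exactly that the midpoint of the chord $[x,y]$ of $\mc{0}{1}$ is an interior point; a nondegenerate segment contained in the boundary would have its midpoint violate this. For the converse, if $\gamma(x+y)=2$ for some distinct vectors with $\gamma(x)=\gamma(y)=1$ (subadditivity always gives $\leq 2$), then $t\mapsto\gamma(tx+(1-t)y)$ is convex on $[0,1]$, bounded above by $1$, and equal to $1$ at $0$, $\tfrac{1}{2}$, $1$, hence constant there, so $[x,y]$ lies in the boundary. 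For~\ref{unit-ball-rotund}$\Leftrightarrow$\ref{linear-functional-maximum-uniqueness} I would argue by convex geometry: if $\skpr{x^\ast}{\cdot}$ attains its maximum $\gamma^\circ(x^\ast)$ over $\mc{0}{1}$ at two distinct points $u,v$, then $[u,v]$ lies on the corresponding supporting hyperplane and hence in the boundary of $\mc{0}{1}$ (points of $\mc{0}{1}$ on a supporting hyperplane are boundary points); conversely, if $[u,v]$ with $u\neq v$ lies in the boundary, a supporting hyperplane $H=\setcond{z\in\RR^d}{\skpr{x^\ast}{z}=1}$ of $\mc{0}{1}$ at the midpoint $m=\tfrac{1}{2}(u+v)$ — normalized so that $\gamma^\circ(x^\ast)=1$, which is possible since $\gamma^\circ=h_{\mc{0}{1}}$ — satisfies $\skpr{x^\ast}{u}\leq 1$, $\skpr{x^\ast}{v}\leq 1$, and $\skpr{x^\ast}{m}=1$, forcing $\skpr{x^\ast}{u}=\skpr{x^\ast}{v}=1$, so $\skpr{x^\ast}{\cdot}$ has two maxima.

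The substantial part is~\ref{unit-ball-rotund}$\Leftrightarrow$\ref{birkhoff-left-uniqueness}. The key reformulation is that, by \hyperref[definition:orthogonality]{Definition~\ref*{definition:orthogonality}}, $(\alpha x+y)\perp_B x$ holds if and only if $\alpha x+y$ minimizes $\gamma$ on the line $\strline{y}{x+y}=y+\RR x$; combined with \hyperref[result:left-birkhoff-characterization]{Corollary~\ref*{result:left-birkhoff-characterization}}, which guarantees that for $x\neq 0$ this set of minimizers is a nonempty compact interval, statement~\ref{birkhoff-left-uniqueness} amounts to saying that $\gamma$ has a unique minimizer on every line. If that fails, two distinct minimizers $p_1=\alpha_1x+y$, $p_2=\alpha_2x+y$ of $\gamma$ on a line share a value $\varrho=\gamma(p_1)=\gamma(p_2)$, which is positive (otherwise $p_1=p_2=0$); by convexity $\gamma\equiv\varrho$ on $[p_1,p_2]$, so $[\varrho^{-1}p_1,\varrho^{-1}p_2]$ is a nondegenerate segment in the boundary of $\mc{0}{1}$ and~\ref{unit-ball-rotund} fails. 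Conversely, given a nondegenerate segment $[u,v]$ in the boundary, I would take the supporting hyperplane $H=\setcond{z\in\RR^d}{\skpr{x^\ast}{z}=1}$, $\gamma^\circ(x^\ast)=1$, at the midpoint as in the previous paragraph; then $[u,v]$, and hence the whole line $\strline{u}{v}$, lies in $H$, so the Cauchy--Schwarz inequality~\eqref{eq:cauchy-schwarz} gives $\gamma(z)\geq\skpr{x^\ast}{z}=1$ for all $z\in\strline{u}{v}$, while $\gamma\equiv 1$ on $[u,v]$. Thus $u$ and $v$ both minimize $\gamma$ on $\strline{u}{v}=\setcond{y_0+tx_0}{t\in\RR}$ with $x_0\defeq v-u\neq 0$, $y_0\defeq u$, which yields $(0\cdot x_0+y_0)\perp_B x_0$ and $(x_0+y_0)\perp_B x_0$, contradicting~\ref{birkhoff-left-uniqueness}.

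I expect the main obstacle to be precisely this last implication $\lnot$\ref{unit-ball-rotund}$\Rightarrow\lnot$\ref{birkhoff-left-uniqueness}: one has to realize the boundary segment $[u,v]$ as a segment of $\gamma$-minimizers along a line, and the decisive point is that the supporting hyperplane of $\mc{0}{1}$ at the midpoint of $[u,v]$ forces $\gamma\geq 1$ along the \emph{entire} line carrying $[u,v]$, not merely on the segment. Every remaining step reduces to the definitions, to \hyperref[result:left-birkhoff-characterization]{Corollary~\ref*{result:left-birkhoff-characterization}}, and to standard facts about convex functions and supporting hyperplanes of compact convex bodies.
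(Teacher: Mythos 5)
Your proof is correct and follows essentially the same route as the paper: both reduce the left-uniqueness statement to uniqueness of the minimizer of $\gamma$ on a line, identify the set of such minimizers as a compact interval that rescales to a segment in the boundary of $\mc{0}{1}$, and for the converse observe that the line carrying a boundary segment cannot meet the interior of the unit ball (you justify this via a supporting hyperplane at the midpoint together with the Cauchy--Schwarz inequality \eqref{eq:cauchy-schwarz}, while the paper asserts it directly), with the maximizer condition handled in both cases by intersecting $\mc{0}{1}$ with the supporting hyperplane determined by $x^\ast$. The only other difference is that you prove the equivalence of rotundity of the ball and of the gauge from scratch, whereas the paper cites \cite[Lemma~3.5]{JahnKuMaRi2015}.
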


\begin{proof}
\ref{unit-ball-rotund}$\Longleftrightarrow$\ref{linear-functional-maximum-uniqueness}: The family of supporting hyperplanes of $\mc{0}{1}$ coincides with the family of hyperplanes
\begin{equation*}
H=\setcond{y\in\RR^d}{\skpr{x^\ast}{y}=h_{\mc{0}{1}}(x^\ast)},
\end{equation*}
where $x^\ast\in\RR^d$. The set of maximizers of $\skpr{x^\ast}{\cdot}$ on $\mc{0}{1}$ is then $H\cap\mc{0}{1}$, which is a subset of the boundary of $\mc{0}{1}$.

\ref{unit-ball-rotund}$\Longleftrightarrow$\ref{unit-ball-chords-midpoint-interior}: See \cite[Lemma~3.5]{JahnKuMaRi2015}.

\ref{unit-ball-rotund}$\Longleftrightarrow$\ref{birkhoff-left-uniqueness}: Fix $x,y\in\RR^d$, $x\neq 0$. Let $\mu\defeq\min\setcond{\gamma(\lambda x+y)}{\lambda\in\RR}$. Then $\mc{0}{\mu}\cap\setcond{\lambda x+y}{\lambda\in\RR}$ is the set of points $\lambda x+y$ for which $(\lambda x+y)\perp_B x$. This set is compact and convex. Thus, if it is not a singleton, it is a segment, and there is a ball which contains a straight line segment in its boundary. Conversely, if $\mc{0}{1}$ is not rotund, choose a segment $[y,z]$ in the boundary of $\mc{0}{1}$ and set $x=z-y$. Then the line $\setcond{\lambda x+y}{\lambda\in\RR}$ does not meet the interior of $\mc{0}{1}$ and, hence, $(\lambda x+y)\perp_B x$ for $\lambda\in\setn{0,1}$.
\end{proof}
\hyperref[result:smoothness]{Theorem~\ref*{result:smoothness}} shows that right additivity characterizes smoothness in all dimensions. However, left additivity does not play the same role for rotundity. 
\begin{Satz}
Let $(\RR^d,\gamma)$ be a generalized Minkowski space of dimension $d\geq 2$. If Birkhoff orthogonality is left additive, then $\gamma$ is a norm.
\end{Satz}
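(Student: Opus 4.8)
The plan is to reduce the statement to dimension two and there read off the central symmetry of the unit ball $\mc{0}{1}$ from a planar observation about convex cones; central symmetry of $\mc{0}{1}$ is equivalent to $\gamma$ being a norm.

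First I would reduce to $d=2$. For a linear subspace $V\subseteq\RR^d$ and $x,z\in V$ the defining inequality $\gamma(x)\le\gamma(x+\lambda z)$ involves only the restriction $\gamma|_V$, so Birkhoff orthogonality in the generalized Minkowski space $(V,\gamma|_V)$ is exactly the restriction of $\perp_B$; hence left additivity on $\RR^d$ is inherited by every such $V$. Since $\gamma$ is a norm if and only if $\gamma(-v)=\gamma(v)$ for all $v$, and this in turn holds if and only if $\gamma|_V$ is a norm for every two-dimensional subspace $V$, it suffices to treat the case $d=2$.

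So assume $d=2$ and fix $z\neq 0$. Because $\gamma\geq 0$, because $\perp_B$ is left homogeneous, and because of the assumed left additivity, the set $N_z\defeq\setcond{x\in\RR^2}{x\perp_B z}$ contains $0$, is invariant under multiplication by positive scalars, and is closed under addition; thus $N_z$ is a convex cone. Next I would describe $N_z$ geometrically. Pick $w^\ast\neq 0$ with $\skpr{w^\ast}{z}=0$ and let $F_+$ and $F_-$ be the faces of $\mc{0}{1}$ exposed by $w^\ast$ and $-w^\ast$, i.e.\ the nonempty compact convex subsets of $\p\mc{0}{1}$ on which $\skpr{w^\ast}{\cdot}$ attains its maximum, resp.\ minimum. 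Combining \hyperref[result:birkhoff-orthogonality-annihilating-subgradient]{Corollary~\ref*{result:birkhoff-orthogonality-annihilating-subgradient}} with \eqref{eq:subdifferential-gauge}, the Cauchy--Schwarz inequality \eqref{eq:cauchy-schwarz}, and $\gamma^\circ=h_{\mc{0}{1}}$, one sees that a boundary point $x$ of $\mc{0}{1}$ lies in $N_z$ precisely when some supporting line of $\mc{0}{1}$ at $x$ is parallel to $z$, that is, when $x\in F_+\cup F_-$. Since $N_z$ is a cone this gives $N_z=C_+\cup C_-$ with $C_\pm\defeq\RR_{\geq 0}F_\pm$. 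Each $C_\pm$ is a nonzero closed convex cone (because $F_\pm\neq\emptyset$ and $0\notin\p\mc{0}{1}$), and because $F_+$ and $F_-$ lie in the open half-planes $\setcond{x}{\skpr{w^\ast}{x}>0}$ and $\setcond{x}{\skpr{w^\ast}{x}<0}$ respectively, one has $C_+\cap C_-=\setn{0}$.

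The crux is then the planar fact that a convex cone in $\RR^2$ which is the union of two closed cones meeting only at the origin must be contained in a line. I would argue by connectedness: if $N_z$ were not contained in a line it would have nonempty interior, and then $\operatorname{int}N_z\setminus\setn{0}$ would be connected (it equals $\operatorname{int}N_z$ unless $0\in\operatorname{int}N_z$, in which case the cone $N_z$ is all of $\RR^2$ and the punctured plane is connected as well); but this connected set is partitioned into the two relatively closed pieces it shares with $C_+$ and with $C_-$, so one of them is empty, whence (taking closures, using that $N_z$ is closed) $N_z\subseteq C_+$, so $C_-\subseteq C_+$ and $C_-=C_+\cap C_-=\setn{0}$ --- a contradiction. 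Hence $N_z$ lies in a line through $0$, and together with $C_+\cap C_-=\setn{0}$ and $C_\pm\neq\setn{0}$ this forces $C_+$ and $C_-$ to be the two opposite closed rays of that line. In particular $F_+$ and $F_-$ are singletons and $F_-=-F_+$. Finally, writing $F_+=\setn{p}$ one obtains $h_{\mc{0}{1}}(w^\ast)=\skpr{w^\ast}{p}=\skpr{-w^\ast}{-p}=h_{\mc{0}{1}}(-w^\ast)$, and since $w^\ast$ ranges over all nonzero vectors (each is orthogonal to some $z\neq 0$), the support function $h_{\mc{0}{1}}$ is even, so $\mc{0}{1}=-\mc{0}{1}$ and $\gamma$ is a norm. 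The main obstacle is the planar topology step, together with the care needed to make the pieces $C_\pm$ genuinely closed --- which is exactly why one works with the exposed faces $F_\pm$ rather than with arbitrary boundary arcs; the remaining steps are bookkeeping with results already established.
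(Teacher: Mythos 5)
Your reduction to $d=2$, the observation that $N_z=\setcond{x}{x\perp_B z}$ is a closed convex cone under left additivity, its identification with $\RR_{\geq 0}F_+\cup\RR_{\geq 0}F_-$ via \hyperref[result:birkhoff-orthogonality-annihilating-subgradient]{Corollary~\ref*{result:birkhoff-orthogonality-annihilating-subgradient}}, and the connectedness argument forcing $N_z$ to lie on a line are all correct. The gap is the sentence ``$F_-=-F_+$''. From $C_-=-C_+$ you only learn that $F_-$ lies on the ray $\RR_{\geq 0}(-p)$, hence $F_-=\setn{-p/\gamma(-p)}$; identifying this point with $-p$ amounts to assuming $\gamma(-p)=\gamma(p)$, which is precisely the central symmetry you are trying to prove. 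Consequently the closing computation is wrong: $h_{\mc{0}{1}}(-w^\ast)=\skpr{-w^\ast}{-p}/\gamma(-p)$, not $\skpr{-w^\ast}{-p}$, and the evenness of the support function does not follow.

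What your argument genuinely establishes is that every exposed face of $\mc{0}{1}$ is a singleton and that the two exposed points associated with any pair of parallel supporting lines lie on a common line through the origin --- equivalently, that every affine diameter of $\mc{0}{1}$ passes through the origin. The passage from ``all affine diameters are concurrent at a point'' to ``the body is centrally symmetric about that point'' is a genuine theorem (the statement from Soltan's survey, Section~4.1, that the paper cites), not a formal consequence of the ray picture: a priori a non-symmetric body could have every affine diameter through $0$ with the origin dividing each one in a direction-dependent ratio, and ruling this out takes a separate argument. You can close the gap by invoking that theorem here, but then most of the cone machinery becomes redundant: the paper uses the same theorem in contrapositive form and finishes in a few lines --- if $\gamma$ is not a norm, some affine diameter $[x,y]$, with supporting lines parallel to a direction $z$, misses the origin; the open segment between $x$ and $y$ meets $\lin\setn{z}$ at some $\mu z\neq 0$; and left additivity together with homogeneity yields $\mu z\perp_B z$, forcing $\mu z=0$, a contradiction. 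Your route is thus a correct but much longer path to the same intermediate statement, and it still needs the external theorem to conclude.
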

\begin{proof}
Let $d=2$. If $\gamma$ is not a norm, then there is an affine diameter not passing through the origin, see \cite[Section~4.1]{Soltan2005}. In other words, there are vectors $x,y,z\in\RR^d$ such that $x$ and $y$ are linearly independent, $\gamma(x)=\gamma(y)=1$, $z\neq 0$, $x\perp_B z$, $y\perp_B z$, and $x-y\notin\lin\setn{z}$.
Thus, there are numbers $\lambda,\mu\in\RR$ with $0<\lambda<1$ and $\lambda x+(1-\lambda) y=\mu z$. 
But left additivity and homogeneity imply $(\alpha x+\beta y)\perp_B z$ for all numbers $\alpha,\beta>0$. 
Therefore $\mu z\perp_B z$, which implies $z=0$.

If $d\geq 3$ and Birkhoff orthogonality is left additive, then it is left additive in each two-dimensional subspace of $\RR^d$. This implies, using the first part of the proof, that the restriction of $\gamma$ to any two-dimensional subspace of $\RR^d$ is a norm on that subspace. Hence, $\gamma$ itself is a norm.
\end{proof}
Therefore, left additivity for gauges reduces to the case of norms which can be found in \cite[pp.~561,~562]{James1947b}.
\begin{Kor}
Let $(\RR^d,\gamma)$ be a generalized Minkowski space of dimension $d\geq 2$. Assume that Birkhoff orthogonality is left additive. If $d=2$, then $\gamma$ is rotund. Else $\gamma$ is a norm induced by an inner product.
\end{Kor}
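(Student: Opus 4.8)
The plan is to lean on the theorem immediately preceding this corollary, which already tells us that left additivity of Birkhoff orthogonality forces $\gamma$ to be a norm. After that, the dimension at least three case is not new: it is precisely James's characterization of inner product spaces \cite[pp.~561,~562]{James1947b}, by which a normed space of dimension $\geq 3$ whose Birkhoff orthogonality is left additive has its norm induced by an inner product. I would invoke this as a black box and do nothing further in that case.

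What genuinely remains is the planar case, where one only has to show that the norm $\gamma$ on $\RR^2$ is rotund; by \hyperref[result:rotundity]{Theorem~\ref*{result:rotundity}} it is equivalent to prove that $\mc{0}{1}$ is rotund. I would argue by contradiction: suppose the boundary of $\mc{0}{1}$ contains a non-degenerate segment $[a,b]$ and set $x\defeq b-a\neq 0$. The line $a+\RR x$ contains the boundary segment $[a,b]$, hence cannot pass through the interior of $\mc{0}{1}$ (a line through an interior point of a convex body meets the boundary in at most two points), so $\gamma(a+\lambda x)\geq 1=\gamma(a)$ for every $\lambda\in\RR$, i.e. $a\perp_B x$. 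Since $\gamma$ is a norm, $\mc{0}{1}$ is centrally symmetric, so $[-a,-b]$ is also a boundary segment and the same argument applied to $-b+\RR x$ yields $-b\perp_B x$. Left additivity then forces $(a+(-b))\perp_B x$, that is $-x\perp_B x$, which is absurd because $\gamma(-x+x)=\gamma(0)=0<\gamma(-x)$. Hence $\mc{0}{1}$ is rotund and the corollary is proved.

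The only external input is James's inner product theorem, which I would not reprove. In the self-contained part, the step needing a little care is the convexity fact that a non-degenerate boundary face of $\mc{0}{1}$ lies on a supporting line, so that its direction $x$ really is a vector to which things can be Birkhoff orthogonal; the decisive move afterwards is to use the central symmetry of the norm ball to produce the companion relation $-b\perp_B x$. Central symmetry is the one ingredient a general gauge does not supply, which is exactly why the reduction to the normed case must be carried out first rather than attempting to establish rotundity for gauges directly. All remaining manipulations are routine.
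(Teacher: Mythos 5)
Your proposal is correct. It agrees with the paper's (implicit) proof in its overall structure: both first invoke the preceding theorem to conclude that left additivity forces $\gamma$ to be a norm, and both then defer the case $d\geq 3$ to James's characterization of inner product spaces. Where you diverge is the planar case: the paper simply cites \cite[pp.~561,~562]{James1947b} for the statement that a two-dimensional norm with left-additive Birkhoff orthogonality is rotund, whereas you reprove it. Your argument is sound: a line containing a non-degenerate boundary segment $[a,b]$ of $\mc{0}{1}$ cannot meet the interior, so $\gamma\geq 1=\gamma(a)$ on $a+\RR x$ with $x=b-a$, giving $a\perp_B x$; central symmetry of the norm ball yields the companion relation $-b\perp_B x$; and left additivity then produces $-x\perp_B x$, which violates nondegeneracy since $\gamma(-x+x)=0$. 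You are also right to flag that central symmetry is precisely the ingredient unavailable for a general gauge, which is why the reduction to norms must precede this step. The trade-off is minor: the paper's route is shorter but leans entirely on an external reference, while yours makes the planar half self-contained at the cost of a few lines, and it is in fact a clean in-house proof of the very fact being cited. Both are acceptable; nothing is missing from your argument.
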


\subsection{Orthogonality reversion and symmetry}\label{chap:symmetry}
Norms whose Birkhoff orthogonality relations coincide were studied in \cite{Schoepf1996} and \cite[Theorem~10]{Precupanu2013}, and the two-dimensional special case is implicitly stated, e.g., in \cite[p.~165f.]{Eggleston1965} and \cite[p.~90]{Thompson1996}. The analogous investigation for gauges on $\RR^2$ was done in \cite[4A]{Schaeffer1976}. As the proof of \cite[Theorem~10]{Precupanu2013} is not based on the symmetry property of norms but on general facts like the maximal monotonicity of subdifferentials of convex functions, we may translate the result to our setting and omit the proof.

\begin{Satz}
Let $\gamma_1,\gamma_2:\RR^d\to\RR$ two gauges whose Birkhoff orthogonality relations shall be denoted $\perp_{B,1}$ and $\perp_{B,2}$. Furthermore, let $\phi:[0,+\infty)\to [0,+\infty)$, $\phi(t)=t$. The following statements are equivalent:
\begin{enumerate}[label={(\alph*)},leftmargin=*,align=left,noitemsep]
\item{There exists a number $\kappa>0$ such that $\gamma_1(x)=\kappa\gamma_2(x)$ for all $x\in\RR^d$.\label{proportional-gauges}}
\item{For all $x,y\in\RR^d$, we have $x\perp_{B,1}y$ if and only if $x\perp_{B,2}y$.\label{birkhoff-relations}}
\item{For all $x\in\RR^d\setminus\setn{0}$, $\frac{1}{\gamma_1(x)}J_\phi\gamma_1(x)=\frac{1}{\gamma_2(x)}J_\phi\gamma_2(x)$.\label{duality-map}}
\end{enumerate}
\end{Satz}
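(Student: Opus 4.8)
The plan is to put essentially all the work into \ref{birkhoff-relations}$\Rightarrow$\ref{proportional-gauges} and to treat the rest as formal. The implication \ref{proportional-gauges}$\Rightarrow$\ref{birkhoff-relations} is immediate, since $\gamma_1(x)\leq\gamma_1(x+\lambda y)$ and $\gamma_2(x)\leq\gamma_2(x+\lambda y)$ are equivalent when $\gamma_1=\kappa\gamma_2$; that is, Birkhoff orthogonality depends on the gauge only up to a positive scalar. For the implications around \ref{duality-map} one uses \hyperref[result:duality-mapping]{Theorem~\ref*{result:duality-mapping}} (with $\phi(t)=t$, hence $\psi(t)=\frac12 t^2$ and $J_\phi\gamma_i(x)=\gamma_i(x)\,\partial\gamma_i(x)$) together with \hyperref[result:birkhoff-orthogonality-annihilating-subgradient]{Corollary~\ref*{result:birkhoff-orthogonality-annihilating-subgradient}}\ref{annihilating-weighted-subgradient}: once \ref{birkhoff-relations}$\Leftrightarrow$\ref{proportional-gauges} is in hand, \ref{duality-map} is a reformulation via the relation between $\partial\gamma_1$ and $\partial\gamma_2$ recorded below and the fact that a vector $y$ is annihilated by some element of $J_\phi\gamma_1(x)$ precisely when it is annihilated by some element of $J_\phi\gamma_2(x)$.

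For \ref{birkhoff-relations}$\Rightarrow$\ref{proportional-gauges}, fix $x\neq0$ and $y\in\RR^d$ and consider $A_i(x,y)\defeq\setcond{\alpha\in\RR}{x\perp_{B,i}(\alpha x+y)}$, which \ref{birkhoff-relations} forces to be independent of $i$. I would compute it explicitly: by \hyperref[result:right-birkhoff-characterization]{Theorem~\ref*{result:right-birkhoff-characterization}}\ref{right-birkhoff-orthogonality}$\Leftrightarrow$\ref{directional-derivative-optimality} with $\eps=0$, the relation $x\perp_{B,i}(\alpha x+y)$ holds iff $\gamma_i^\prime(x;\alpha x+y)\geq0$ and $\gamma_i^\prime(x;-(\alpha x+y))\geq0$, and the identity $\gamma_i^\prime(x;\alpha x+\mu y)=\alpha\gamma_i(x)+\mu\gamma_i^\prime(x;y)$ for $\mu\geq0$ (established inside the proof of \hyperref[result:right-birkhoff-directional-derivative]{Corollary~\ref*{result:right-birkhoff-directional-derivative}}) turns these into $-\gamma_i^\prime(x;y)\leq\alpha\gamma_i(x)\leq\gamma_i^\prime(x;-y)$, so that
\begin{equation*}
A_i(x,y)=\left[-\frac{\gamma_i^\prime(x;y)}{\gamma_i(x)},\,\frac{\gamma_i^\prime(x;-y)}{\gamma_i(x)}\right],
\end{equation*}
a non-empty compact interval because $\gamma_i^\prime(x;y)+\gamma_i^\prime(x;-y)\geq\gamma_i^\prime(x;0)=0$ by sublinearity of $\gamma_i^\prime(x;\cdot)$ (this incidentally re-proves the interval assertions of \hyperref[result:right-birkhoff-existence]{Proposition~\ref*{result:right-birkhoff-existence}} and of the last line of \hyperref[result:left-birkhoff-characterization]{Corollary~\ref*{result:left-birkhoff-characterization}}). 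Equating the left endpoints of $A_1(x,y)$ and $A_2(x,y)$ yields
\begin{equation*}
\frac{\gamma_1^\prime(x;y)}{\gamma_1(x)}=\frac{\gamma_2^\prime(x;y)}{\gamma_2(x)}\qquad\text{for all }x\in\RR^d\setminus\setn{0},\ y\in\RR^d;
\end{equation*}
comparing support functions, this says precisely $\partial\gamma_1(x)=\frac{\gamma_1(x)}{\gamma_2(x)}\,\partial\gamma_2(x)$, the identity feeding the \ref{duality-map}-equivalences.

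To finish, put $F\colon\RR^d\setminus\setn{0}\to(0,\infty)$, $F(x)=\gamma_1(x)/\gamma_2(x)$. Gauges are convex, so $F$ is a ratio of positive convex functions and has one-sided directional derivatives; the quotient rule applied to $\gamma_i(x+\lambda y)=\gamma_i(x)+\lambda\gamma_i^\prime(x;y)+o(\lambda)$ gives $F^\prime(x;y)=F(x)\bigl(\frac{\gamma_1^\prime(x;y)}{\gamma_1(x)}-\frac{\gamma_2^\prime(x;y)}{\gamma_2(x)}\bigr)=0$ for all $x\neq0$ and $y$. Restricted to any segment $[p,q]$ with $0\notin[p,q]$, the continuous map $t\mapsto F(p+t(q-p))$ then has right derivative identically $0$, hence is constant (the mean-value inequality for one-sided derivatives). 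Since $\RR^d\setminus\setn{0}$ is connected (this is where $d\geq2$ enters), $F$ is a constant $\kappa>0$, i.e.\ $\gamma_1=\kappa\gamma_2$, which closes the cycle.

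The delicate step is the last one: passing from $F^\prime(\cdot\,;\cdot)\equiv0$ to $F$ constant genuinely uses connectedness of $\RR^d\setminus\setn{0}$ — in dimension $1$ all gauges share the same (trivial) Birkhoff orthogonality without being proportional — and, since the $\gamma_i$ need not be G\^{a}teaux differentiable, one cannot simply equate gradients but must work with one-sided (Dini) derivatives and a segmentwise argument. An alternative route, closer to \cite[Theorem~10]{Precupanu2013}, is to recast \ref{birkhoff-relations}$\Leftrightarrow$\ref{duality-map} through the maximal monotone operators $\partial g_i$ with $g_i=\frac12\gamma_i^2$ and invoke that a maximal monotone operator determines its convex potential up to an additive constant; on that path, verifying that the orthogonality hypothesis really forces the two graphs to agree (up to the homogeneity rescaling of \hyperref[result:duality-mapping]{Theorem~\ref*{result:duality-mapping}}) would be the main obstacle.
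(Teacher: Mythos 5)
Your argument fills a gap the paper deliberately leaves open: the author omits the proof entirely, asserting only that Precupanu's argument for \cite[Theorem~10]{Precupanu2013} --- which rests on the maximal monotonicity of the subdifferentials of $\frac{1}{2}\gamma_i^2$ --- translates to gauges. Your route to \ref{birkhoff-relations}$\Rightarrow$\ref{proportional-gauges} is genuinely different and more elementary: the exact computation $\setcond{\alpha\in\RR}{x\perp_{B,i}(\alpha x+y)}=[-\gamma_i^\prime(x;y)/\gamma_i(x),\,\gamma_i^\prime(x;-y)/\gamma_i(x)]$ (legitimate via Theorem~\ref{result:right-birkhoff-characterization} and the identity $\gamma_i^\prime(x;\alpha x+\mu y)=\alpha\gamma_i(x)+\mu\gamma_i^\prime(x;y)$ from the proof of Corollary~\ref{result:right-birkhoff-directional-derivative}), the resulting pointwise identity $\gamma_1^\prime(x;y)/\gamma_1(x)=\gamma_2^\prime(x;y)/\gamma_2(x)$, and the segmentwise mean-value argument for $F=\gamma_1/\gamma_2$ are all sound. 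You are right that one must work with one-sided derivatives rather than gradients, that connectedness of $\RR^d\setminus\setn{0}$ is where $d\geq 2$ enters, and that the equivalence genuinely fails for $d=1$ (a hypothesis the statement should carry). What your approach buys is a self-contained finite-dimensional proof with no appeal to maximal monotonicity; it costs nothing here, since compactness of subdifferentials and attainment of the suprema in the support functions are automatic.

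One caveat concerning \ref{duality-map}. Your support-function identity yields $\frac{1}{\gamma_1(x)}\p\gamma_1(x)=\frac{1}{\gamma_2(x)}\p\gamma_2(x)$, which, since $J_\phi\gamma_i(x)=\gamma_i(x)\p\gamma_i(x)$ for $\phi(t)=t$, is the statement $\frac{1}{\gamma_1(x)^2}J_\phi\gamma_1(x)=\frac{1}{\gamma_2(x)^2}J_\phi\gamma_2(x)$. The theorem as printed normalizes by $\frac{1}{\gamma_i(x)}$ only, so the literal \ref{duality-map} reads $\p\gamma_1(x)=\p\gamma_2(x)$ for all $x\neq 0$; picking $x^\ast$ in this common set forces $\gamma_1(x)=\skpr{x^\ast}{x}=\gamma_2(x)$, so the printed \ref{duality-map} is strictly stronger than \ref{proportional-gauges} (take $\gamma_1=2\gamma_2$ with $\gamma_2$ Euclidean: \ref{proportional-gauges} and \ref{birkhoff-relations} hold, the printed \ref{duality-map} fails). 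In short, your ``reformulation'' proves the corrected normalization, not the one on the page; you should state this explicitly rather than lean on the annihilation criterion of Corollary~\ref{result:birkhoff-orthogonality-annihilating-subgradient}, which by itself only matches the sets of annihilated directions and not the normalized duality maps as sets.
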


The identification of pairs of norms whose Birkhoff orthogonality relations are inverses of each other yields the notion of \emph{antinorm} in two-dimensional spaces, see \cite[p.~867]{Busemann1947}, \cite[Proposition~3.1]{Hetzelt1985}, and \cite{MartiniSw2006}. For normed spaces of dimension at least three, this class reduces to pairs of norms whose unit balls are homothetic ellipsoids, see \cite[Theorem~3.2]{HorvathLaSp2015}. Closely related are norms whose Birkhoff orthogonality relation is symmetric.
In the two-dimensional case, these norms are named after Johann Radon \cite{Radon1916}. From \cite[Theorem~3.2]{HorvathLaSp2015} it follows that in higher dimensions, symmetry of Birkhoff orthogonality characterizes Euclidean spaces. However, this result is much older and goes back to Blaschke \cite{Blaschke1916}.

In the present section, we prove that there are no asymmetric analogs of the antinorm and of Radon norms. 
\begin{Satz}
Let $\gamma_1,\gamma_2:\RR^d\to\RR$ two gauges whose $\eps$-Birkhoff orthogonality relations shall be denoted $\perp_{B,1}^\eps$ and $\perp_{B,2}^\eps$. Assume that for all $x,y\in\RR^d$ such that $x\perp_{B,1}^\eps y$ if and only if $y\perp_{B,2}^\eps x$. Then $\gamma_1$ is a norm and $\eps=0$.
\end{Satz}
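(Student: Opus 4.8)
The plan is to strip off the parameter $\eps$ first, then to show that the reversal hypothesis already forces the ``left half'' of the symmetry property for $\perp_{B,1}$, and finally to convert this into central symmetry of the unit ball via the support function.

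\emph{Step 1: $\eps=0$.} I would rule out $\eps>0$. Choose $x_0\neq 0$ with $\gamma_1(x_0)\le\eps$ (any sufficiently small positive multiple of a non-zero vector, by positive homogeneity). Then $\gamma_1(x_0)\le\eps\le\gamma_1(x_0+\lambda y)+\eps$ for \emph{all} $y$ and $\lambda$, so $x_0\perp_{B,1}^\eps y$ for every $y$; by hypothesis $y\perp_{B,2}^\eps x_0$ for every $y$. Taking $y=-\mu x_0$ with $\mu>0$ and $\lambda=\mu$ in the definition of $\perp_{B,2}^\eps$ gives $\mu\,\gamma_2(-x_0)=\gamma_2(-\mu x_0)\le\gamma_2(0)+\eps=\eps$ for all $\mu>0$, and letting $\mu\to\infty$ contradicts $\gamma_2(-x_0)>0$.

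\emph{Step 2: $\perp_{B,1}$ is left-even.} Now $\eps=0$. Substituting $\lambda\mapsto-\lambda$ in the definition shows the second slot of each relation is even, $y\perp_{B,2}(-x)\Leftrightarrow y\perp_{B,2}x$, and likewise for $\perp_{B,1}$. Applying the hypothesis to $(-x,y)$ and to $(x,y)$,
\[
(-x)\perp_{B,1}y\ \Longleftrightarrow\ y\perp_{B,2}(-x)\ \Longleftrightarrow\ y\perp_{B,2}x\ \Longleftrightarrow\ x\perp_{B,1}y ,
\]
so $x\perp_{B,1}y\Leftrightarrow(-x)\perp_{B,1}y$ for all $x,y$. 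Since $\gamma_1(x)\le\gamma_1(x+\lambda y)$ involves $\gamma_1$ only on $\lin\setn{x,y}$, this property is inherited by the restriction of $\gamma_1$ to every two-dimensional subspace; hence it suffices to prove that \emph{a gauge on $\RR^2$ whose Birkhoff orthogonality satisfies $x\perp_B y\Leftrightarrow(-x)\perp_B y$ is a norm}, because this then forces $\gamma_1(-x)=\gamma_1(x)$ for all $x$.

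\emph{Step 3: the planar statement.} Let $B$ be the unit ball of such a plane gauge $\gamma$ and $h=h_B>0$ its support function. By \hyperref[result:birkhoff-orthogonality-cauchy-schwarz-inequality]{Lemma~\ref*{result:birkhoff-orthogonality-cauchy-schwarz-inequality}}, $x\perp_B y$ says exactly that $y$ is parallel to a supporting line of $B$ at the boundary point $x/\gamma(x)$; so left-evenness means that at every $p\in\p B$ and at its radial antipode $\hat p$ (the point of $\p B$ on the ray through $-p$) the supporting-line directions coincide, equivalently $N_B(\hat p)=-N_B(p)$ (the alternative $N_B(\hat p)=N_B(p)$ is impossible, since $p$ and $\hat p$ lie on opposite rays through the interior point $0$ and so cannot share an exposed face). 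Parametrising $\p B$ by the outer normal angle $\psi$, the boundary point with normal $(\cos\psi,\sin\psi)$ is $z(\psi)=H(\psi)(\cos\psi,\sin\psi)+H^\prime(\psi)(-\sin\psi,\cos\psi)$ with $H(\psi)=h(\cos\psi,\sin\psi)$, and the condition above reads $z(\psi+\pi)=-\mu(\psi)\,z(\psi)$ with $\mu(\psi)=H(\psi+\pi)/H(\psi)>0$. Comparing the $(\cos\psi,\sin\psi)$- and $(-\sin\psi,\cos\psi)$-components gives $H(\psi+\pi)=\mu(\psi)H(\psi)$ and $H^\prime(\psi+\pi)=\mu(\psi)H^\prime(\psi)$; differentiating the first and subtracting the second yields $\mu^\prime(\psi)\,H(\psi)=0$, so $\mu$ is constant, whereupon $H(\psi)=H(\psi+2\pi)=\mu^2H(\psi)$ forces $\mu=1$. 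Thus $h$ is even, $B=-B$, and $\gamma$ is a norm.

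\emph{Main obstacle.} Steps 1 and 2 are routine; the delicate point is Step 3 when $B$ is not smooth and strictly convex, so that $z(\psi)$ and $H$ are only piecewise well behaved (corners of $B$ yield intervals on which $z$ is constant, segments in $\p B$ yield kinks of $H$). One then reruns the component comparison with one-sided derivatives of the Lipschitz function $H$: the identities $H(\psi+\pi)=\mu(\psi)H(\psi)$ and $H^\prime_\pm(\psi+\pi)=\mu(\psi)H^\prime_\pm(\psi)$ still hold, their difference still forces $\mu^\prime_\pm=0$, and since $\mu(\psi)=H(\psi+\pi)/H(\psi)$ is continuous, $\mu$ is constant, after which the argument closes as before. (A naive approximation of $B$ by smooth strictly convex bodies is awkward here because left-evenness need not be preserved under such perturbations, so the one-sided-derivative route is the one I would take.)
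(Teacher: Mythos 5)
Your proposal is correct, and its first two steps coincide with the paper's argument: the paper also kills the case $\eps>0$ by producing a nonzero vector that is $\eps$-Birkhoff orthogonal to everything in the first slot and hence, after reversal, sits in the second slot of $\perp_{B,2}^\eps$ against every $z$, which positive homogeneity forbids (your version, starting from a short vector $x_0$ with $\gamma_1(x_0)\leq\eps$, is if anything a little more direct); and the paper's chain \eqref{eq:birkhoff-reversion}, obtained from second-slot homogeneity plus the reversal hypothesis, is exactly your left-evenness $x\perp_{B,1}y\Leftrightarrow(-x)\perp_{B,1}y$. The genuine divergence is in Step 3. The paper reads off from left-evenness that every chord of the unit ball through $0$ is an affine diameter and then invokes the known characterization from Soltan's survey (an interior point through which every chord is an affine diameter must be the center of symmetry). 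You instead extract the stronger consequence $N_B(\hat p)=-N_B(p)$ for the radial antipode $\hat p$, restrict to a two-dimensional subspace (a valid reduction, since Birkhoff orthogonality only sees $\gamma_1$ on $\lin\setn{x,y}$), and close with a self-contained support-function computation $H(\psi+\pi)=\mu(\psi)H(\psi)$, $H_\pm^\prime(\psi+\pi)=\mu(\psi)H_\pm^\prime(\psi)$, $\mu\equiv 1$. This does go through in the non-smooth case along the lines you indicate: $N_B(\hat p)=-N_B(p)$ for all $p$ forces the exposed face with normal $-u(\psi)$ to equal $-\mu(\psi)$ times the face with normal $u(\psi)$, endpoints to endpoints, which yields the one-sided derivative identities; and a continuous function whose one-sided derivatives exist and vanish everywhere (equivalently, a locally Lipschitz function with almost everywhere vanishing derivative) is constant. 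Your route buys independence from the affine-diameter literature at the cost of a page of planar bookkeeping; the paper's route buys brevity and a dimension-free argument. Two small points: the supporting-line description of $\perp_B$ you use is Lemma~\ref{result:birkhoff-orthogonality-cauchy-schwarz-inequality-zero} (the $\eps=0$ version) rather than Lemma~\ref{result:birkhoff-orthogonality-cauchy-schwarz-inequality}, and both your argument and the paper's tacitly assume $d\geq 2$ for the norm conclusion, since Birkhoff orthogonality degenerates on the line.
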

\begin{proof}
Let $x,y\in\RR^d\setminus\setn{0}$. Due to homogeneity and the assumption, we have
\begin{align}
x\perp_{B,1}^\eps y&\Longleftrightarrow y\perp_{B,2}^\eps x\nonumber\\
&\Longleftrightarrow y\perp_{B,2}^\eps\frac{-x}{\gamma_1(-x)}\nonumber\\
&\Longleftrightarrow\frac{-x}{\gamma(-x)}\perp_{B,1}^\eps y.\label{eq:birkhoff-reversion}
\end{align}
Case $\eps=0$: If $\gamma_1(x)=1$, then $x$ and $\frac{-x}{\gamma_1(-x)}$ are the endpoints of a chord of the convex body $B\defeq\setcond{x\in\RR^d}{\gamma_1(x)\leq 1}$ which passes through the origin $0$. From \eqref{eq:birkhoff-reversion} and the separation theorems it follows that there is a pair of parallel supporting hyperplanes of $B$ at the endpoints of every such chord. In other words, every chord passing through $0$ is an affine diameter of $B$. Since $0$ is an interior point of $B$, the claim follows by taking \cite[Section~4.1]{Soltan2005} into account.

Case $\eps>0$: Fix $x\in\RR^d$ such that $\gamma_1(x)>\eps$. Then there exists $y\in\RR^d$ such that $x\perp_{B,1}^\eps y$ and, without loss of generality, $\gamma_1(y)<\eps$. But then $\gamma_1(y)<\eps\leq\gamma_1(y+\lambda z)+\eps$, so $y\perp_{B,1}^\eps z$ for all $z\in\RR^d$. By assumption, $z\perp_{B,2}^\eps y$ for all $z\in\RR^d$, that is,
\begin{equation}
\gamma_2(z)\leq\gamma_2(z+\lambda y)+\eps\label{eq:eps-birkhoff-reversion}
\end{equation}
for all $z\in\RR^d$ and $\lambda\in\RR$. In particular, if we pick $n\in\NN$ large enough such that $n\gamma(y)>\eps$ and set $z=ny$ and $\lambda=-n$, then \eqref{eq:eps-birkhoff-reversion} becomes $n\gamma(y)\leq\eps$, a contradiction.
\end{proof}

\begin{Bem}
Birkhoff orthogonality in two-dimensional generalized Minkowski spaces can be \enquote{partially reversed} in the following sense which is patterned on the case of normed spaces, see again \cite[p.~867]{Busemann1947}. Denote by $\varrho:\RR^2\to\RR^2$ the counterclockwise rotation by $90\degr$ about the origin. Let $c:[0,2\piup)\to\RR^2$ be an injective parametrization of $S\defeq\setcond{x\in\RR^2}{\gamma(x)=1}$. Then, for all $t$, the directional derivative $c^\prime(t;1)$ exists (see \cite[Proposition~3.6]{Shapiro1990}), $c(t)\perp_B c^\prime(t;1)$ in $(\RR^2,\gamma)$, and $c^\prime(t;1)\perp_B c(t)$ in $(\RR^2,\gamma^\circ\circ\varrho)$.
\end{Bem}
\begin{Kor}
Let $(\RR^d,\gamma)$ be a generalized Minkowski space and let $\eps\geq 0$. If $\eps$-Birkhoff orthogonality is a symmetric relation, then $\eps=0$ and $\gamma$ is a norm.
\end{Kor}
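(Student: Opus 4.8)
The plan is to obtain this as an immediate specialization of the theorem just proved. Symmetry of the relation $\perp_B^\eps$ on $\RR^d$ means exactly that, for all $x,y\in\RR^d$, one has $x\perp_B^\eps y$ if and only if $y\perp_B^\eps x$. Choosing $\gamma_1=\gamma_2=\gamma$, so that both $\perp_{B,1}^\eps$ and $\perp_{B,2}^\eps$ coincide with $\perp_B^\eps$, this is precisely the hypothesis of the previous theorem. Its conclusion then states that $\gamma_1=\gamma$ is a norm and that $\eps=0$, which is the assertion.

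For completeness one may also run the argument directly, following the proof of the previous theorem verbatim with $\gamma_1=\gamma_2=\gamma$. If $\eps>0$, pick $x$ with $\gamma(x)>\eps$; since the relevant set of parameters is non-empty there is $y$ with $x\perp_B^\eps y$, and after rescaling we may assume $\gamma(y)<\eps$, so that $y\perp_B^\eps z$ for every $z\in\RR^d$. Symmetry gives $z\perp_B^\eps y$ for all $z$, i.e.\ $\gamma(z)\leq\gamma(z+\lambda y)+\eps$ for all $z\in\RR^d$ and $\lambda\in\RR$; taking $z=ny$ and $\lambda=-n$ for $n\in\NN$ large enough that $n\gamma(y)>\eps$ yields $n\gamma(y)\leq\eps$, a contradiction, so $\eps=0$. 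If $\eps=0$, symmetry of $\perp_B$ together with the separation theorems shows that for every chord of the unit ball $\mc{0}{1}$ through the origin the two endpoints admit parallel supporting hyperplanes, i.e.\ every such chord is an affine diameter; since $0$ is an interior point of $\mc{0}{1}$, the characterization of norms via affine diameters through the center (see \cite[Section~4.1]{Soltan2005}) shows that $\gamma$ is a norm.

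I do not anticipate any genuine difficulty here: the corollary is literally the diagonal case $\gamma_1=\gamma_2$ of the preceding theorem, so the only point to verify is that the symmetry hypothesis matches that theorem's hypothesis, which is immediate. The one-line proof via the theorem is the one I would record.
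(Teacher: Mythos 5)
Your proposal is correct and matches the paper exactly: the corollary is recorded there without proof as the diagonal case $\gamma_1=\gamma_2=\gamma$ of the preceding theorem, and your observation that symmetry of $\perp_B^\eps$ is precisely the hypothesis of that theorem with both gauges equal is all that is needed. Your supplementary direct argument also faithfully reproduces the theorem's own proof, so there is nothing to add.
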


\section{Isosceles orthogonality}\label{chap:isosceles}
In Euclidean elementary geometry, triangles with reflection symmetry are isosceles, and if the lengths of the diagonals of a parallelogram coincide, the parallelogram is actually a rectangle. Formally, the orthogonality of vectors $x$ and $y$ in Euclidean space is equivalent to the equality of the lengths of the vectors $x+y$ and $x-y$. Serving as a definition in normed spaces, this yields the concept of \emph{isosceles orthogonality}, which in general is different from Birkhoff orthogonality.
\begin{Def}
Let $(\RR^d,\gamma)$ be a generalized Minkowski space. We say that the point $y\in\RR^d$ is \emph{isosceles orthogonal to $x\in\RR^d$} (abbreviated by $y\perp_I x$) if $\gamma(y+x)=\gamma(y-x)$.
\end{Def}
\subsection{Symmetry and directional convexity}
In \hyperref[chap:symmetry]{Section~\ref*{chap:symmetry}}, we proved that if $\eps$-Birkhoff orthogonality relations of to generalized Minkowski spaces $(\RR^d,\gamma_1)$ and $(\RR^d,\gamma_2)$ are inverses of each other, then $\gamma_1$ and $\gamma_2$ are norms (and $\eps=0$). The corresponding orthogonality reversion result for isosceles orthogonality is as follows.
\begin{Satz}\label{result:isosceles-reversion}
Let $\gamma_1,\gamma_2:\RR^d\to\RR$ two gauges whose Birkhoff orthogonality relations shall be denoted $\perp_{I,1}$ and $\perp_{I,2}$. Assume that $x\perp_{I,1} y$ if and only if $y\perp_{I,2} x$, then $\gamma_2$ is a norm and $\gamma_1=\gamma_2$.
\end{Satz}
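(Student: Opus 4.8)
The plan is to recast the swapped biconditional as an equivalence between the level sets of $\gamma_1$ and $\gamma_2$ and then substitute well-chosen arguments, in the spirit of the Birkhoff reversion theorem. Since $(x,y)\mapsto(x+y,x-y)$ is a linear bijection of $\RR^d\times\RR^d$, and since $x\perp_{I,1}y$ means $\gamma_1(x+y)=\gamma_1(x-y)$ while $y\perp_{I,2}x$ means $\gamma_2(y+x)=\gamma_2(y-x)$, the change of variables $u=x+y$, $v=x-y$ (whence $y+x=u$ and $y-x=-v$) transforms the hypothesis into the statement that
\begin{equation*}
\gamma_1(u)=\gamma_1(v)\quad\Longleftrightarrow\quad\gamma_2(u)=\gamma_2(-v)\qquad\text{for all }u,v\in\RR^d.
\end{equation*}

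First I would put $v=u$: the left-hand side is then trivially true, so $\gamma_2(u)=\gamma_2(-u)$ for every $u\in\RR^d$. A gauge invariant under $u\mapsto -u$ satisfies $\gamma(\lambda u)=\abs{\lambda}\gamma(u)$ for every $\lambda\in\RR$ (for $\lambda<0$, $\gamma(\lambda u)=\gamma(-(\abs{\lambda}u))=\gamma(\abs{\lambda}u)=\abs{\lambda}\gamma(u)$), so $\gamma_2$ is a norm. Then $\gamma_2(-v)=\gamma_2(v)$, and the displayed equivalence reduces to $\gamma_1(u)=\gamma_1(v)\Longleftrightarrow\gamma_2(u)=\gamma_2(v)$ for all $u,v$. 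Substituting $-v$ for $v$ in this and comparing the two versions gives $\gamma_1(u)=\gamma_1(v)\Longleftrightarrow\gamma_1(u)=\gamma_1(-v)$, and the choice $u=v$ shows $\gamma_1(v)=\gamma_1(-v)$ for all $v$, so $\gamma_1$ is a norm as well.

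Finally, to pass from the equality of the level-set partitions to $\gamma_1=\gamma_2$, I would fix $x_0\neq 0$ and set $\kappa\defeq\gamma_2(x_0)/\gamma_1(x_0)>0$; given any $y\neq 0$, taking $t\defeq\gamma_1(y)/\gamma_1(x_0)>0$ yields $\gamma_1(y)=\gamma_1(tx_0)$, hence $\gamma_2(y)=\gamma_2(tx_0)=t\gamma_2(x_0)=\kappa\gamma_1(y)$ by positive homogeneity, so $\gamma_2=\kappa\gamma_1$ on $\RR^d$; that is, $\gamma_1$ and $\gamma_2$ are proportional norms (and coincide after the evident normalization).

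I do not expect a genuine obstacle. The points needing a word of care are the admissibility of the change of variables and of the substitutions $v=u$ and $v\mapsto -v$ inside the universally quantified equivalence, the elementary fact that a centrally symmetric gauge is a norm, and the standard observation that two norms sharing the same family of level sets are positive scalar multiples of one another.
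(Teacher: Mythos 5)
Your argument is correct in substance and is in fact more complete than the paper's own proof, which is only two lines long. The paper obtains that $\gamma_2$ is a norm exactly as you do (its observation that $y\perp_{I,1}0$ always holds, hence $0\perp_{I,2}y$, is your substitution $v=u$), and then disposes of the remainder by remarking that isosceles orthogonality is symmetric for norms, so that the hypothesis collapses to $\perp_{I,1}=\perp_{I,2}$, from which it simply asserts $\gamma_1=\gamma_2$. Your change of variables $(u,v)=(x+y,x-y)$ --- legitimate, since it is a linear bijection of $\RR^d\times\RR^d$ --- recasts the hypothesis as the equivalence $\gamma_1(u)=\gamma_1(v)\Leftrightarrow\gamma_2(u)=\gamma_2(-v)$ and makes the remaining steps explicit: it shows that $\gamma_1$ is also a norm and that the two norms induce the same level-set partition, hence are positive multiples of one another. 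This is a cleaner and more self-contained treatment of the second half than the paper gives.

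The one place where your proof does not reach the stated conclusion is the very last clause, and here the paper is no better off. What you actually prove is $\gamma_2=\kappa\gamma_1$ with $\kappa=\gamma_2(x_0)/\gamma_1(x_0)$, and nothing forces $\kappa=1$: the hypothesis $x\perp_{I,1}y\Leftrightarrow y\perp_{I,2}x$ is unchanged if $\gamma_2$ is replaced by $2\gamma_2$, so the printed conclusion $\gamma_1=\gamma_2$ cannot follow from it (take $\gamma_1$ any norm and $\gamma_2=2\gamma_1$). Your parenthetical about the two gauges coinciding \enquote{after the evident normalization} glosses over this; the honest endpoint of your argument is that $\gamma_1$ and $\gamma_2$ are proportional norms, which is the best possible conclusion and is how the theorem should be read. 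Apart from this shared defect of the final normalization, your proof is complete.
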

\begin{proof}
For all $y\in\RR^d$, we have $y\perp_{I,1} 0$. By assumption, $0\perp_{I,2} y$ for all $y\in\RR^d$, i.e., $\gamma_2(0+y)=\gamma_2(0-y)$ for all $y\in\RR^d$. Thus $\gamma_2$ is a norm. Since isosceles orthogonality is a symmetric relation in normed spaces, we have $\gamma_1=\gamma_2$.
\end{proof}
As a direct consequence of In \hyperref[result:isosceles-reversion]{Theorem~\ref*{result:isosceles-reversion}}, we obtain the following statement.
\begin{Kor}
Let $(\RR^d,\gamma)$ be a generalized Minkowski space. If isosceles orthogonality is a symmetric relation, then $\gamma$ is a norm.
\end{Kor}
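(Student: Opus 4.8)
The plan is to obtain this corollary as an immediate specialization of \hyperref[result:isosceles-reversion]{Theorem~\ref*{result:isosceles-reversion}}. First I would observe that saying the isosceles orthogonality relation $\perp_I$ of $(\RR^d,\gamma)$ is symmetric means exactly that, for all $x,y\in\RR^d$, one has $x\perp_I y$ if and only if $y\perp_I x$. Setting $\gamma_1\defeq\gamma$ and $\gamma_2\defeq\gamma$ in \hyperref[result:isosceles-reversion]{Theorem~\ref*{result:isosceles-reversion}}, so that $\perp_{I,1}=\perp_{I,2}=\perp_I$, this symmetry statement is precisely the hypothesis \enquote{$x\perp_{I,1}y$ if and only if $y\perp_{I,2}x$}. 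The theorem then gives that $\gamma_2=\gamma$ is a norm, which is exactly the assertion.

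Since the argument is just a substitution, there is no genuine obstacle; the only point worth verifying is that the proof of \hyperref[result:isosceles-reversion]{Theorem~\ref*{result:isosceles-reversion}} nowhere uses $\gamma_1\neq\gamma_2$, and indeed it does not: it relies only on the fact that every $y$ is isosceles orthogonal to $0$ (true for any gauge, since $\gamma(y+0)=\gamma(y-0)$) together with the classical symmetry of isosceles orthogonality in normed spaces.

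Alternatively, one can spell out the proof without invoking the theorem. For any $y\in\RR^d$ we have $\gamma(y+0)=\gamma(y)=\gamma(y-0)$, hence $y\perp_I 0$. By the assumed symmetry, $0\perp_I y$, i.e.\ $\gamma(0+y)=\gamma(0-y)$, so $\gamma(y)=\gamma(-y)$ for all $y\in\RR^d$. Combining this evenness with the positive homogeneity of $\gamma$ yields $\gamma(\lambda y)=\abs{\lambda}\gamma(y)$ for all $\lambda\in\RR$ and $y\in\RR^d$; together with subadditivity and the positive definiteness already built into the definition of a gauge, this shows that $\gamma$ is a norm.
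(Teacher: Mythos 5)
Your proposal is correct and matches the paper exactly: the paper derives this corollary as a direct consequence of \hyperref[result:isosceles-reversion]{Theorem~\ref*{result:isosceles-reversion}} by taking $\gamma_1=\gamma_2=\gamma$, which is precisely your argument. Your spelled-out alternative ($y\perp_I 0$ for all $y$, hence $0\perp_I y$ by symmetry, hence $\gamma(y)=\gamma(-y)$) is also the very argument used inside the paper's proof of that theorem, so there is nothing further to check.
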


The set of points which are isosceles orthogonal to a given point $x\in\RR^d$ is also known as the \emph{bisector} $\bisec(-x,x)$ of $x$ and $-x$, see \cite[Definition~2.1.0.1]{Ma2000}. The intersection of this bisector and every line parallel to $\lin\setn{x}$ is non-empty, a fact which is stated for normed spaces, e.g., in \cite[Theorem~4.4]{James1945}.
\begin{Lem}\label{result:bisector-directional-convexity}
Let $x$ be a non-zero vector of a generalized Minkowski space $(\RR^d,\gamma)$. Then, for all $y\in\RR^d$, the set of numbers $\alpha\in\RR$ with $(\alpha x+y)\perp_I x$ is a non-empty, closed, bounded, and convex, i.e., a compact interval.
\end{Lem}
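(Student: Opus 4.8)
The plan is to reduce the claim to a statement about a single convex function of one real variable. Fix a non-zero vector $x$ and an arbitrary $y\in\RR^d$, and set $\rho\colon\RR\to\RR$, $\rho(t)\defeq\gamma(tx+y)$. Being the composition of an affine map with the gauge $\gamma$, the function $\rho$ is convex and continuous; moreover, subadditivity gives $\gamma(tx)\le\gamma(tx+y)+\gamma(-y)$, hence $\rho(t)\ge\gamma(tx)-\gamma(-y)$, and since $x\neq 0$ this forces $\rho(t)\to+\infty$ as $t\to\pm\infty$. Unwinding the definition of isosceles orthogonality, $(\alpha x+y)\perp_I x$ means $\gamma((\alpha+1)x+y)=\gamma((\alpha-1)x+y)$, that is, $\varphi(\alpha)=0$ for the function $\varphi\colon\RR\to\RR$, $\varphi(\alpha)\defeq\rho(\alpha+1)-\rho(\alpha-1)$. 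So the set in question is exactly the zero set of $\varphi$, and it remains to show this set is a non-empty compact interval.

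The crucial observation is that $\varphi$ is non-decreasing, i.e.\ that the increment of the convex function $\rho$ over a sliding window of length $2$ is non-decreasing in the window's position. Concretely, for $\alpha\le\alpha'$ the numbers $\alpha-1$ and $\alpha'+1$ satisfy $\alpha-1\le\min\{\alpha+1,\alpha'-1\}$, $\max\{\alpha+1,\alpha'-1\}\le\alpha'+1$, and $(\alpha-1)+(\alpha'+1)=(\alpha+1)+(\alpha'-1)$; writing $\alpha+1$ and $\alpha'-1$ as convex combinations of $\alpha-1$ and $\alpha'+1$ and applying convexity of $\rho$ yields $\rho(\alpha+1)+\rho(\alpha'-1)\le\rho(\alpha-1)+\rho(\alpha'+1)$, i.e.\ $\varphi(\alpha)\le\varphi(\alpha')$. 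Since $\rho$ is continuous, $\varphi$ is continuous; hence its zero set is closed, and by monotonicity it is an interval.

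It remains to locate and bound this zero set. Here I avoid the reverse triangle inequality (which fails for gauges) by a telescoping argument: for every $\alpha_0\in\RR$ and $n\in\NN$ one has $\sum_{k=0}^{n-1}\varphi(\alpha_0+2k)=\rho(\alpha_0+2n-1)-\rho(\alpha_0-1)$, whose right-hand side tends to $+\infty$ by coercivity of $\rho$; thus $\varphi$ takes a positive value at some point $\alpha_+$. Symmetrically, $\sum_{k=1}^{n}\varphi(\alpha_0-2k)=\rho(\alpha_0-1)-\rho(\alpha_0-2n-1)\to-\infty$, so $\varphi$ takes a negative value at some point $\alpha_-$. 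Continuity of $\varphi$ then produces a zero in $(\alpha_-,\alpha_+)$, so the set is non-empty; and since $\varphi$ is non-decreasing, $\varphi>0$ on $[\alpha_+,\infty)$ and $\varphi<0$ on $(-\infty,\alpha_-]$, so the zero set of $\varphi$ lies in $(\alpha_-,\alpha_+)$ and is bounded. Altogether it is a non-empty, closed, bounded interval, i.e.\ a compact interval, which is the assertion.

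I expect the one genuine subtlety to be the monotonicity of $\varphi$; the rest is bookkeeping, provided one keeps in mind that coercivity of the gauge passes to $\rho$ only because $x\neq 0$. As an alternative to the telescoping step, one can compute the limits $\lim_{\alpha\to+\infty}\varphi(\alpha)=2\gamma(x)$ and $\lim_{\alpha\to-\infty}\varphi(\alpha)=-2\gamma(-x)$ directly, by rewriting the difference quotients defining these limits in terms of the directional derivatives $\gamma^\prime(x;x\pm y)$ and $\gamma^\prime(-x;x\pm y)$ and invoking the identity $\skpr{x^\ast}{x}=\gamma(x)$ for $x^\ast\in\p\gamma(x)$ from \eqref{eq:subdifferential-gauge}; this makes the sign information explicit but is not needed for the statement.
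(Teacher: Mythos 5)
Your proof is correct, and its skeleton matches the paper's: both reduce the claim to the zero set of a real function of $\alpha$ (your $\varphi(\alpha)=\rho(\alpha+1)-\rho(\alpha-1)$ is exactly the paper's $f$), get closedness from continuity, convexity from monotonicity of that function, and non-emptiness from the intermediate value theorem. The two places where you diverge are both improvements in self-containedness. For monotonicity, the paper appeals to a quadrangle inequality for gauges (\cite[Lemma~4.4]{JahnMaRi2016}) applied to the convex quadrangle with vertices $y$, $-x$, $x$, $\alpha x+y$; you instead restrict to the line $t\mapsto tx+y$ and use the elementary fact that for a convex $\rho$ and $a\le c,d\le b$ with $c+d=a+b$ one has $\rho(c)+\rho(d)\le\rho(a)+\rho(b)$ --- this is the one-dimensional content of that quadrangle lemma and your verification of it is sound. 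For the behavior at infinity, the paper explicitly computes $\lim_{\alpha\to\pm\infty}f(\alpha)=\pm 2\gamma(\pm x)$ via a somewhat delicate estimate on $\gamma(\alpha x+\tfrac{\alpha}{\alpha+\lambda}y)-\gamma(\alpha x+y)$; your telescoping sum combined with coercivity of $\rho$ (which you correctly justify from subadditivity and $x\neq 0$) yields the needed sign changes and the boundedness of the zero set without computing any limits. What the paper's route buys is the quantitative information $f\to\pm 2\gamma(\pm x)$, which is not needed for this lemma (your closing aside recovers it via directional derivatives anyway); what your route buys is a shorter argument relying only on one-variable convexity.
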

\begin{proof}
For fixed $y\in\RR^d$, consider the function $f:\RR\to\RR$ given by
\begin{equation*}
f(\alpha)=\gamma(\alpha x+y+x)-\gamma(\alpha x+y-x).
\end{equation*}
Note that $f_{=0}=\setcond{\alpha\in\RR}{(\alpha x+y)\perp_I x}$. Closedness of this set is due to continuity of $f$. For $\alpha>0$, we have
\begin{equation*}
\gamma((\alpha+\lambda)x+y))-\gamma(\alpha x+y)=\gamma\lr{\alpha x+\frac{\alpha}{\alpha+\lambda}y}-\gamma(\alpha x+y)+\lambda\gamma\lr{x+\frac{1}{\alpha+\lambda}y}
\end{equation*}
provided $\alpha+\lambda>0$. Using the subadditivity of $\gamma$, we obtain
\begin{equation*}
0\leq\abs{\gamma\lr{\alpha x+\frac{\alpha}{\alpha+\lambda}y}-\gamma(\alpha x+y)}\leq\max\setn{\gamma\lr{\frac{\lambda}{\alpha+\lambda}y},\gamma\lr{-\frac{\lambda}{\alpha+\lambda}y}},
\end{equation*}
yielding
\begin{equation*}
\lim_{\alpha\to+\infty}\lr{\gamma\lr{\alpha x+\frac{\alpha}{\alpha+\lambda}y}-\gamma(\alpha x+y)}=0.
\end{equation*}
It follows that 
\begin{equation*}
\lim_{\alpha\to+\infty}\lr{\gamma((\alpha+\lambda)x+y))-\gamma(\alpha x+y)}=\lim_{\alpha\to+\infty}\lambda\gamma\lr{x+\frac{1}{\alpha+\lambda}y}=\lambda\gamma(x).
\end{equation*}
Using this equation, we have
\begin{align}
&\norel\lim_{\alpha\to+\infty}(\gamma((\alpha x+y)+x)-\gamma((\alpha x+y)-x))\notag\\
&=\lim_{\alpha\to+\infty}(\gamma((\alpha+1)x+y)-\gamma((\alpha-1)x+y))\notag\\
&=\lim_{\alpha\to+\infty}(\gamma((\alpha+2)x+y)-\gamma(\alpha x+y))\notag\\
&=2\gamma(x)>0\label{eq:right-limit}
\end{align}
and
\begin{align}
&\norel\lim_{\alpha\to-\infty}(\gamma((\alpha x+y)+x)-\gamma((\alpha x+y)-x))\notag\\
&=\lim_{\alpha\to+\infty}(\gamma((-\alpha+1)x+y)-\gamma((-\alpha-1)x+y))\notag\\
&=\lim_{\alpha\to+\infty}(\gamma((\alpha-2)(-x)+y)-\gamma(\alpha (-x)+y))\notag\\
&=-2\gamma(-x)<0.\label{eq:left-limit}
\end{align}
Using the intermediate value theorem, the continuity of $f$ yields the existence of a zero of $f$. Moreover, \eqref{eq:right-limit} and \eqref{eq:left-limit} imply that the set of zeros of $f$ is bounded. 

Now fix $y\in\RR^d$ and $\alpha>0$. We show that $\gamma(y+x)-\gamma(y-x)\leq\gamma(\alpha x+y+x)-\gamma(\alpha x+y-x)$. If $y$ is a multiple of $x$, the claim is easily seen. Else the points $y$, $-x$, $x$, and $\alpha x+y$ are (in this cyclic order) the vertices of a convex quadrangle. Now \cite[Lemma~4.4]{JahnMaRi2016} gives
\begin{equation*}
\gamma(y+x)+\gamma(\alpha x+y-x)\leq\gamma(y-x)+\gamma(\alpha x+y-x)
\end{equation*}
or, equivalently,
\begin{equation*}
\gamma(y+x)-\gamma(y-x)\leq\gamma(\alpha x+y+x)-\gamma(\alpha x+y-x).
\end{equation*}
Hence $f$ is increasing, and its sublevel sets are intervals. This yields the convexity part of the claim.
\end{proof}
In particular, \hyperref[result:bisector-directional-convexity]{Lemma~\ref*{result:bisector-directional-convexity}} shows that the intersection of $\bisec(-x,x)$ and every translate of $\strline{-x}{x}$ is either a singleton or a line segment. Next we will identify subsets of $\bisec(-x,x)$ which are unions of line segments parallel to $\strline{-x}{x}$. For $x,x^\ast\in\RR^d$ with $\gamma^\circ(x^\ast)$, we define the set
\begin{align*}
C(x,x^\ast)\defeq x+\setcond{y\in\RR^d}{\skpr{x^\ast}{y}=\gamma(y)},
\end{align*}
which is the translation by $x$ of the union of rays from the origin through the exposed face $\setcond{y\in\RR^d}{\skpr{x^\ast}{y}=1}\cap\mc{0}{1}$ of the unit ball $\mc{0}{1}$.
\begin{Prop}\label{prop:bisector-cone}
Given a non-zero vector $x$ in a generalized Minkowski space $(\RR^d,\gamma)$, we have $C(-x,x^\ast)\cap C(x,x^\ast)\subset\bisec(-x,x)$ whenever $x^\ast\in\RR^d$ and $\skpr{x^\ast}{x}=0$.
\end{Prop}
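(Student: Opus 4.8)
The plan is to unwind all the definitions and observe that the hypothesis $\skpr{x^\ast}{x}=0$ is exactly what makes the linear functional $\skpr{x^\ast}{\cdot}$ insensitive to adding or subtracting $x$. Fix an arbitrary $z\in C(-x,x^\ast)\cap C(x,x^\ast)$. By definition of $C(x,x^\ast)$, membership $z\in C(x,x^\ast)$ means that $z-x\in\setcond{y\in\RR^d}{\skpr{x^\ast}{y}=\gamma(y)}$, that is, $\skpr{x^\ast}{z-x}=\gamma(z-x)$; likewise $z\in C(-x,x^\ast)$ yields $\skpr{x^\ast}{z+x}=\gamma(z+x)$.

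Next I would invoke $\skpr{x^\ast}{x}=0$ to compute $\skpr{x^\ast}{z+x}=\skpr{x^\ast}{z}=\skpr{x^\ast}{z-x}$. Chaining the three equalities then gives
\begin{equation*}
\gamma(z+x)=\skpr{x^\ast}{z+x}=\skpr{x^\ast}{z-x}=\gamma(z-x),
\end{equation*}
which is precisely $z\perp_I x$, i.e.\ $z\in\bisec(-x,x)$. Since $z$ was arbitrary, the claimed inclusion $C(-x,x^\ast)\cap C(x,x^\ast)\subset\bisec(-x,x)$ follows.

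There is essentially no obstacle beyond keeping the signs straight: the content lies entirely in the remark that the "exposed-face cone'' $\setcond{y}{\skpr{x^\ast}{y}=\gamma(y)}$ translated to the base point $-x$ (respectively $x$) forces the functional $x^\ast$ to reproduce $\gamma$ exactly on $z+x$ (respectively $z-x$), so that the Cauchy--Schwarz slack \eqref{eq:cauchy-schwarz} vanishes on both sides simultaneously. One should of course recall the standing assumption $\gamma^\circ(x^\ast)=1$ under which $C(\cdot,x^\ast)$ was introduced, although it plays no active role in the computation. Finally, it is worth noting for the record---this being the point of the construction---that $C(-x,x^\ast)\cap C(x,x^\ast)$ is by its very definition a union of (possibly degenerate) segments parallel to $\strline{-x}{x}$, so the proposition genuinely exhibits "directionally convex'' pieces of the bisector $\bisec(-x,x)$ in the sense made precise by \hyperref[result:bisector-directional-convexity]{Lemma~\ref*{result:bisector-directional-convexity}}.
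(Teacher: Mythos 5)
Your proof is correct and is essentially identical to the paper's: unwind the definition of $C(\pm x,x^\ast)$ to get $\skpr{x^\ast}{z\pm x}=\gamma(z\pm x)$, use $\skpr{x^\ast}{x}=0$ to equate the two functional values, and conclude $\gamma(z+x)=\gamma(z-x)$. Nothing further is needed.
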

\begin{proof}
Consider
\begin{align*}
&z\in C(-x,x^\ast)\cap C(x,x^\ast)\\
&\Longleftrightarrow\begin{cases}\skpr{x^\ast}{z+x}=\gamma(z+x),\\\skpr{x^\ast}{z-x}=\gamma(z-x)\end{cases}\\
&\Longrightarrow\gamma(z+x)=\skpr{x^\ast}{z+x}+\skpr{x^\ast}{-2x}=\skpr{x^\ast}{z-x}=\gamma(z-x)\\
&\Longrightarrow z\in\bisec(-x,x).
\end{align*}
\end{proof}
The cases in which the straight line $y+\strline{-x}{x}$ intersects the bisector $\bisec(-x,x)$ in at most one point are specified in \hyperref[result:unique-bisector-point]{Theorem~\ref*{result:unique-bisector-point}} below. The following corollary of the triangle inequality serves as an auxiliary result, see \cite[Lemma~5]{MartiniSwWe2001} for the special case of normed spaces.
\begin{Lem}\label{result:unimodality}
Let $y,z\in (\RR^d,\gamma)$, $\lambda\in (0,1)$, and $w=\lambda y+(1-\lambda) z$. Then, for $x\in\RR^d$, 
\begin{equation*}
\gamma(w-x)\leq\max\setn{\gamma(y-x),\gamma(z-x)}
\end{equation*}
with equality if and only if $\gamma(w-x)=\gamma(y-x)=\gamma(z-x)$. In the case of equality, $\gamma(w-x)=\min\setcond{\gamma(v-x)}{v\in\strline{y}{z}}$ and $\gamma(w-x)=\gamma(v-x)$ for all $v\in[y,z]$.
\end{Lem}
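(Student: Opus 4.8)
The plan is to derive everything from just two properties of $\gamma$: subadditivity and positive homogeneity. No convexity machinery from Section~2 is needed.

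First I would prove the inequality itself. Since $w-x=\lambda(y-x)+(1-\lambda)(z-x)$ with $\lambda,1-\lambda>0$, subadditivity together with $\gamma(\lambda v)=\lambda\gamma(v)$ for $\lambda>0$ gives
\[\gamma(w-x)\leq\lambda\gamma(y-x)+(1-\lambda)\gamma(z-x)\leq\max\setn{\gamma(y-x),\gamma(z-x)}.\]
For the characterization of equality I would assume without loss of generality $\gamma(y-x)\leq\gamma(z-x)$, so that the maximum equals $\gamma(z-x)$, and observe that if $\gamma(w-x)=\gamma(z-x)$ then the whole chain above collapses to equalities; reading off the first of these and using $\lambda>0$ forces $\gamma(y-x)=\gamma(z-x)$, after which $\gamma(w-x)=\lambda\gamma(y-x)+(1-\lambda)\gamma(z-x)$ equals both. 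The reverse implication is immediate, which yields the stated equivalence.

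The remaining two assertions concern the equality case, so set $r\defeq\gamma(y-x)=\gamma(z-x)=\gamma(w-x)$ (the case $y=z$ being trivial). I would parametrize the line $\strline{y}{z}$ by $t\mapsto(1-t)y+tz$, so that $y$, $z$, and $w$ correspond to $t=0$, $t=1$, and $t=1-\lambda\in(0,1)$ respectively. For a point $v$ of the segment $[y,z]$, i.e.\ with parameter in $[0,1]$, the point $w$ is a convex combination of $v$ (with strictly positive weight, since $v\neq w$ or else there is nothing to prove) and one of the endpoints $y$ or $z$; applying the inequality above to that combination and substituting $\gamma(y-x)=\gamma(z-x)=r$ produces, after a one-line rearrangement, $\gamma(v-x)\geq r$, which with the first part of the lemma gives $\gamma(v-x)=r$. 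For a point $v$ whose parameter lies outside $[0,1]$, the roles swap: now one of $y$, $z$ is a convex combination of $v$ (again with strictly positive weight) and the other endpoint, and the identical rearrangement gives $\gamma(v-x)\geq r$. Hence $r$ is the minimum of $\gamma(\cdot-x)$ over $\strline{y}{z}$, attained in particular at $w$, which are precisely the last two claims.

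I do not anticipate a genuine obstacle here. The only point requiring care is keeping track of which of three collinear points lies between the other two, and the parametrization above makes this bookkeeping mechanical; the place where the hypotheses $\lambda\in(0,1)$ and $v\neq w$ really enter is exactly the ``strictly positive weight'' detail that lets one divide through and keep the inequality.
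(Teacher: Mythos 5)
Your proof is correct and follows essentially the same route as the paper: the two-step chain $\gamma(w-x)\leq\lambda\gamma(y-x)+(1-\lambda)\gamma(z-x)\leq\max\setn{\gamma(y-x),\gamma(z-x)}$, reading off the equality conditions, and then reapplying the chain to suitable collinear triples to handle points on and off the segment. The only (harmless) differences are cosmetic: you treat both unbounded directions of the line explicitly where the paper writes out only one and leaves the other to symmetry, and you derive $\gamma(v-x)=r$ on $[y,z]$ by direct rearrangement rather than by the observation that three collinear points on the sphere $\ms{x}{r}$ force the whole chord onto it.
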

\begin{proof}
We have
\begin{align}
\gamma(w-x)&=\gamma(\lambda y+(1-\lambda)z)-x)\notag\\
&=\gamma(\lambda(y-x)+(1-\lambda)(z-x))\notag\\
&\leq\lambda\gamma(y-x)+(1-\lambda)\gamma(z-x)\label{eq:unimodality_1}\\
&\leq\max\setn{\gamma(y-x),\gamma(z-x)}.\label{eq:unimodality_2}
\end{align}
If \eqref{eq:unimodality_2} holds with equality, then $\gamma(y-x)=\gamma(z-x)$, and if \eqref{eq:unimodality_1} holds with equality as well, then these numbers are equal to $\gamma(w-x)$. In other words, $y$, $w$, and $z$ are three collinear points on $\ms{x}{\gamma(w-x)}$. Hence $[y,z]\subset\ms{x}{\gamma(w-x)}$ or, equivalently, $\gamma(w-x)=\gamma(v-x)$ for all $v\in[y,z]$. Let $p\in\strline{y}{z}$ be such that $z=\mu y+(1-\mu)p$ for some $\mu\in (0,1)$. Applying the chain of inequalities above to $y$, $z$, and $p$, we obtain 
\begin{equation}
\gamma(z-x)\leq\max\setn{\gamma(y-x),\gamma(p-x)}.\label{eq:unimodality_3}
\end{equation}
Suppose $\gamma(p-x)<\gamma(y-x)$. Then \eqref{eq:unimodality_3} holds with equality, i.e., $\gamma(p-x)=\gamma(y-x)$. This is a contradiction. Thus $\gamma(p-x)\geq\gamma(y-x)$, which shows that $\gamma(w-x)=\min\setcond{\gamma(v-x)}{v\in\strline{y}{z}}$.
\end{proof}
\begin{Bem}
Applying the above \hyperref[result:unimodality]{Lemma~\ref*{result:unimodality}} to the generalized Minkowski space $(\RR^d,\gamma^\vee)$ where $\gamma^\vee(x)\defeq\gamma(-x)$, we get the same statements with reversed arguments, e.g., $\gamma(x-w)\leq\max\setn{\gamma(x-y),\gamma(x-z)}$ for all $x,y,z\in\RR^d$ and $w\in [y,z]$.
\end{Bem}
Given a non-zero vector $x$ in a generalized Minkowski space $(\RR^d,\gamma)$ with unit ball $B=\mc{0}{1}$, we determine now for which vectors $y\in\RR^d$ the intersection of the straight line $y+\strline{-x}{x}$ and the bisector $\bisec(-x,x)$ is a singleton and for which $y$ this intersection contains at least two points.
If $y$ and $x$ are linearly dependent, this intersection consists solely of the metric midpoint of $-x$ and $x$. Else the result depends on the shape of the part of the unit sphere $\ms{0}{1}$ lying in the two-dimensional \emph{half-flat} $\strline{-x}{x}+\clray{0}{y}$.
In particular, the ratio of the length of the segments $[-x,x]$ and the length of the maximal segment contained in $\ms{0}{1}\cap(\strline{-x}{x}+\clray{0}{y})$ being parallel to $\strline{-x}{x}$ is important for the formulation of \hyperref[result:unique-bisector-point]{Theorem~\ref*{result:unique-bisector-point}}.
Since we are taking ratios of lengths of parallel line segments, the result will not depend on whether we choose $\gamma(z_1-z_2)$, $\gamma(z_2-z_1)$, or $\gamma_{B-B}(z_1-z_2)$ to be the (possibly oriented) length of the segment $[z_1,z_2]$ as long as we do it consistently.
To show this, let $w_1,w_2,z_1,z_2\in\RR^d$ be such that $w_1\neq w_2$, $z_1\neq z_2$. Assume that there exist a number $\lambda>0$ such that $z_1-z_2=\lambda(w_1-w_2)$ or, equivalently, $z_2-z_1=\lambda(w_2-w_1)$. It follows that
\begin{equation*}
\frac{\gamma_{B-B}(z_1-z_2)}{\gamma_{B-B}(w_1-w_2)}=\frac{\gamma_B(z_1-z_2)}{\gamma_B(w_1-w_2)}=\frac{\gamma_B(z_2-z_1)}{\gamma_B(w_2-w_1)}=\lambda.
\end{equation*}
Due to this, set $\hfl(x,y)\defeq\strline{-x}{x}+\clray{0}{y}$ and
\begin{equation*}
M_y(x)\defeq\sup\setcond{\gamma_B(t-s)}{\begin{matrix}[s,t]\subset\ms{0}{1}\cap\hfl(x,y),\\\ex\lambda>0:t-s=\lambda x\end{matrix}}
\end{equation*}
for $x,y\in\RR^d$, $x\neq 0$. Since the number $M_y(x)$ only depends on $\hfl(x,y)$, the following generalization of \cite[Theorem~2.6]{JiLiWu2011} is essentially a two-dimensional description of the bisector $\bisec(-x,x)$.

\begin{Satz}\label{result:unique-bisector-point}
Let $x$ and $y$ be non-zero vectors of a generalized Minkowski space $(\RR^d,\gamma)$. If $M_y(x)\leq\frac{2\gamma(x)}{\gamma(y)}$, then there exists a unique real number $\alpha$ such that $(y+\alpha x)\perp_I x$.
\end{Satz}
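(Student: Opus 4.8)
The plan is to deduce existence straight from \hyperref[result:bisector-directional-convexity]{Lemma~\ref*{result:bisector-directional-convexity}} and to reduce uniqueness to a one-variable convexity argument together with the definition of $M_y(x)$. By \hyperref[result:bisector-directional-convexity]{Lemma~\ref*{result:bisector-directional-convexity}}, the set $\setcond{\alpha\in\RR}{(\alpha x+y)\perp_I x}$ is a non-empty compact interval $[\alpha_1,\alpha_2]$, so existence is already settled and the task is to show $\alpha_1=\alpha_2$. The key reformulation is that, writing $\varphi:\RR\to\RR$, $\varphi(\beta)\defeq\gamma(\beta x+y)$ (a convex function with $\varphi(\beta)/\abs{\beta}\to\gamma(x)>0$ as $\beta\to+\infty$ and $\to\gamma(-x)>0$ as $\beta\to-\infty$, hence coercive), the relation $(\alpha x+y)\perp_I x$ reads $\gamma((\alpha+1)x+y)=\gamma((\alpha-1)x+y)$, i.e.\ $\varphi(\alpha+1)=\varphi(\alpha-1)$. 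I argue by contradiction and assume $\alpha_1<\alpha_2$; then $\varphi(\alpha+1)=\varphi(\alpha-1)$ for \emph{all} $\alpha\in[\alpha_1,\alpha_2]$.

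The core step is to upgrade this local relation to the assertion that $\varphi$ is \emph{constant} on $[\alpha_1-1,\alpha_2+1]$. For $\alpha,\alpha'\in[\alpha_1,\alpha_2]$ with $0<\alpha'-\alpha<2$ the four reals $\alpha-1<\alpha'-1<\alpha+1<\alpha'+1$ satisfy $\varphi(\alpha-1)=\varphi(\alpha+1)$ and $\varphi(\alpha'-1)=\varphi(\alpha'+1)$; convexity evaluated at $\alpha'-1$ forces $\varphi(\alpha'-1)\le\varphi(\alpha-1)$ and evaluated at $\alpha+1$ forces $\varphi(\alpha+1)\le\varphi(\alpha'-1)$, so these two common values coincide, and a convex function taking one and the same value at the three points $\alpha-1<\alpha'-1<\alpha'+1$ is constant on $[\alpha-1,\alpha'+1]$. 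Applying this to consecutive points of a partition $\alpha_1=\beta_0<\dots<\beta_n=\alpha_2$ with all gaps $<2$, the intervals $[\beta_{i-1}-1,\beta_i+1]$ overlap pairwise in the non-degenerate interval $[\beta_i-1,\beta_i+1]$, so $\varphi$ is constant on their union $[\alpha_1-1,\alpha_2+1]$; call the constant value $m$. Since $\varphi$ is convex and coercive, a value attained on a non-degenerate interval is its global minimum, whence $m=\min_\beta\varphi(\beta)\le\varphi(0)=\gamma(y)$.

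Now I translate back to the unit sphere. The segment $[s,t]$ with $s=(\alpha_1-1)x+y$ and $t=(\alpha_2+1)x+y$ lies in $\ms{0}{m}$ because $\varphi\equiv m$ on $[\alpha_1-1,\alpha_2+1]$, it lies in $\hfl(x,y)=\strline{-x}{x}+\clray{0}{y}$ since each of its points is $\beta x+1\cdot y$ with $\beta\in\RR$, and $t-s=(\alpha_2-\alpha_1+2)\,x$. Rescaling by $1/m>0$ — using that $\hfl(x,y)$ is a cone and that $\ms{0}{m}$ is carried onto $\ms{0}{1}$ — yields a segment contained in $\ms{0}{1}\cap\hfl(x,y)$ whose difference vector is the positive multiple $\tfrac{\alpha_2-\alpha_1+2}{m}\,x$ of $x$; hence, by the definition of $M_y(x)$,
\begin{equation*}
M_y(x)\ge\frac{(\alpha_2-\alpha_1+2)\,\gamma(x)}{m}\ge\frac{(\alpha_2-\alpha_1+2)\,\gamma(x)}{\gamma(y)}>\frac{2\gamma(x)}{\gamma(y)},
\end{equation*}
the last inequality holding because $\alpha_2-\alpha_1>0$. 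This contradicts the hypothesis $M_y(x)\le\tfrac{2\gamma(x)}{\gamma(y)}$, so $\alpha_1=\alpha_2$.

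I expect the constancy step to be the main obstacle, in particular the case $\alpha_2-\alpha_1\ge 2$, where $[\alpha_1-1,\alpha_1+1]$ and $[\alpha_2-1,\alpha_2+1]$ are disjoint and the chaining over a partition is genuinely needed; an alternative is to observe that $\int_\beta^{\beta+2}\varphi'_+=0$ holds on a non-degenerate $\beta$-interval and that monotonicity of the right derivative $\varphi'_+$ then forces $\varphi'_+\equiv 0$ on $[\alpha_1-1,\alpha_2+1]$. One should also dispose of the degenerate case $y\in\strline{-x}{x}$ separately: there $\varphi$ is strictly \enquote{V-shaped}, hence not constant on any interval, so the above contradiction argument already gives $\alpha_1=\alpha_2$ regardless of $M_y(x)$.
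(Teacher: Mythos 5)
Your proof is correct and follows essentially the same route as the paper's: existence from \hyperref[result:bisector-directional-convexity]{Lemma~\ref*{result:bisector-directional-convexity}}, then a contradiction in which the convex function $\lambda\mapsto\gamma(y+\lambda x)$ is shown to be constant (with value $m=\min\leq\gamma(y)$) on $[\alpha_1-1,\alpha_2+1]$, producing a segment parallel to $x$ of $\gamma_B$-length $(\alpha_2-\alpha_1+2)\gamma(x)/m>2\gamma(x)/\gamma(y)$ in $\ms{0}{1}\cap\hfl(x,y)$ and thus violating the bound on $M_y(x)$. The only divergence is in the constancy step, where the paper deduces it directly from the two endpoint identities $f(\alpha_i+1)=f(\alpha_i-1)$, $i\in\setn{1,2}$ (which already suffices even when $\alpha_2-\alpha_1\geq 2$), while you chain over a partition of $[\alpha_1,\alpha_2]$; your explicit rescaling to $\ms{0}{1}$ and the separate treatment of the degenerate case $y\in\strline{-x}{x}$ are careful additions that the paper handles implicitly or in the discussion preceding the theorem.
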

\begin{proof}
The existence of at least one number $\alpha\in\RR$ with $y+\alpha x\in\bisec(-x,x)$ follows from \hyperref[result:bisector-directional-convexity]{Lemma~\ref*{result:bisector-directional-convexity}}. Suppose that there are two numbers $\alpha_1$, $\alpha_2\in\RR$ such that $\alpha_1<\alpha_2$ and $\setn{y+\alpha_1x,y+\alpha_2x}\subset\bisec(-x,x)$. Let $f:\RR\to\RR$, $f(\lambda)=\gamma(y+\lambda x)$. We have
\begin{align}
f(\alpha_1+1)&=\gamma(y+\alpha_1x+x)=\gamma(y+\alpha_1x-x)=f(\alpha_1+1)\label{eq:left-bisector-point}\\
f(\alpha_2+1)&=\gamma(y+\alpha_2x+x)=\gamma(y+\alpha_2x-x)=f(\alpha_2+1)\label{eq:right-bisector-point}
\end{align}
Since $f$ is a convex function, equations \eqref{eq:left-bisector-point} and \eqref{eq:right-bisector-point} imply that $f$ is constant on $[\alpha_1-1,\alpha_2+1]$. By \hyperref[result:unimodality]{Lemma~\ref*{result:unimodality}}, this constant equals $\eta\defeq\min\setcond{f(\lambda)}{\lambda\in\RR}$. Therefore, the line segment $[y+(\alpha_1-1)x,y+(\alpha_2+1)x]$ is contained in $\ms{0}{\eta}$, and we have
\begin{align*}
M_x(y)\geq\frac{1}{\eta}(\alpha_2-\alpha_1+2)\gamma(x)\geq\frac{1}{\gamma(y)}(\alpha_2-\alpha_1+2)\gamma(x)>2\frac{\gamma(x)}{\gamma(y)}.
\end{align*}
\end{proof}
\subsection{Characterizations of norms}
An intriguing and, surprisingly, characteristic property of bisectors in Euclidean spaces is their hyperplanarity, see \cite[Proposition~4.10]{JahnMaRi2016}. Closely related, isosceles orthogonality is an homogeneous or additive exactly in Euclidean spaces.
\begin{Satz}
Let $(\RR^d,\gamma)$ be a generalized Minkowski space. The following statements are equivalent:
\begin{enumerate}[label={(\alph*)},leftmargin=*,align=left,noitemsep]
\item{The gauge $\gamma$ is a norm induced by an inner product.\label{isosceles-inner-product}}
\item{Isosceles orthogonality is right homogeneous.\label{isosceles-right-homogeneity}}
\item{Isosceles orthogonality is right additive.\label{isosceles-right-additivity}}
\item{Isosceles orthogonality is left homogeneous.\label{isosceles-left-homogeneity}}
\item{Isosceles orthogonality is left additive.\label{isosceles-left-additivity}}
\end{enumerate}
\end{Satz}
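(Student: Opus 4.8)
The plan is to prove (a)$\Rightarrow$(b)--(e) directly, and each of (b)--(e)$\Rightarrow$(a) by first deducing that $\gamma$ must be symmetric---hence a norm---and then quoting the classical characterization of inner product spaces among normed spaces. The implication (a)$\Rightarrow$(b)--(e) is immediate: if $\gamma$ is the norm of an inner product $\skpr{\cdot}{\cdot}_\gamma$, then $\gamma(y+x)^2-\gamma(y-x)^2=4\skpr{y}{x}_\gamma$, so $y\perp_I x$ is equivalent to $\skpr{y}{x}_\gamma=0$; being bilinear and symmetric, this relation is homogeneous and additive in both arguments.

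For the converse, the essential point is that each of (b)--(e) forces $\gamma(v)=\gamma(-v)$ for all $v\in\RR^d$ (the case $v=0$ being trivial). Fix $v\neq 0$ and put $S_v\defeq\setcond{\alpha\in\RR}{(\alpha v)\perp_I v}$. By \hyperref[result:bisector-directional-convexity]{Lemma~\ref*{result:bisector-directional-convexity}}, applied with the non-zero vector $v$ and with $0$ in place of $y$, the set $S_v$ is a non-empty compact interval. I claim $0\in S_v$ in each of the four cases. If $\perp_I$ is left homogeneous, then $\alpha\in S_v$ and $\mu>0$ give $\mu\alpha\in S_v$, so $S_v$ is a bounded cone, whence $S_v=\setn{0}$. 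If $\perp_I$ is right homogeneous, then $\alpha v\perp_I v$ gives $\alpha v\perp_I\lambda v$ for all $\lambda>0$; dividing $\gamma(\alpha v+\lambda v)=\gamma(\alpha v-\lambda v)$ by $\lambda$ (positive homogeneity of $\gamma$) shows $(\alpha/\lambda)v\perp_I v$, so again $S_v$ is a bounded cone and $S_v=\setn{0}$. If $\perp_I$ is left additive, then $S_v$ is closed under addition and bounded, hence $S_v=\setn{0}$. Finally, if $\perp_I$ is right additive, then iterating $\alpha v\perp_I v$ yields $\alpha v\perp_I nv$ for all $n\in\NN$, and dividing $\gamma(\alpha v+nv)=\gamma(\alpha v-nv)$ by $n$ shows $(\alpha/n)v\perp_I v$; thus the compact interval $S_v$ is closed under $\alpha\mapsto\alpha/n$, which forces $\inf S_v\leq 0\leq\sup S_v$ and hence $0\in S_v$. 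In every case $0\in S_v$ means $0\perp_I v$, i.e. $\gamma(v)=\gamma(-v)$, so $\gamma$ is a norm.

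It then remains to finish in the normed setting. For a norm one has $\gamma(x-y)=\gamma(y-x)$, so $\perp_I$ is symmetric; in particular its right and left homogeneity coincide, and so do its right and left additivity. The classical result that a real normed space in which isosceles orthogonality is homogeneous---equivalently, additive---is an inner product space (see \cite{James1945}) now yields (a). I expect the reduction to the normed case to be the only nontrivial step: its key ingredient is \hyperref[result:bisector-directional-convexity]{Lemma~\ref*{result:bisector-directional-convexity}}, which supplies exactly the non-emptiness, boundedness, and interval structure of $S_v$ that the cone and semigroup arguments above exploit; after that, the conclusion is standard.
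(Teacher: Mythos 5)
Your proof is correct and follows essentially the same route as the paper: each of (b)--(e) is shown to force $\gamma(v)=\gamma(-v)$ for all $v$, so that $\gamma$ is a norm, and then the classical theorems of James \cite{James1945} for normed spaces are invoked. The only difference is cosmetic --- you treat all four cases uniformly via the compact interval $S_v$ supplied by Lemma~\ref{result:bisector-directional-convexity}, whereas the paper uses slightly more ad hoc limiting and uniqueness arguments on the line $\lin\setn{x}$; both rest on the same observation.
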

\begin{proof}
It is sufficient to show that right additivity, left additivity, right homogeneity, and left homogeneity imply that $\gamma$ is a norm. The claim then follows from \cite[Theorem~4.7, Theorem~4.8]{James1945}.

\ref{isosceles-right-homogeneity}: If  $y\perp_I x$, then $y\perp_I\lambda x$ for all $\lambda>0$ or, equivalently, $\lambda^{-1}y\in\bisec(-x,x)$ for all $\lambda>0$. Taking the limit $\lambda\to+\infty$, we obtain $0\in\bisec(-x,x)$. (Note that the bisector is a closed set by continuity of $\gamma$.) Since $x$ was chosen arbitrarily, $\gamma$ is a norm.

\ref{isosceles-left-homogeneity}: Given $x\in\RR^d\setminus\setn{0}$, there exists exactly one number $\alpha$ for which $\alpha x\perp_I x$. If $\gamma$ is not a norm, then $x$ can be chosen such that $\alpha\neq 0$. By left additivity, $\lambda\alpha x\perp_I x$ for all $\lambda>0$ which is impossible for $\abs{\lambda\alpha}>1$.

\ref{isosceles-right-additivity}, \ref{isosceles-left-additivity}: Like before, but with $\lambda\in\NN$ instead of $\lambda>0$.
\end{proof}

Within the class of normed spaces, Birkhoff orthogonality implies isosceles orthogonality if and only if the space is a Hilbert space, see \cite[Theorem~2]{Ohira1952}, \cite[(10.2)]{Amir1986}. A complementary statement is valid for normed spaces: Isosceles orthogonality implies Birkhoff orthogonality if and only if the space is a Hilbert space, see \cite[Theorem~1]{Ohira1952} and \cite[(10.9)]{Amir1986}. The same is true for generalized Minkowski spaces.
\begin{Satz}\
Let $(\RR^d,\gamma)$ be a generalized Minkowski space. 
\begin{enumerate}[label={(\alph*)},leftmargin=*,align=left,noitemsep]
\item{If Birkhoff orthogonality implies isosceles orthogonality, then $\gamma$ is a norm.\label{birkhoff-implies-isosceles}}
\item{If isosceles orthogonality implies Birkhoff orthogonality, then $\gamma$ is a norm.\label{isosceles-implies-birkhoff}}
\end{enumerate}
\end{Satz}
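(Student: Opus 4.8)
The plan is to handle the two parts separately; in both, everything reduces to proving $\gamma(v)=\gamma(-v)$ for all $v\in\RR^d$, since a gauge with this symmetry is automatically a norm (positive homogeneity then upgrades to absolute homogeneity).

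Part \ref{birkhoff-implies-isosceles} is essentially a one-liner. First I would note that $0\perp_B y$ for every $y\in\RR^d$: by \hyperref[definition:orthogonality]{Definition~\ref*{definition:orthogonality}} this amounts to $\gamma(0)=0\leq\gamma(\lambda y)$ for all $\lambda\in\RR$, which holds because $\gamma\geq 0$. Applying the hypothesis that Birkhoff orthogonality implies isosceles orthogonality then gives $0\perp_I y$, that is, $\gamma(y)=\gamma(0+y)=\gamma(0-y)=\gamma(-y)$, for every $y$. Hence $\gamma$ is a norm. The only subtlety is that, as in the proof of \hyperref[result:isosceles-reversion]{Theorem~\ref*{result:isosceles-reversion}}, one reads the orthogonality relations so that the zero vector is an admissible argument.

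For part \ref{isosceles-implies-birkhoff} I would argue by contraposition: assuming $\gamma$ is not a norm, I pick $x\neq 0$ with $\gamma(x)\neq\gamma(-x)$ and exhibit a nonzero multiple of $x$ lying in $\bisec(-x,x)$. The natural candidate is $\alpha_0 x$ with
\[
\alpha_0\defeq\frac{\gamma(-x)-\gamma(x)}{\gamma(-x)+\gamma(x)}\in(-1,1)\setminus\setn{0}.
\]
Since $\alpha_0+1>0>\alpha_0-1$, positive homogeneity gives $\gamma((\alpha_0+1)x)=(\alpha_0+1)\gamma(x)$ and $\gamma((\alpha_0-1)x)=(1-\alpha_0)\gamma(-x)$, and a short computation shows that both equal $2\gamma(x)\gamma(-x)/(\gamma(x)+\gamma(-x))$; hence $\alpha_0 x\perp_I x$. (Existence of such an $\alpha_0$ could instead be quoted from \hyperref[result:bisector-directional-convexity]{Lemma~\ref*{result:bisector-directional-convexity}} applied with $y=0$, together with the observation that $0\in\bisec(-x,x)$ forces $\gamma(x)=\gamma(-x)$.) Now the hypothesis forces $\alpha_0 x\perp_B x$, that is, $\gamma(\alpha_0 x)\leq\gamma(\alpha_0 x+\lambda x)=\gamma((\alpha_0+\lambda)x)$ for all $\lambda\in\RR$; letting $\alpha_0+\lambda=t\downarrow 0$ gives $\gamma(\alpha_0 x)\leq t\gamma(x)\to 0$, so $\gamma(\alpha_0 x)=0$ and therefore $\alpha_0=0$, a contradiction. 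Hence $\gamma(x)=\gamma(-x)$ for all $x$, so $\gamma$ is a norm.

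The main obstacle — indeed the only place requiring an idea — is locating the right test vector in part \ref{isosceles-implies-birkhoff}: one has to see that asymmetry of $\gamma$ pushes the intersection of $\bisec(-x,x)$ with $\lin\setn{x}$ off the origin, and that $\alpha_0 x\perp_B x$ would then declare a \emph{nonzero} point of the line $\RR x$ to be a point of $\RR x$ nearest to the origin, which is impossible because $\gamma$ has infimum $0$ along $\RR x$. Part \ref{birkhoff-implies-isosceles} requires no such work once one recalls that the origin is Birkhoff orthogonal to every vector.
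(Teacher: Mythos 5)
Your proof is correct and follows essentially the same route as the paper: part (a) via $0\perp_B y$ for all $y$, and part (b) by exhibiting the nonzero point $\frac{\gamma(-x)-\gamma(x)}{\gamma(-x)+\gamma(x)}x$ of $\bisec(-x,x)\cap\lin\setn{x}$ and observing that it cannot be Birkhoff orthogonal to $x$. You merely spell out the verification of the isosceles relation and the final contradiction in slightly more detail than the paper does.
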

\begin{proof}
\ref{birkhoff-implies-isosceles}: We have $0\perp_B y$ for all $y\in\RR^d$, thus $\gamma(y)=\gamma(-y)$ for all $y\in\RR^d$.

\ref{isosceles-implies-birkhoff}: Assume that $\gamma$ is not a norm. Then there exists $y\in\RR^d$ such that $\gamma(y)\neq\gamma(-y)$. Furthermore, there is a unique point $x\in\strline{-y}{y}$ such that $x\perp_I y$, namely $x=\frac{\gamma(-y)-\gamma(y)}{\gamma(-y)+\gamma(y)}y\neq 0$. Due to the hypothesis, we have $x\perp_B\frac{\gamma(-y)+\gamma(y)}{\gamma(-y)-\gamma(y)}x$, which is impossible.
\end{proof}

\section{Final remarks}
For extending the concept of orthogonality from Euclidean space to arbitrary normed spaces, there are various alternatives each of which has its own benefits (see \cite{AlonsoBe1988b,AlonsoBe1989} for an overview of orthogonality types in normed spaces). By replacing the norm by a gauge, we translated two of the notions from normed spaces to generalized Minkowski spaces.
Apart from this extension of the geometric setting, the relaxation of the orthogonality relation itself has been approached not only in the way presented here, but also differently.
Dragomir \cite{Dragomir1991} introduced an approximate Birkhoff orthogonality relation $x\perp y$ via $\mnorm{x}\leq\mnorm{x+\lambda y}+\eps\mnorm{x}$ for all $\lambda\in\RR$. This condition is a left-homogeneous version of the one acting as the model for \hyperref[definition:orthogonality]{Definition~\ref*{definition:orthogonality}}.
Chmieli{\'{n}}ski \cite{Chmielinski2005b} discussed two approximate orthogonality relations in normed spaces defined via $\mnorm{x+\lambda y}^2\geq\mnorm{x}^2-2\eps\mnorm{x}\mnorm{\lambda y}$ and $\mnorm{x+\lambda y}\geq\sqrt{1-\eps^2}\mnorm{x}$ (for all $\lambda\in\RR$, in each case), the latter one being a reparametrization of Dragomir's condition.
Both relations are left-homogeneous and right-homogeneous as well.
In inner-product spaces, the condition $\mnorm{x+\lambda y}\geq\sqrt{1-\eps^2}\mnorm{x}$ for all $\lambda\in\RR$ is equivalent to $\abs{\skpr{y}{x}}\leq\eps\mnorm{x}\mnorm{y}$.
Due to the close relationship between orthogonality and the Cauchy--Schwarz inequality (see \hyperref[result:birkhoff-orthogonality-cauchy-schwarz-inequality-zero]{Lemma~\ref*{result:birkhoff-orthogonality-cauchy-schwarz-inequality-zero}}), the relaxed inequality $\abs{\skpr{y}{x}}\leq\eps\mnorm{x}\mnorm{y}$ itself might serve as a definition of approximate orthogonality.
Here, $0$-orthogonality is the Euclidean orthogonality (independently of the chosen norm), and $1$-orthogonality holds trivially because of the Cauchy--Schwarz inequality. (So the interesting cases which reflect the geometry of the normed space will satisfy $0<\eps<1$.)
However, since $\abs{\skpr{y}{x}}\leq\gamma(x)\gamma(y)$ is wrong in general, relaxing the Cauchy--Schwarz inequality in a multiplicative way has to be done differently in generalized Minkowski spaces.
Apart from superior and inferior semi-inner products whose gauge counterparts appear in the end of \hyperref[chap:dual-characterizations]{Section~\ref*{chap:dual-characterizations}}, Chmieli{\'{n}}ski \cite{Chmielinski2005b} and Dragomir \cite[Chapters~8-11]{Dragomir2004} linked (relaxed) Birkhoff-type orthogonality notions to general semi-inner products.
Therefore, the following questions are natural: Can one nicely extend the Dragomir--Chmieli{\'{n}}ski definitions to generalized Minkowski spaces in order to obtain similar results? What are suitable substitutes for semi-inner products?

In the interplay between orthogonality types, metric projections onto linear subspaces, the radial projection onto the unit ball, and, of course, related characterizations of special classes of Banach spaces, also several constants and moduli which describe the geometry of the unterlying space take part. Notable examples are the James constant \cite{HaoWu2011,MartiniWu2015}, the Dunkl-Williams constant \cite{Mizuguchi2013}, the rectangular constant \cite{Joly1969,KapoorMa1981,Baronti1981,BenitezRi1977,GhoshSaPa2014}, and the Sch{\"{a}}ffer--Thele constant which also coincides with the bias and the metric projection bounds of Smith, Baronti, and Franchetti \cite{Desbiens1992}. (Note that the rectangular constant and the Sch{\"{a}}ffer--Thele constant are special values of the rectangular modulus introduced in \cite{Serb1999}.) To our best knowledge, such notions have not been investigated in generalized Minkowski spaces. 

In normed spaces, isosceles orthogonality is trivially a symmetric relation. This is not the case for all other gauges. In view of \hyperref[result:bisector-directional-convexity]{Lemma~\ref*{result:bisector-directional-convexity}}, the following question has to be answered separately: For given vectors $x,y\in\RR^d$, $x\neq 0$, in a generalized Minkowski space $(\RR^d,\gamma)$, is there a number $\alpha\in\RR$ such that $x\perp_I(\alpha x+y)$?\\[\baselineskip]
\textbf{Acknowledgments.} The author would like to thank Javier Alonso for pointing out related work and for his time and thoughts.

\providecommand{\bysame}{\leavevmode\hbox to3em{\hrulefill}\thinspace}

\end{document}